\newcommand\numberthis{\addtocounter{equation}{1}\tag{\theequation}}
\newcommand{\h}{\tilde{h}}
\newcommand{\n}{\hat{n}}
\newcommand{\talpha}{\tilde{\alpha}}
\newcommand{\tbeta}{\tilde{\beta}}
\newcommand{\tgamma}{\tilde{\gamma}}
\newcommand{\tdelta}{\tilde{\delta}}
\newcommand{\teta}{\tilde{\eta}}
\newcommand{\tzeta}{\tilde{\zeta}}
\DeclareMathOperator{\ric}{Ric}
\DeclareMathOperator{\vol}{vol}
\DeclareMathOperator{\tr}{tr}
\DeclareMathOperator{\grad}{grad}
\newtheorem{theorem}{Theorem}[section]
\newtheorem{lemma}[theorem]{Lemma}
\newtheorem{claim}{Claim}
\newtheorem{corollary}[theorem]{Corollary}
\newtheorem{remark}[theorem]{Remark}
\author[Gursky, McKeown, Tyrrell]{Matthew J. Gursky*, Stephen E. McKeown**, and Aaron J. Tyrrell\textsuperscript{\textdagger}}
\address{Department of Mathematics, University of Notre Dame, Notre Dame, IN 46556}
\address{Department of Mathematical Sciences, University of Texas at Dallas, 800 W. Campbell Road, Richardson, TX 75080}
\address{Department of Mathematics and Statistics, Texas Tech University, 1108 Memorial Circle, Lubbock, TX 79409}
\email{mgursky@nd.edu}
\email{stephen.mckeown@utdallas.edu}
\email{aatyrrel@ttu.edu}
\subjclass[2020]{Primary: 53C40, 81T35; Secondary: 53C18}
\title[Renormalized volume of regions]{Renormalized volume of minimally bounded regions in asymptotically hyperbolic Einstein spaces}
\thanks{*Supported in part by NSF grant DMS-2105460 and DMS-1547292. **Supported in part by Simons Foundation grant 966614. \textdagger Supported in part by the National University of Ireland.}
\begin{document}
\maketitle
\begin{abstract}We define a renormalized volume for a region in an asymptotically hyperbolic Einstein
manifold that is bounded by a Graham-Witten minimal surface and the conformal infinity. We prove
a Gauss-Bonnet theorem for the renormalized volume, and compute its derivative under variations
of the minimal hypersurface.
\end{abstract}
\section{Introduction}

The renormalized volume of an even-dimensional asymptotically hyperbolic Einstein (AHE) manifold $(X^{n + 1},g_+)$ is among its most important global invariants. Introduced in \cite{hs98} (see also \cite{g00}), 
it is defined by taking the
order-zero term in the expansion in $\varepsilon$ of the quantity $\vol_{g_+}(\left\{ r > \varepsilon \right\})$, where $r$ is a so-called geodesic defining function for the boundary at infinity, $M^n$. There are many such
defining functions, and the essential property of the renormalized volume $V_+$ is that it does not depend on which one is chosen. (This is generally not true if $X$ is odd-dimensional.)

One of the basic theorems regarding renormalized volume in dimension four is Anderson's Gauss-Bonnet theorem (\cite{and01}, see also \cite{cqy08}), which states that
\begin{equation}
	\label{agb}
	4\pi^2\chi(X^4) = 3V_+ + \frac{1}{8}\int_X|W_{g_+}|_{g_+}^2dv_{g_+}.
\end{equation}
Here $W_{g_+}$ is the Weyl tensor of $g_+$; since $|W_{g_+}|_{g_+}^2$ is a pointwise conformal invariant of weight $-4$, the integral is guaranteed to converge notwithstanding the infinite volume of $(X,g_+)$.
Anderson used (\ref{agb}) to compute the variation of $V_+$ with respect to changes in $g_+$.

In this paper, we establish analogous results for half of an AHE manifold that has been partitioned into two by a minimal surface. Specifically, suppose $(X^4,g_+)$ is an AHE manifold with conformal infinity
$(M^3,[\bar{h}])$, and suppose further that $Y^3 \subset X$ is a minimal hypersurface that intersects $M$ in a closed manifold $\Sigma^2 = M \cap Y$; we further assume that $Y$ divides $X$ into two
parts, $X^+$ and $X^-$, whose intersection is precisely $Y$ (the assignment of $+$ is arbitrary). Such a setting has been much studied in the literature on AHE manifolds, beginning with \cite{gw99}, which defined the
\emph{renormalized area} of $Y$ in analogy to the renormalized volume of $X$; it has also been and remains a setting of much interest in the physics literature, particularly in the context of the AdS/CFT correspondence.

We will be concerned, not with the renormalized area of $Y$, but with the renormalized \emph{volume} $V_+^+$ of $X^+$, which we may define as the constant term in the expansion
$\vol_{g_+}(\left\{ x \in X^+: r(x) > \varepsilon \right\})$, with $r$ a geodesic defining function. It is not immediately obvious that this quantity is independent of the choice of $r$:
the proof in the global case depends strongly on the product decomposition $[0,\delta)_r \times M$ of a collar neighborhood of $M$ in $X$, but generically there is no such decomposition of a collar neighborhood
of $M^+ = M \cap X^+$ in $X^+$. One could prove using rather more elaborate versions of the arguments of \cite{g00} that $V_+^+$ is invariant in this context, but our interest is in a Gauss-Bonnet formula,
and so we approach the result by a somewhat different path, as described below. 

We note that renormalized volume of regions in AH spaces divided in two by hypersurfaces was considered in \cite{gw19} using quite different techniques. The authors showed that a volume could be defined
in quite general circumstances -- in particular, not assuming the Einstein or minimality conditions -- but did not show that it is well-defined independent of all choices in the four-dimensional Einstein case.

Let $N \subset \mathring{X}$ be any hypersurface, and let $h = g_+|_{TN}$ be the 
induced metric on $N$.
Define an extrinsic curvature quantity $\mathcal{C}_N$ on $N$ by the formula
\begin{equation*}
	\mathcal{C}_N = \mathring{L}_N^{\alpha\beta}R_{\alpha\beta}^{g_+}
	- \mathring{L}^{\alpha\beta}_NR_{\alpha\beta}^{h} 
	+ \frac{1}{3}H_N|\mathring{L}_N|_{h}^2 -
	\frac{1}{3}\tr_{h}\mathring{L}_N^3.
\end{equation*}
Here $L_N$ is the second fundamental form of $N$ and $\mathring{L}_N$ its
tracefree part, while $H_N = h^{\alpha\beta}L_{\alpha\beta}$ is its mean curvature.
The curvature terms appearing are the Ricci tensors of the respective metrics, and
$\alpha, \beta$ are indices on $TN$. It is easy to show (and will be shown within) that
$\mathcal{C}_N$ is a pointwise conformal invariant of weight $-3$; indeed, in the notation of \cite{cq97i},
$\mathcal{C}_N = -\frac{1}{2}\mathcal{L}_4 - \frac{1}{3}\mathcal{L}_5$.

The first main result of this paper is the following.
\begin{theorem}
	\label{gbthm}
	Let $(X^4,g_+)$ be an asymptotically hyperbolic space satisfying the
	Einstein condition $\ric(g_+) = -3g_+$, with conformal infinity
	$(M^3,[\bar{h}])$. Let $Y^3$ be a complete minimal hypersurface dividing 
	$X$ into two pieces $X^+$ and $X^-$ such that $X^+ \cap X^- = Y$ and
	such that $Y \cap M = \Sigma^2 \neq \emptyset$. Let $r$ be a fixed
	geodesic defining function for $M$, and let $V_+^+$ be the constant
	term in the expansion
	\begin{equation*}
		\vol_{g_+}\left( \left\{ x \in X^+:r(x) > \varepsilon \right\} \right) = c_0\varepsilon^{-3} + c_2\varepsilon^{-1} + V_+^+ + o(1).
	\end{equation*}
	Let $\tilde{h} = g_+|_{TY}$.
	Then
	\begin{equation}
		\label{gbrv}
		\pi^2(4\chi(X^+) - \chi(\Sigma^2)) = 3V_+^+ + \frac{1}{8}\int_{\mathring{X}^+}|W_{g_+}|_{g_+}^2dv_{g_+} + \int_{\mathring{Y}}\mathcal{C}_{Y}dv_{\tilde{h}}.
	\end{equation}
\end{theorem}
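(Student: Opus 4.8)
The plan is to reduce the statement to Anderson's Gauss–Bonnet formula \eqref{agb} by a doubling/surgery argument, replacing the region $X^+$ bounded by the minimal hypersurface $Y$ with a genuine closed AHE-type object whose renormalized volume and Weyl integral can be related to those of $X^+$. First I would truncate at level $r = \varepsilon$ and work on the compact manifold-with-corners $X^+_\varepsilon = \{x \in X^+ : r(x) \geq \varepsilon\}$, whose boundary consists of the piece $M^+_\varepsilon = \{r = \varepsilon\} \cap X^+$ of a geodesic level set and the truncated minimal hypersurface $Y_\varepsilon = Y \cap \{r \geq \varepsilon\}$, meeting along the corner $\Sigma_\varepsilon = M^+_\varepsilon \cap Y_\varepsilon$. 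On such a manifold-with-corner one has the Chern–Gauss–Bonnet theorem with boundary and corner contributions: $8\pi^2 \chi(X^+_\varepsilon)$ equals the bulk Pfaffian integral, plus a boundary transgression term (the second fundamental form integrand) over $M^+_\varepsilon$ and over $Y_\varepsilon$, plus a corner term over $\Sigma_\varepsilon$ measuring the interior dihedral angle along $\Sigma_\varepsilon$. I would then let $\varepsilon \to 0$ and track the asymptotics of each term.

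The key computations, carried out term by term, are as follows. The bulk Pfaffian integral: using the Einstein condition $\ric(g_+) = -3g_+$, the Pfaffian four-form decomposes into a constant multiple of the volume form plus $|W_{g_+}|^2_{g_+}\,dv_{g_+}$ plus a divergence; integrating the constant term over $X^+_\varepsilon$ produces precisely the divergent expansion $c_0\varepsilon^{-3} + c_2\varepsilon^{-1} + 3V_+^+ + o(1)$ (up to the normalizing constant), exactly as in Anderson's argument, while $|W_{g_+}|^2$ is an $L^1$ conformal density so its integral over $X^+_\varepsilon$ converges to $\tfrac18\int_{\mathring X^+}|W_{g_+}|^2\,dv_{g_+}$ after accounting for constants. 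The transgression integral over $M^+_\varepsilon$: in the geodesic gauge $g_+ = r^{-2}(dr^2 + h_r)$ this is the standard renormalized-volume boundary term, which one shows converges to $0$ as $\varepsilon \to 0$ (this is the heart of why $V_+^+$ is gauge-independent — the divergent boundary contributions cancel the $r$-dependent ambient terms, as in \cite{and01}, \cite{cqy08}). The transgression integral over $Y_\varepsilon$: since $Y$ is minimal, $H_Y = 0$, so the boundary integrand reduces to an intrinsic/extrinsic curvature combination; rewriting the ambient curvature restricted to $TY$ via the Gauss equation and using the Einstein condition, this integrand should become exactly $-\mathcal{C}_Y\,dv_{\tilde h}$ up to the overall constant — and because $\mathcal{C}_Y$ is a conformal invariant of weight $-3$ (hence $\mathcal{C}_Y\,dv_{\tilde h}$ is conformally invariant and in particular has no divergence in $\varepsilon$), this integral converges to $-\int_{\mathring Y}\mathcal{C}_Y\,dv_{\tilde h}$. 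The corner term over $\Sigma_\varepsilon$: because $\{r = \varepsilon\}$ meets $Y$ orthogonally to leading order (minimal surfaces meet the conformal infinity orthogonally), the dihedral angle along $\Sigma_\varepsilon$ tends to $\pi/2$, and the corner integrand converges to a fixed constant times $\chi(\Sigma)$ by the Gauss–Bonnet theorem on the surface $\Sigma^2$; keeping careful track of the constants yields the $-\pi^2\chi(\Sigma^2)$ term.

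Assembling, $8\pi^2\chi(X^+_\varepsilon)$ — and $\chi(X^+_\varepsilon) = \chi(X^+)$ since the truncation is a deformation retract — equals the sum of these four limits, giving $8\pi^2\chi(X^+) = 24 V_+^+ + \int_{\mathring X^+}|W_{g_+}|^2\,dv_{g_+} + 8\int_{\mathring Y}\mathcal{C}_Y\,dv_{\tilde h} + (\text{const})\chi(\Sigma^2)$ with the constants arranged so that dividing by $8$ produces \eqref{gbrv}. The main obstacle I anticipate is the rigorous control of the two boundary transgression integrals as $\varepsilon \to 0$: one must expand the induced metric, second fundamental form, and ambient curvature of both $M^+_\varepsilon$ and $Y_\varepsilon$ in the geodesic gauge to sufficient order, verify that all negative powers of $\varepsilon$ cancel (for $M^+_\varepsilon$) and that the finite limit of the $Y_\varepsilon$ integrand is genuinely $-\mathcal{C}_Y\,dv_{\tilde h}$ and not $\mathcal{C}_Y$ plus extra terms — this requires the identity $\mathcal{C}_N = -\tfrac12\mathcal{L}_4 - \tfrac13\mathcal{L}_5$ together with careful bookkeeping of the Gauss and Codazzi equations. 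The corner analysis, while geometrically clear, also demands care in showing the angle convergence is fast enough that the corner integrand has a clean limit; this is where the precise asymptotics of how $Y$ approaches $\Sigma$ (established in \cite{gw99}-type expansions) are used.
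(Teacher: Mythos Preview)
Your approach is the paper's approach: truncate at $r=\varepsilon$, apply Gauss--Bonnet for manifolds with corners to $X_\varepsilon^+$, and analyze each piece as $\varepsilon\to 0$. The one technical difference is that the paper uses the Chang--Qing conformal decomposition of the Gauss--Bonnet integrand (boundary integrand $\mathcal{L}+T$, corner integrand $G+U$, with explicit transformation laws via operators $P_3^{\bar g}$ and $P_2^{\bar g}$). This lets them compute the $M_\varepsilon^+$ and $\Sigma_\varepsilon$ contributions in the \emph{compactified} metric $\bar g$, where the geometry is bounded, and then transport back to $g_+$ by applying $P_3$ and $P_2$ to $\omega=-\log r$. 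Without that decomposition you would be computing the raw transgression form directly in $g_+$, which is possible but considerably messier.

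There is, however, a genuine gap in your asymptotic claims. The $M_\varepsilon^+$ boundary integral does \emph{not} converge to zero: in the paper's computation it equals
\[
-\varepsilon^{-3}\vol_{\bar h}(M^+)+\varepsilon^{-1}\Bigl(\tfrac{3}{8}\textstyle\int_{M^+}R_{\bar h}\,dv_{\bar h}+\tfrac{1}{4}\oint_\Sigma\bar\eta_M\,dv_{\bar k}\Bigr)+o(1),
\]
and these divergences cancel against those of $3\vol_{g_+}(X_\varepsilon^+)$. The corner integral over $\Sigma_\varepsilon$ is also not purely $\pi^2\chi(\Sigma)$: it contributes an additional $\tfrac12\varepsilon^{-1}\oint_\Sigma\bar\eta_M\,dv_{\bar k}$. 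And the volume $\vol_{g_+}(X_\varepsilon^+)$ itself picks up a $\bar\eta_M$ correction at order $\varepsilon^{-1}$, because $Y$ bends away from the level set $\{w=0\}$ according to the Graham--Witten expansion $u(r,\zeta)=\tfrac14 r^2\bar\eta_M(\zeta)+O(r^4\log r)$, so near the corner $X_\varepsilon^+$ differs from $(\varepsilon,r_0)\times M^+$ by a sliver whose $g_+$-volume is of order $\varepsilon^{-1}$. The three $\bar\eta_M$ coefficients coming from (i) this corner correction to the volume, (ii) the analogous corner correction to the $M_\varepsilon^+$ integral, and (iii) the $\Sigma_\varepsilon$ integral are $-\tfrac34,\ \tfrac14,\ \tfrac12$ respectively, and it is their exact cancellation that makes the formula work. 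Your proposal does not track these contributions, and the statement that the corner ``dihedral angle tends to $\pi/2$ fast enough for a clean limit'' hides precisely this $\varepsilon^{-1}$ divergence.
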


One then immediately obtains

\begin{corollary}
	The renormalized volume $V_+^+$ is independent of the choice of geodesic defining function $r$, and it satisfies (\ref{gbrv}).
\end{corollary}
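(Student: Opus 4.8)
The plan is to apply the Gauss--Bonnet--Chern theorem for Riemannian $4$-manifolds with corners to the truncated region $X^+_\varepsilon := \{x \in X^+ : r(x) > \varepsilon\}$ and let $\varepsilon \to 0$. For small $\varepsilon$ this is a compact manifold with a codimension-two corner: its boundary is the union of the smooth face $M^+_\varepsilon := \overline{X^+} \cap \{r = \varepsilon\}$ near infinity and the minimal face $Y_\varepsilon := Y \cap \{r \ge \varepsilon\}$, meeting along $\Sigma_\varepsilon := Y \cap \{r = \varepsilon\}$, and $\chi(X^+_\varepsilon) = \chi(X^+)$. The Gauss--Bonnet--Chern identity then expresses $\chi(X^+)$ as the sum of (i) the integral over $X^+_\varepsilon$ of the Euler density $\mathcal{P}(g_+)$; (ii) the integrals over $M^+_\varepsilon$ and over $Y_\varepsilon$ of the boundary transgression $3$-form $\mathcal{T}$, built polynomially from the second fundamental form of the face and from $\riem(g_+)$ restricted to it; and (iii) the integral over $\Sigma_\varepsilon$ of a corner density $\mathcal{S}$ depending on the interior dihedral angle along $\Sigma_\varepsilon$ together with the curvature and extrinsic data there. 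One then studies the $\varepsilon \to 0$ asymptotics of each piece.

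For the bulk term, the Einstein condition $\ric(g_+) = -3g_+$ forces the tracefree Ricci tensor to vanish and the scalar curvature to equal $-12$, so in dimension four $\mathcal{P}(g_+) = \tfrac{1}{32\pi^2}\bigl(|W_{g_+}|_{g_+}^2 + 24\bigr)$. Hence the bulk contribution is $\tfrac{1}{32\pi^2}\int_{X^+_\varepsilon}|W_{g_+}|_{g_+}^2\, dv_{g_+} + \tfrac{3}{4\pi^2}\vol_{g_+}(X^+_\varepsilon)$; by the pointwise conformal invariance of $|W_{g_+}|_{g_+}^2\, dv_{g_+}$ the first summand converges to $\tfrac{1}{32\pi^2}\int_{\mathring{X}^+}|W_{g_+}|_{g_+}^2\, dv_{g_+}$, and the second equals $\tfrac{3}{4\pi^2}(c_0\varepsilon^{-3} + c_2\varepsilon^{-1} + V_+^+) + o(1)$ by definition of $V_+^+$. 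On $M^+_\varepsilon$ one uses the normal form $g_+ = r^{-2}(dr^2 + h_r)$ determined by the geodesic defining function $r$ to expand the geometry of $\{r = \varepsilon\}$ in powers of $\varepsilon$; as in Anderson's treatment of the closed case, $\int_{M^+_\varepsilon}\mathcal{T}$ has divergent part $-\tfrac{3}{4\pi^2}(c_0\varepsilon^{-3} + c_2\varepsilon^{-1})$, which exactly cancels the volume divergence, together with a finite remainder that --- because here one integrates over the proper subdomain $M^+ \subsetneq M$ rather than all of $M$, where the corresponding density is exact --- reduces by Stokes' theorem to a (possibly vanishing) integral over $\Sigma_\varepsilon$.

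On the minimal face one invokes $H_Y \equiv 0$, which annihilates every term of $\mathcal{T}$ containing the mean curvature, and rewrites the remaining combination of $\mathring{L}_Y$ and $\riem(g_+)|_{TY}$ by means of the Gauss and Codazzi equations and the Einstein condition; this identifies $\int_{Y_\varepsilon}\mathcal{T}$ with $\tfrac{1}{4\pi^2}\int_{Y_\varepsilon}\mathcal{C}_Y\, dv_{\h}$ modulo an exact form, whose integral is a divergent boundary term over $\Sigma_\varepsilon$. Meanwhile the dihedral angle along $\Sigma_\varepsilon$ is conformally invariant, hence equal to the angle measured in the compactification $r^2 g_+$; since a Graham--Witten minimal hypersurface meets the conformal infinity orthogonally --- $r$ restricts to a defining function for $\Sigma$ in $Y$, and $\grad_{r^2 g_+} r$ is $(r^2g_+)$-normal to $Y$ along $\Sigma$ --- this angle is $\tfrac{\pi}{2} + O(\varepsilon)$. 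Consequently $\int_{\Sigma_\varepsilon}\mathcal{S}$ splits into a divergent part, arising from the $O(\varepsilon)$ deviation of the angle and from the curvature blow-up at the corner, which exactly cancels the divergence produced by $Y_\varepsilon$, plus a finite part whose leading piece is $\tfrac{1}{8\pi}\int_\Sigma K_\Sigma\, dv = \tfrac14\chi(\Sigma^2)$. Assembling the four contributions --- using that $\mathcal{C}_Y$ is a conformal invariant of weight $-3$ and that $(\mathring{Y},\h)$ is asymptotically hyperbolic with conformal infinity $\Sigma$, so $\mathcal{C}_Y\, dv_{\h}$ extends continuously to $Y \cup \Sigma$ and $\int_{\mathring{Y}}\mathcal{C}_Y\, dv_{\h}$ converges --- and letting $\varepsilon \to 0$ yields
\begin{equation*}
	\chi(X^+) = \frac{1}{32\pi^2}\int_{\mathring{X}^+}|W_{g_+}|_{g_+}^2\, dv_{g_+} + \frac{3}{4\pi^2}V_+^+ + \frac{1}{4\pi^2}\int_{\mathring{Y}}\mathcal{C}_Y\, dv_{\h} + \frac14\chi(\Sigma^2),
\end{equation*}
which is (\ref{gbrv}) after multiplication by $4\pi^2$. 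The Corollary is then immediate: every term on the right-hand side of (\ref{gbrv}) other than $3V_+^+$ is manifestly independent of the choice of geodesic defining function $r$, so $V_+^+$ is too.

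The main obstacle is the analysis of the minimal face together with the corner bookkeeping: reorganizing the ambient transgression $\mathcal{T}$ restricted to $Y$ into the conformally invariant density $\mathcal{C}_Y$ --- where the Gauss--Codazzi identities, minimality, and $\ric(g_+)=-3g_+$ all enter --- and then tracking precisely the divergent and finite boundary contributions over $\Sigma_\varepsilon$ coming from $M^+_\varepsilon$, from $Y_\varepsilon$, and from the $O(\varepsilon)$ correction to the dihedral angle, so that the divergences cancel in pairs and the finite remainders assemble exactly into $\tfrac{1}{4\pi^2}\int_{\mathring{Y}}\mathcal{C}_Y\, dv_{\h} + \tfrac14\chi(\Sigma^2)$. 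Carrying this out requires the Graham--Witten asymptotic expansion of $Y$ near $\Sigma$, both to control the geometry of the corner and to justify the convergence statements above.
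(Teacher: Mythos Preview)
Your overall strategy---apply Gauss--Bonnet with corners to $X^+_\varepsilon$, let $\varepsilon\to 0$, and then read off well-definedness of $V_+^+$ from the resulting identity---is exactly the paper's, and your final sentence deducing the Corollary from (\ref{gbrv}) is precisely how the paper obtains it. The substantive difference is in the implementation: the paper does not use the classical Chern transgression directly, but the conformally rearranged Gauss--Bonnet of \cite{m20,cq97i}, writing $4\pi^2\chi(X_\varepsilon^+)$ in terms of $Q_{g_+}$ on the interior, the $T$ and $\mathcal{L}$ curvatures on each face, and the $U$ and $G$ curvatures on the corner. The point is that the $M_\varepsilon^+$ and $\Sigma_\varepsilon$ contributions can then be evaluated first with respect to the compactified metric $\bar g=r^2g_+$ (where they are bounded and have explicit $\varepsilon$-expansions) and then corrected by the conformally covariant operators $P_3^{\bar g}(-\log r)$ and $P_2^{\bar g}(-\log r)$. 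On the minimal face the Einstein and minimal conditions give $T_Y+\mathcal{L}_Y=\mathcal{C}_Y$ pointwise, with no exact remainder; the $Y_\varepsilon$ integral is simply convergent.

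Your divergence bookkeeping is misrouted and, as written, does not close. The $Y_\varepsilon$ integral produces no divergent boundary term, so there is nothing there for the corner to cancel. Conversely, the $M_\varepsilon^+$ integral does \emph{not} cancel the volume divergence on its own: because $Y$ is the graph $w=u(r,\zeta)=\tfrac14 r^2\bar\eta_M+O(r^4\log r)$ rather than $\{w=0\}$, both the volume of $X^+_\varepsilon$ and the integral over $M_\varepsilon^+$ acquire extra $\varepsilon^{-1}$ contributions proportional to $\oint_\Sigma\bar\eta_M\,dv_{\bar k}$, with \emph{different} coefficients ($-\tfrac34$ and $+\tfrac14$ in the paper's normalization). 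The residual $-\tfrac12\varepsilon^{-1}\oint_\Sigma\bar\eta_M$ is then cancelled by the divergent part of the corner term, which comes entirely from $P_2^{\bar g}(-\log r)$; the finite corner piece $\pi^2\chi(\Sigma)$ arises from $\overline U_{\Sigma_\varepsilon}=\tfrac{\pi}{2}K_{\bar k}+o(1)$ together with Gauss--Bonnet on $\Sigma$. So the cancellation is three-way among volume, $M_\varepsilon^+$, and corner, driven by the Graham--Witten expansion of $u$, not the pairwise cancellation you describe.
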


A natural question about the newly defined renormalized volume is how it changes if $Y$ is varied through minimal surfaces in $X$. The second main result of the paper is as follows.
\begin{theorem}
	\label{varthm}
	Let $X, M, Y, \Sigma, X^+, g_+$, $\bar{h}$, and $V_+^+$ be as in Theorem \ref{gbthm}. Suppose that $\mathcal{F}:(-\varepsilon,\varepsilon)_t \times Y \to X$ is a $C^3$ variation of $Y$ through
	minimal surfaces in $X$, so that $\mathcal{F}(t,\Sigma) \subset M$ for all $t$. Let $\widetilde{\mathcal{F}} = \mathcal{F}|_{(-\varepsilon,\varepsilon) \times \Sigma}$.
	Define $\tilde{f} \in C^{\infty}(\Sigma)$ by $\tilde{f} = \left\langle \left.\frac{d}{dt}\right|_{t = 0}\widetilde{\mathcal{F}}, \bar{\nu}_M\right\rangle$, where $\bar{\nu}_M$ is the inward-pointing
	normal vector to $\Sigma$ in $M^+$ with respect to $\bar{h}$. Define $f \in C^{\infty}(\mathring{Y})$ by $f = \left\langle \left.\frac{d}{dt}\right|_{t = 0}\mathcal{F},\mu_Y\right\rangle_{g_+}$,
	where $\mu_Y$ is the $(X^+,g_+)$-inward unit normal vector along $Y$.
	Let $r$ be a geodesic defining function near $M$. Then 
	\begin{equation*}
		\left.\frac{d}{dt}\right|_{t = 0}V_+^+ = \frac{1}{2} \oint_{\Sigma}\tilde{f} g^{(3)}(\bar{\nu}_M,\bar{\nu}_M) dv_{\bar{k}} +  \frac{1}{3} f.p. \int_{\mathring{Y}} f |\mathring{L}_Y|_{\tilde{h}}^2dv_{\tilde{h}},
	\end{equation*}
	where $\bar{k} = \bar{h}|_{T\Sigma}$, $\tilde{h} = g_+|_{TY}$, $g^{(3)}$ is the nonlocal term in the expansion in $r$ of $g_+$, $\n$ is the index corresponding to $\mu_Y$,
	and $f.p. \int_{\mathring{Y}} f |\mathring{L}_Y|_{\tilde{h}}^2dv_{\tilde{h}}$ denotes the zeroth-order part, in $\varepsilon$, of
	$\int_{Y \cap \left\{ r > \varepsilon \right\}}f|\mathring{L}_Y|_{\tilde{h}}^2dv_{\tilde{h}}$.
\end{theorem}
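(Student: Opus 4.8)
\medskip
\noindent\textit{Proof plan.} The plan is to reduce the variation of $V_+^+$ to a pure boundary quantity and then exploit the minimality of the $Y_t$. Since the Riemannian density $dv_{g_+}$ does not change under the variation while only the domain moves --- $X_t^+$ is bounded by $Y_t = \mathcal{F}(\{t\}\times Y)$ and, near infinity, by $M$, which stays fixed because $\mathcal{F}(t,\Sigma)\subset M$ --- a standard first-variation-of-domain computation gives, for every $\varepsilon>0$,
\begin{equation*}
	\frac{d}{dt}\Big|_{t=0}\vol_{g_+}\big(\{x\in X_t^+ : r(x)>\varepsilon\}\big) = -\int_{Y\cap\{r>\varepsilon\}} f\,dv_{\tilde h},
\end{equation*}
the minus sign arising because $\mu_Y$ points into $X^+$, and the face $\{r=\varepsilon\}\cap X_t^+$ contributing nothing since it never leaves the fixed hypersurface $\{r=\varepsilon\}$. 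For $|t|$ small the domain $X_t^+$ still satisfies the hypotheses of Theorem~\ref{gbthm} (with $Y$ replaced by $Y_t$ and $\Sigma$ by $\mathcal{F}(t,\Sigma)$), so the left-hand side has an expansion $c_0(t)\varepsilon^{-3}+c_2(t)\varepsilon^{-1}+V_+^+(t)+o(1)$ of the form recorded there. Differentiating this expansion in $t$ --- legitimate by the $C^3$ hypothesis on $\mathcal{F}$, which makes the expansion depend differentiably on $t$ with remainder still $o(1)$ after one $t$-derivative --- and comparing constant terms yields
\begin{equation*}
	\frac{d}{dt}\Big|_{t=0}V_+^+ = -\,\mathrm{f.p.}\!\int_{\mathring Y} f\,dv_{\tilde h},
\end{equation*}
where $\mathrm{f.p.}$ means the coefficient of $\varepsilon^0$ in the $\varepsilon$-expansion of $\int_{Y\cap\{r>\varepsilon\}} f\,dv_{\tilde h}$; comparison of the $\varepsilon^{-2}$ and any $\log\varepsilon$ coefficients produces auxiliary identities that turn out to hold automatically.

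The key structural input is that $Y_t$ is minimal for \emph{all} $t$, so the first variation of the mean curvature vanishes identically: $\Delta_{\tilde h} f + \big(|\mathring L_Y|_{\tilde h}^2 + \ric_{g_+}(\mu_Y,\mu_Y)\big)f = 0$ on $\mathring Y$ ($\Delta_{\tilde h}$ the Laplace--Beltrami operator; the tangential part of the variation does not affect $H$). Since $\ric_{g_+}(\mu_Y,\mu_Y)=-3$ by the Einstein condition, this gives $f = \tfrac13\Delta_{\tilde h} f + \tfrac13 f\,|\mathring L_Y|_{\tilde h}^2$. Integrating over $Y\cap\{r>\varepsilon\}$, applying the divergence theorem to the $\Delta_{\tilde h} f$ term, taking finite parts, and recalling the previous display, the theorem is reduced to the single identity
\begin{equation*}
	-\tfrac13\,\mathrm{f.p.}_{\varepsilon}\oint_{Y\cap\{r=\varepsilon\}}\!\!\langle \nabla_{\tilde h} f, \n\rangle_{\tilde h}\,dA_{\tilde h} \;=\; \tfrac12\oint_\Sigma \tilde f\, g^{(3)}(\bar\nu_M,\bar\nu_M)\,dv_{\bar k},
\end{equation*}
where $\n$ is the $\tilde h$-unit normal to $Y\cap\{r=\varepsilon\}$ inside $Y$ pointing toward smaller $r$; the leftover $-\tfrac13\,\mathrm{f.p.}\int_{\mathring Y} f\,|\mathring L_Y|_{\tilde h}^2\,dv_{\tilde h}$ is already the bulk term of the statement.

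To prove that identity one feeds the boundary integral the asymptotics near $\Sigma$: the Fefferman--Graham expansion $g_+ = r^{-2}(dr^2 + \bar h + r^2 h^{(2)} + r^3 g^{(3)} + \cdots)$; the expansion of the minimal $Y$ (which meets $M$ orthogonally) over $\Sigma$, of the form $y\mapsto y + r^2\sigma_2(y)+r^3\sigma_3(y)+\cdots$ in the collar coordinates; the normal form $\tilde h = \rho^{-2}(d\rho^2 + \bar k + \rho^2 k^{(2)} + \rho^3 k^{(3)} + \cdots)$ with $\rho\sim r|_Y$, hence the area element $dA_{\tilde h}$ of the slices; and the expansion $f = \rho^{-1}\tilde f + \rho f_1 + \rho^2 f_2 + \cdots$ of the Jacobi field, whose indicial roots on the asymptotically hyperbolic $(\mathring Y,\tilde h)$ are $-1$ and $3$, so that $f_1, f_2$ are determined recursively from $\tilde f$ and the ambient data, the $\rho^3$-coefficient is free, and --- crucially --- there is no $\rho^0$ term (which is what forced the absence of an $\varepsilon^{-2}$ divergence above). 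Substituting and extracting the coefficient of $\varepsilon^0$, every contribution built only from the recursively determined (``local'') data is shown to cancel, or to integrate to zero over $\Sigma$ after integration by parts; the surviving term is the $\tr_{\bar k} k^{(3)}$ occurring in the slice area element, and the trace-free condition $\tr_{\bar h} g^{(3)} = 0$ converts this tangential trace into $-g^{(3)}(\bar\nu_M,\bar\nu_M)$, producing the claimed $\tfrac12\oint_\Sigma \tilde f\, g^{(3)}(\bar\nu_M,\bar\nu_M)\,dv_{\bar k}$.

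The principal obstacle is precisely this last step: the asymptotic bookkeeping of the $\varepsilon^0$ term, which requires expressing $k^{(2)},k^{(3)},\sigma_2,\sigma_3$ in terms of $h^{(2)}$, $g^{(3)}$ and the second fundamental form of $\Sigma\subset M$ through the minimality of $Y$, verifying the cancellation of all ``local'' contributions, and checking that a possible $\rho^3\log\rho$ term in $f$ feeds only the $\log\varepsilon$ coefficients, which balance on their own. (An alternative is to differentiate the Gauss--Bonnet formula of Theorem~\ref{gbthm} directly: the topological terms are constant, the variation of $\frac18\int_{\mathring X^+}|W_{g_+}|_{g_+}^2\,dv_{g_+}$ is the elementary boundary flux $-\frac18\int_{\mathring Y} f\,|W_{g_+}|_{g_+}^2\,dv_{\tilde h}$, and one is left to compute the first variation of the conformally invariant functional $Y\mapsto\int_{\mathring Y}\mathcal C_Y\,dv_{\tilde h}$ --- i.e.\ the Euler--Lagrange operator of $\int\mathcal C_Y$ on the minimal Einstein locus --- after which the Weyl contributions cancel and the same asymptotic analysis near $\Sigma$ is needed; this seems to be no shorter.)
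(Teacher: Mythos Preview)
Your primary route---first vary the volume directly to get $\dot V_+^+=-\mathrm{f.p.}\int_{\mathring Y}f\,dv_{\tilde h}$, then use the Jacobi equation $3f=\Delta_{\tilde h}f+|\mathring L_Y|^2f$ and the divergence theorem to split off the bulk $|\mathring L_Y|^2f$ term and a single boundary flux $\mathrm{f.p.}\oint_{\Sigma_\varepsilon}\nu_{Y_\varepsilon}(f)\,dv_{k_\varepsilon}$---is correct and is \emph{not} the route the paper takes. The paper uses precisely your ``alternative'': it differentiates the Gauss--Bonnet formula~\eqref{gbrv}, so that the work becomes computing $\frac{d}{dt}\int_{Y^t}\mathcal C_{Y^t}\,dv_{\tilde h_t}$. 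That requires the variation of $L^{\alpha\beta}R^{\tilde h}_{\alpha\beta}$ and $\tr L^3$, Simons' identity, and repeated integrations by parts before the miraculous algebraic collapse $\tfrac12 W_{\alpha\beta\gamma\hat n}W^{\alpha\beta\gamma\hat n}+W_{\alpha\hat n\beta\hat n}W^{\alpha\hat n\beta\hat n}=\tfrac18|W|^2$ kills the Weyl bulk term. Your reduction bypasses all of that and reaches the equivalent endpoint $3\dot V_+^+=\mathrm{f.p.}\oint_{\Sigma_\varepsilon}\nu_{Y_\varepsilon}(f)\,dv_{k_\varepsilon}-\mathrm{f.p.}\int|\mathring L_Y|^2f\,dv_{\tilde h}$ in a few lines; the paper instead lands on a boundary integrand involving $W_{\alpha\hat n\beta\hat n}\nu^\alpha f^\beta$, from which the $g^{(3)}$ term is read off via $W^{\bar g}_{0\mu 0\nu}=-\tfrac32 r\,g^{(3)}_{\mu\nu}+O(r^2)$.

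The trade-off is that your last step---showing $\tfrac13\,\mathrm{f.p.}\oint_{\Sigma_\varepsilon}\nu_{Y_\varepsilon}(f)\,dv_{k_\varepsilon}=\tfrac12\oint_\Sigma\tilde f\,g^{(3)}(\bar\nu_M,\bar\nu_M)\,dv_{\bar k}$---is not a free lunch. The $\varepsilon^0$ coefficient picks up $f_2$ (the $r^2$-coefficient of $f$) and $\tr_{\bar k}g^{(3)}|_{T\Sigma}$ from the area element, but $f_2$ itself depends on $g^{(3)}$: solving the Jacobi equation to order $r^2$ uses the $r^3$-coefficient of $\tilde h$, which contains $g^{(3)}_{ab}$. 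So the claimed cancellation of ``local'' data and the extraction of the correct multiple of $g^{(3)}(\bar\nu_M,\bar\nu_M)$ require actually computing $f_1,f_2$ from the Jacobi recursion together with the induced-metric expansion $\bar{\tilde h}_{ab}=\bar k_{ab}+r^2(\cdots)+r^3 g^{(3)}_{ab}+O(r^4\log r)$, and then checking the coefficients. This is doable but is a genuine computation you have only outlined; in the paper's route the $g^{(3)}$ contribution is isolated in a single Weyl boundary term and the remaining boundary terms are shown to be $O(\varepsilon)$ or to cancel the divergence of $\int|L|^2f$ (Claim~\ref{lastclaim}). Either way one pays for the asymptotics; your approach front-loads far less algebra.
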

For more about the nonlocal term $g^{(3)}$, see (\ref{hbarexp}) and the surrounding discussion. We show in Lemma \ref{finlem} that the finite part of the integral over $Y$ can be written as the convergent integral of a rather more complicated expression.

The above theorem is stated for variations of $Y$ through minimal surfaces, whose existence in general we do not assert.  However, one can broaden the definition of $V_+^+$ to any dividing hypersurface by 
using (\ref{gbrv}). In that case, Theorem \ref{varthm} remains valid for any variation of $Y$ that preserves minimality to first order; see section \ref{varsec}, where we also explain why $C^3$-regularity of 
such a variation is in general optimal.  

In considering the existence problem for the variation of $Y$, the required boundary data would be the induced variation of $\Sigma$, so
another natural question is whether the derivative $\dot{V}_+^+$ only depends on the induced normal variation $\tilde{f}$.  For example, suppose there are two variations of $Y$ through minimal surfaces that induce 
the same variation of $\Sigma$; do the derivatives of $V_{+}^{+}$ with respect to these variations agree? The answer is yes, at least if $|\mathring{L}_Y|_{\h}^2 \leq 3$ everywhere; see Lemma \ref{fLemma}.

These theorems may be interpreted physically within the AdS/CFT correspondence of high-energy and condensed matter physics. To do so, we assume that $(M^3,[\bar{h}])$ is a spacelike slice
within a static four-dimensional conformal field theory $\Omega$; and that $(X^4,g_+)$ is an Einstein spacelike slice within a static asymptotically anti-de-Sitter Einstein five-dimensional spacetime $Z$
with conformal infinity $\Omega$.
The surface $\Sigma$ is then known as an entangling surface between
$M^+$ and $M^-$, and $Y$ is the so-called Ryu-Takayanagi surface extending $\Sigma$. According to the ``volume = complexity'' conjecture (\cite{s16,bc16,cfn17,abn18,jkkt20}), then, $V_+^+$ encodes the algorithmic complexity
of the quantum state of $M^+$. The above theorems can then be interpreted as giving formulae for this complexity and for its derivative as the entangling surface $\Sigma$ is varied continuously, so long
as $Y$ also varies continuously. (As demonstrated in \cite{bc16}, the latter will not always be the case.)

The assumption that $X$ and its five-dimensional ambient Lorentzian manifold $Z$ are both Einstein, of course, is rather restrictive. In general physical situations, one might expect that the
Ricci tensor of $X$ includes some extrinsic terms. But even if so, these would have well-defined asymptotics due to the asymptotically AdS condition on $Z$, and it would be straightforward, if tedious,
to carry out our calculation the same way in that context.

In section \ref{notsec}, we introduce our setting and notation. In section \ref{gbsec}, we prove Theorem \ref{gbthm}; and in section \ref{varsec}, we prove Theorem \ref{varthm}.

\section{Setting and Notation}
\label{notsec}

Recall that an asymptotically hyperbolic (AH) manifold is a compact manifold $X^{n + 1}$ with boundary $M^n$, equipped on the interior $\mathring{X}$ with a metric $g_+$ such that, for any defining function
$\varphi$ for $M$, the metric $\bar{g} = \varphi^2g_+$ extends to a Riemannian metric on $X = \overline{X}$; and such that, in addition,
$|d\varphi|_{\bar{g}} = 1$ along $M$.  The optimal regularity of $\bar{g}$ is in general a delicate question, but in the context of this paper (i.e., $X$ is four dimensional) by a result of Chru\'{s}ciel-
Delay-Lee-Skinner \cite{cdls05} we may assume that there is a compactification such that $\bar{g}$ is smooth up to the boundary.  The canonical example of an AH manifold is hyperbolic space itself, where $X$ is the unit ball $\mathbb{B}^{n + 1}$, and the metric is $g_H = \frac{4|dx|^2}{(1 - |x|^2)^2}$. Given an AH metric, the metric $\bar{h} = \bar{g}|_{TM}$ is a metric on $M$, but is not well defined since the choice of $\varphi$ is arbitrary. However, the conformal
class $[\bar{h}]$ is well defined, and is called the \emph{conformal infinity}.

A defining function $r$ for $M$ is called geodesic if $|dr|_{r^2g_+} = 1$ on a neighborhood of $M$. Such a function induces a diffeomorphism 
\begin{equation}
	\label{normformid}
	\psi:[0,\varepsilon)_r \times M \hookrightarrow X
\end{equation}
onto a neighborhood of $M$ in $X$ such that 
\begin{equation}
	\label{gnf}
	\psi^*g_+ = \frac{dr^2 + \bar{h}_r}{r^2},
\end{equation}
where $\bar{h}_r$ is a one-parameter family of metrics on $M$. A lemma of Graham-Lee (\cite{gl91}) states that geodesic defining functions are in one-to-one
correspondence with the representatives $\bar{h}$ of $[\bar{h}]$, according to the correspondence $\bar{h}_0 = \bar{h}$. The form (\ref{gnf}) is called the geodesic normal form corresponding to
$\bar{h} = \bar{h}_0$. We may assume that any geodesic compactification of $X$ is smooth (\cite{cdls05}).

An AH metric is called Einstein (or AHE) if it satisfies as well the condition $\ric(g) + ng = 0$. We will be concerned exclusively with four-dimensional AHE spaces, i.e. the case $n = 3$.
In this case, it is known (\cite{fg85,fg12,g00}) that in geodesic normal form, $\bar{h}_r$ has the expansion
\begin{equation}
	\label{hbarexp}
	\bar{h}_r = \bar{h} - r^2P^{\bar{h}} + r^3g^{(3)} + O(r^4),
\end{equation}
where $\tr_{\bar{h}}g^{(3)} = 0$ and where $P^{\bar{h}}$ is the Schouten tensor of $\bar{h}$, given by
\begin{equation}
	\label{scheq}
	P_{\mu\nu}^{\bar{h}} = R^{\bar{h}}_{\mu\nu} - \frac{1}{4}R_{\bar{h}}\bar{h}_{\mu\nu}.
\end{equation}
Apart from the trace condition, the tensor $g^{(3)}$ is not locally determined by the geometry of $(M^3,\bar{h})$.

The renormalized volume of $(X,g_+)$ is defined as follows (\cite{hs98,g00}). Choose a metric $\bar{h} \in [\bar{h}]$, and let $r$ be the corresponding geodesic defining function. Then the
set $\left\{ r > \varepsilon \right\}$ has volume
\begin{equation}
	\label{rnexpeq}
	\vol_{g_+}(\left\{ r > \varepsilon \right\}) = c_0\varepsilon^{-3} + c_2\varepsilon^{-1} + V_+ + o(1).
\end{equation}
The renormalized volume is $V_+$, and it is independent of the choice of $\bar{h}$ (that is, of $r$).

In our setting of interest, there exists as well an orientable minimal surface $Y^3 \subset X$, intersecting $M$ transversely in a closed two-manifold $\Sigma^2 = Y \cap M$, and dividing $X$ into two connected pieces
$X^+$ and $X^-$ such that $Y = X^+ \cap X^-$. 
We write $M^+ = X^+ \cap M$ and $M^- = X^- \cap M$, so that $\Sigma = M^+ \cap M^-$. The assignment of the signs $+$ and $-$ is arbitrary, and corresponds to a choice of unit normal vector field on $Y$.

We now introduce the notations we will use. We let $(X^4,M^3,g_+)$ be an AHE space, and $Y^3 \subset X$ a minimal surface as above. We will let $[\bar{h}]$ be the conformal infinity, and corresponding to the metric
$\bar{h}$ will be the geodesic defining function $r$. The compactified metric is $\bar{g} = r^2g_+$. 
Furthermore, $X^+, M^+$, and $\Sigma^2$ will be as above. For $\varepsilon > 0$, we let $X_{\varepsilon} = \left\{ r > \varepsilon \right\}$,
with $X_{\varepsilon}^+ = X^+ \cap X_{\varepsilon}$. We set $Y_{\varepsilon} = \overline{Y \cap X_{\varepsilon}}$ and $M_{\varepsilon} = \left\{ r = \varepsilon \right\}$. Similarly we set
$M_{\varepsilon}^+ = X^+ \cap M_{\varepsilon}$. Finally, $\Sigma_{\varepsilon} = Y \cap M_{\varepsilon}^+$.

Next, there are a number of metrics to name. We let $h_{\varepsilon} = g_+|_{TM_{\varepsilon}}$, while $\bar{h}_{\varepsilon} = \varepsilon^2h_{\varepsilon} = \bar{g}|_{TM_{\varepsilon}}$.
We let $\tilde{h} = g_+|_{TY}$, while $\bar{\tilde{h}} = r^2\tilde{h} = \bar{g}|_{TY}$. We let $\bar{k} = \bar{g}|_{T\Sigma}$, while $k_{\varepsilon} = g_+|_{T\Sigma_{\varepsilon}}$ and
$\bar{k}_{\varepsilon} = r^2k_{\varepsilon} = \varepsilon^2k_{\varepsilon}$. The decorations of $\varepsilon$ will sometimes change position as needed; for example, we will write
$h_{\mu\nu}^{\varepsilon}$, but $h_{\varepsilon}^{\mu\nu}$.

Now, near $\Sigma \subset M$, we can uniquely solve the eikonal equation and find $w \in C^{\infty}(M)$ such that $|dw|_{\bar{h}}^2 = 1$ near $\Sigma$, $w|_{\Sigma} = 0$, 
and $w \geq 0$ on $M^+$. The metric $\bar{h}$ then takes the form
$\bar{h} = dw^2 + \bar{k}_{w}$, with $\bar{k}_{w}$ a one-parameter family of metrics on $\Sigma$. Near any point $p \in \Sigma$, we can choose coordinates $x^1,x^2$ on a neighborhood of $p$ in
$\Sigma$; then by the flow of $\grad_{\bar{h}}w$ on $M^+$, the system $(x^1,x^2,x^3 = w)$ extends to a coordinate system on a neighborhood of $p$ in $M$. Finally, by the flow
of $\grad_{\bar{g}}r$, the system $(r = x^0,x^1,x^2,x^3 = w)$ extends to a coordinate system on a neighborhood of $p$ in $X$. Now, we will regard $Y$ as given by a function
\begin{equation}
	\label{wueq}
	w = u(r,x^1,x^2),
\end{equation}
where $u(0,x^1,x^2) \equiv 0$. This is the same convention as in \cite{gw99}. In fact, we may regard a neighborhood of $\Sigma$ in this way as a product
$[0,\varepsilon)_r \times \Sigma \times (-\varepsilon,\varepsilon)_{w}$; when using this product identification, we will use $\zeta$ to refer to a point of $\Sigma$, so that a generic point may be written $(r,\zeta,w)$.

When using index notation locally, we will let $0 \leq i,j \leq 3$ be indices on $TX$; $1 \leq \mu,\nu \leq 3$ be indices on $TM$; and $1 \leq a,b \leq 2$ be indices on $T\Sigma$. We also let
$0 \leq \alpha, \beta \leq 2$, which we will use when discussing $TY$.

Turning to extrinsic geometry, we let $\bar{\mu}_{M_{\varepsilon}}, \bar{\mu}_{Y}$ be the $X^+$-inward unit $\bar{g}$-normal to the given hypersurface; the unbarred versions will refer to
the unit normal with respect to $g_+$. We let $\bar{\nu}_{M_{\varepsilon}}$ be the $\bar{g}$-unit
normal to $\Sigma_{\varepsilon}$ that is directed into $M^+$, and $\bar{\nu}_{Y_{\varepsilon}}$, similarly, the $Y_{\varepsilon}$-inward $\bar{g}$-unit normal to $\Sigma_{\varepsilon}$. We let
$\overline{L}_{M_{\varepsilon}}, \overline{L}_{Y}$ be the second fundamental forms of the indicated hypersurfaces with respect to the inward unit normals $\bar{\mu}_{M_{\varepsilon}}$ and $\bar{\mu}_{Y}$, and computed
with respect to $\bar{g}$. Thus, for example,
\begin{equation*}
	\overline{L}_Y(A,B) = -\langle \nabla^{\bar{g}}_A\bar{\mu}_Y,B\rangle.
\end{equation*}
The tracefree parts are denoted $\mathring{\overline{L}}_{M_{\varepsilon}}$, etc. In all of these, we will sometimes write the hypersurface in the upper position, should it be convenient to do
so to place covariant indices; similarly, an unbarred $L$ will refer to the second fundamental form with respect to $g_+$ instead of $\bar{g}$. We let $\overline{H}_{M_{\varepsilon}} =
\bar{h}^{\mu\nu}_{\varepsilon}\overline{L}_{\mu\nu}^{M_{\varepsilon}}$ be the mean curvature of $M_{\varepsilon}$ with respect to $\bar{g}$ (or, if we omit the $\varepsilon$, that of $M$); similarly
for $\overline{H}_Y$, while $H_{M_{\varepsilon}}$ and $H_Y$ are the same quantities with respect to $g_+$ (recall we assume $H_Y \equiv 0$).  We let $\overline{II}_{Y_{\varepsilon}}$ be the second fundamental form of $\Sigma_{\varepsilon}$
viewed as a hypersurface of $Y_{\varepsilon}$ with respect to $\bar{\tilde{h}}$, while $\overline{II}_{M_{\varepsilon}}$ is the same for $\Sigma_{\varepsilon}$ viewed as a hypersurface in $M_{\varepsilon}$
with respect to $\bar{h}_{\varepsilon}$. The traces of these (i.e., the mean curvatures of $\Sigma_{\varepsilon}$ viewed as a hypersurface of the respective three-manifold) we denote $\bar{\eta}_{Y_{\varepsilon}},
\bar{\eta}_{M_{\varepsilon}}$. Again, the unbarred versions are with respect to the unbarred metrics $\tilde{h}$ and $h_{\varepsilon}$. We also let $\bar{\eta}_M$ be the mean curvature of $(\Sigma,\bar{k}) \subset
(M,\bar{h})$.

We define a smooth function $\theta_0^{\varepsilon} \in C^{\infty}(\Sigma_{\varepsilon})$ to be the angle, at each point, between $Y$ and $M_{\varepsilon}$; that is,
$\cos(\theta_0^{\varepsilon}) = -\langle\bar{\mu}_Y,\bar{\mu}_{M_{\varepsilon}}\rangle$. If the $\varepsilon$ is omitted, then it denotes the angle between $M$ and $Y$ at a point of $\Sigma$. Since
$\theta_0^{\varepsilon}$ is manifestly a conformal invariant, we do not distinguish between barred and unbarred versions.

Our curvature convention is such that the Ricci tensor is given by $R_{ij} = R^k{}_{ikj}$.

If $A$ is a vector or tensor field, we write $A = O_{\bar{g}}(\varphi)$, for $\varphi$ a function, whenever
$|A|_{\bar{g}} = O(\varphi)$.

\section{The Gauss-Bonnet Formula}
\label{gbsec}

We now prove Theorem \ref{gbthm}. We do so by using a form of the Gauss-Bonnet formula that has good conformal invariance properties, which allows us to compute using $\bar{g}$ instead of $g_+$.

\begin{proof}[Proof of Theorem \ref{gbthm}]
	Let $(X,M,g_+)$ be an AHE space with conformal infinity $[\bar{h}]$, and let $Y$ be as in the previous section. Let $\bar{h} \in [\bar{h}]$, and let $r$ be the corresponding geodesic defining function. Let
	$\varepsilon > 0$.
	Then $\overline{X_{\varepsilon}^+}$ is a four-manifold with codimension-two corner $\Sigma_{\varepsilon}$, and boundary hypersurfaces $M_{\varepsilon}^+$ and $Y_{\varepsilon}$ (see section \ref{notsec} for all notation).
	The Gauss-Bonnet theorem for Riemannian manifolds with corners (in this case $X_{\varepsilon}^+$), proven first in \cite{aw43} (and see \cite{c45}), can be rewritten in the following 
	conformally useful way (\cite{m20}, building on \cite{cq97i}).
	\begin{equation}
		\label{gba}
		\begin{split}
			4\pi^2\chi(X_{\varepsilon}^+) =& \int_{X_{\varepsilon}^+}\left( \frac{1}{8}|W_{g_+}|_{g_+}^2 + \frac{1}{2}Q_{g_+} \right)dv_{g_+}
			+ \int_{Y_{\varepsilon}}\left( \mathcal{L}_{Y} + T_Y \right)dv_{\tilde{h}}\\
			&+ \int_{M_{\varepsilon}^+}\left( \mathcal{L}_{M_{\varepsilon}} + T_{M_{\varepsilon}} \right)dv_h + \oint_{\Sigma_{\varepsilon}}\left( U_{\Sigma_{\varepsilon}} + G_{\Sigma_{\varepsilon}} \right)
			dv_{k_{\varepsilon}}.
		\end{split}
	\end{equation}
	Here, $W_{g_+}$ is the Weyl tensor of $g_+$, and the norm in question is its two-tensor norm $W^{ijkl}W_{ijkl}$. Meanwhile, $Q_{g_+}$ is the $Q$-curvature of $g_+$, defined for any metric $g$ by
	\begin{equation*}
		Q_g = -\frac{1}{6}\Delta_gR_g + \frac{1}{6}R_g^2 - \frac{1}{2}R^{ij}_gR_{ij}^g.
	\end{equation*}
	Here, the Laplacian is a negative operator and the curvatures are respectively the scalar and Ricci curvatures of $g$. For any metric $g$, the quantity $|W_{g}|_{g}^2dv_{g}$ is a pointwise conformal invariant
	of weight zero. Under a conformal transformation $\tilde{g} = e^{2\omega}g$, the $Q$ curvature transforms as
	\begin{equation*}
		e^{4\omega}Q_{\tilde{g}} = Q_g + P_4^g\omega,
	\end{equation*}
	where $P_4^g$ is the \emph{Paneitz operator} associated to $g$; we will not use the Paneitz operator and so omit it here.

	We give the definition of $\mathcal{L}_N$ and $T_N$, due to \cite{cq97i}, for an arbitrary boundary hypersurface $(N^3,h)$ embedded in a four-manifold endowed with metric $g$. The definition is
	\begin{equation}
		\label{Leq}
		\mathcal{L}_{N} = \mathring{L}_{N}^{\mu\nu}R_{\mu\nu}^{g} - 2\mathring{L}_{N}^{\mu\nu}R_{\mu\nu}^{h} + 
		\frac{2}{3}H_{N}|\mathring{L}_{N}|_{h}^2 - \tr_{h}\mathring{L}_{N}^3,
	\end{equation}
	where $L_N$ and $H_N$ are the second fundamental form and the mean curvature as before, and $\mu,\nu$ are indices on $TN$. Similarly, the $T$-curvature is defined by
	\begin{equation}
		\label{Teq}
		\begin{split}
			T_N =& -\frac{1}{12}\mu(R_g) - \mathring{L}_N^{\mu\nu}R_{\mu\nu}^g + \mathring{L}^{\mu\nu}_NR^h_{\mu\nu}-\frac{1}{2}H_N|\mathring{L}_N|_{h}^2 + \frac{2}{3}\tr_h\mathring{L}_N^3\\
			&+ \frac{1}{6}R_hH_N - \frac{1}{27}H^3_N - \frac{1}{3}\Delta_hH_N,
		\end{split}
	\end{equation}
	where $\mu$ is the inward-pointing unit normal to $N$.
	Under the conformal change $\tilde{g} = e^{2\omega}g$, this transforms according to the equation
	\begin{equation}
		\label{Transeq}
		e^{3\omega}\widetilde{T}_N = T_N + P_3^g\omega,
	\end{equation}
	where $P_3^g:C^{\infty}(X) \to C^{\infty}(N)$ is the conformally covariant boundary operator
	\begin{equation}
		\label{P3eq}
		\begin{split}
			P_3^gf =& \frac{1}{2}\mu\Delta_gf + \Delta_{h}\mu(f) - \frac{1}{3}H_N\Delta_hf + \mathring{L}_N^{\mu\nu}\nabla_{\mu}^h\nabla_{\nu}^hf + \frac{1}{3}H_N^{\mu}f_{\mu}\\
			&+ \left( \frac{1}{6}R_g - \frac{1}{2}R_h - \frac{1}{2}|\mathring{L}_N|_{h}^2 + \frac{1}{3}H_N^2 \right)\mu(f).
		\end{split}
	\end{equation}
 (We note that this formula differs from that in \cite{m20}; that paper and others in the
 literature contain misprints in the formula, which we have corrected by
 \cite{cq97i}.)

	Next we turn to the corner quantities. For a corner $(\Xi,k)$ that forms the intersection between two boundary hypersurfaces $N$ and $S$ making angle $\theta_0 \in C^{\infty}(\Xi)$, $G$ is defined by 
	\begin{equation}
		\label{Geq}
		G_{\Xi} = \frac{1}{2}\cot(\theta_0)(|\mathring{II}_N|_k^2 + |\mathring{II}_S|_k^2) - \csc(\theta_0)\mathring{II}_{ab}^N\mathring{II}_S^{ab},
	\end{equation}
	where $II$, etc., are as in section \ref{notsec}. The $G$ curvature is a pointwise conformal invariant of weight $-2$ (when the ambient metric on the four-manifold is changed conformally).
	Next, $U_{\Xi}$ is defined by
	\begin{equation}
		\label{Ueq}
		U_{\Xi} = (\pi - \theta_0)K_{\Xi} - \frac{1}{4}\cot(\theta_0)(\eta_N^2 + \eta_S^2) + \frac{1}{2}\csc(\theta_0)\eta_N\eta_S - \frac{1}{3}(\nu_NH_N + \nu_SH_S).
	\end{equation}
	Here, $K_{\Xi}$ is the Gaussian curvature of $\Xi$, and the other quantities are defined analogously to those in the previous section. Under a global conformal change $\tilde{g} = e^{2\omega}g$,
	$U$ transforms according to the equation
	\begin{equation}
		\label{Utranseq}
		e^{2\omega}\widetilde{U}_{\Xi} = U_{\Xi} + P_2^g\omega,
	\end{equation}
	where $P_2^g:C^{\infty}(X) \to C^{\infty}(\Xi)$ is the conformally covariant operator
	\begin{equation}
		\label{P2eq}
		\begin{split}
			P_2^gf =& (\theta_0 - \pi)\Delta_kf + \nu_N\mu_Nf + \nu_S\mu_Sf\\
			&+ \cot(\theta_0)(\eta_N\nu_Nf + \eta_S\nu_Sf) - \csc(\theta_0)(\eta_S\nu_Nf + \eta_N\nu_Sf)\\
			&+ \frac{1}{3}(H_N\nu_Nf + H_S\nu_Sf).
		\end{split}
	\end{equation}

	We now analyze formula (\ref{gba}) in the context of our space $(X_{\varepsilon}^+,g_+)$. Because $|W_{g_+}|_{g_+}^{2}dv_{g_+}$ is a pointwise conformal invariant of weight zero,
	its integral converges as $\varepsilon \to 0$ to $\int_{X^+} |W_{\bar{g}}|_{\bar{g}}^2dv_{\bar{g}}$, which in particular is finite.

	In our setting, $R_{ij}^{g_+} = -3g_{ij}^+$ and $R_{g_+} \equiv -12$, so $\Delta_{g_+}R_{g_+} \equiv 0$ and $Q_{g_+} \equiv 6$. The integral of $\frac{1}{2}Q_{g_+}$ therefore is simply the integral of $3$,
	so the second integral over $X_+$ becomes simply $3\vol_{g_+}(\left\{ r > \varepsilon \right\} \cap X^+)$, which is the same quantity considered in (\ref{rnexpeq}), except that the latter
	is over all of $X$ instead of $X^+$. To compute the contribution from this integral, we consider four different regions of $X$. First, let $r_0 > 0$ be small -- sufficiently small, in particular,
	that the geodesic normal form (\ref{gnf}) holds for $r < 2r_0$, and that the region $\mathcal{U} = \{r < 2r_0, -2r_0 < w < 2r_0\}$ has the decomposition $[0,2r_0) \times \Sigma \times (-2r_0,2r_0)$, with
	$|u(r,\zeta)| < \frac{1}{2}r_0$ on $\mathcal{U}$. Having chosen $r_0$, we will leave it fixed for all time.

	The first region of interest to us is then $A = \left\{ p \in X^+: r(p) \geq r_0 \right\}$. (This set does not depend on $\varepsilon$, which we assume is smaller than
	$r_0$.) Next, we want to capture the points near the boundary $M_{\varepsilon}^+$. The obvious set to consider is $B_{\varepsilon} = (\varepsilon,r_0) \times M^+$. The problem is that this may
	omit points that are contained in $X^+$ or include points contained in $X^-$, because $Y$ is given not by $w = 0$ but by $w = u(r,\zeta)$, where $u$ may be positive or negative away
from $\{0\}\times\Sigma$. To address this, we need to add the volume of the omitted points, $C_{\varepsilon}$, and subtract the volume of the over-included points $D_{\varepsilon}$, viz.,
	\begin{equation*} 
	X^{+}_{\epsilon} = (A \cup  B_{\epsilon} \cup C_{\epsilon} ) \setminus D_{\epsilon}.
	\end{equation*}

	To proceed, we analyze the volume form $dv_{g_+}$. First, at all points, we have $dv_{g_+} = r^{-4}dv_{\bar{g}}$. Near $M$, we can write
	\begin{equation*}
		dv_{\bar{g}} = dv_{\bar{h}_r}dr
	\end{equation*}
	using the normal-form identification (\ref{normformid}). Now in local coordinates $(r,x^1,x^2,x^3)$ near $M$, we may write
	\begin{equation*}
		dv_{\bar{h}_{r}} = \sqrt{\frac{\det(\bar{h}_r)}{\det(\bar{h})}}dv_{\bar{h}}.
	\end{equation*}
	As shown for example in \cite{g00}, we have the expansion
	\begin{equation*}
		\sqrt{\frac{\det(\bar{h}_r)}{\det(\bar{h})}} = 1 + v^{(2)}r^2 + v^{(4)}r^4 + O(r^5),
	\end{equation*}
	where $v^{(2)}, v^{(4)} \in C^{\infty}(M)$ are the so-called \emph{renormalized volume coefficients.} Either by direct computation using (\ref{hbarexp}) or by using
	equation (4.5) and the equation at the top of the same page of (\cite{g17}) (remembering that $M$ is totally geodesic with respect to $\bar{g}$ and that the singular Yamabe metric
	for $\bar{g}$ is $g_+$), we may show that $v^{(2)} = -\frac{1}{8}R_{\bar{h}}$. Thus,
	\begin{align*}
		dv_{g_+} &= r^{-4}\left( 1 - \frac{1}{8}r^2R_{\bar{h}} + O(r^4)\right)dv_{\bar{h}}dr\\
		&= \left( r^{-4} - \frac{1}{8}r^{-2}R_{\bar{h}} + O(1) \right)dv_{\bar{h}}dr.
	\end{align*}

	We next derive an expression for $dv_{\bar{g}}$ (and thus $dv_{g_+}$) near $\Sigma$. Since $\bar{h} = dw^2 + \bar{k}_{w}$ near $\Sigma$, we have
	\begin{align*}
		dv_{\bar{h}} &= \sqrt{\frac{\det(\bar{k}_w)}{\det(\bar{k})}}dv_{\bar{k}}dw\\
		&= (1 + O(w))dv_{\bar{k}}dw.
	\end{align*}
	Hence, near $\Sigma$, we have
	\begin{equation*}
		dv_{g_+} = \left( r^{-4} - \frac{1}{8}r^{-2}R_{\bar{h}} + O(1) \right)(1 + O(w))dv_{\bar{k}}dwdr.
	\end{equation*}

	We then have
	\begin{align*}
		\vol_{g_+}(X_{\varepsilon}^+) &= \vol_{g_+}(A) + \vol_{g_+}(B_{\varepsilon}) + \vol_{g_+}(C_{\varepsilon}) - \vol_{g_+}(D_{\varepsilon})\\
		&= \vol_{g_+}(A) + \int_{M^+}\int_{\varepsilon}^{r_0}\left( r^{-4} - \frac{1}{8}r^{-2}R_{\bar{h}} + O(1) \right)drdv_{\bar{h}}\\
		&\quad - \oint_{\Sigma}\int_{\varepsilon}^{r_0}\int_0^{u(r,\zeta)}\left( r^{-4} + O(r^{-2}) \right)(1 + O(w))dwdrdv_{\bar{k}}(\zeta)\numberthis\label{cornervolform},
	\end{align*}
	where the last integral represents $\vol_{g_+}(C_{\varepsilon}) - \vol_{g_+}(D_{\varepsilon})$.
	Now, by equations (2.13) and (2.14) in \cite{gw99},
	\begin{equation}
		\label{uexp}
		u(r,\zeta) = \frac{1}{4}r^2\overline{\eta}_M(\zeta) + r^{4}\log(r)v(\zeta) + O(r^4),
	\end{equation}
	where $\overline{\eta}_M$ is the mean curvature of $\Sigma$ viewed as a hypersurface of $(M,\bar{h})$ and $v \in C^{\infty}(\Sigma)$. Thus, we find
	\begin{align*}
		3\vol_{g_+}(X_{\varepsilon}^+) &= 3\vol_{g_+}(A) + 3\int_M\int_{\varepsilon}^{r_0}\left( r^{-4} - \frac{1}{8}r^{-2}R_{\bar{h}} + O(1) \right)dr dv_{\bar{h}}\\
		&\quad -3\oint_{\Sigma}\int_{\varepsilon}^{r_0}\left( \frac{1}{4}r^{-2}\overline{\eta}_M + v\log(r) + O(1) \right)drdv_{\bar{k}}\\
		&= \varepsilon^{-3}\vol_{\bar{h}}(M^+) - \varepsilon^{-1}\left(\frac{3}{8}\int_{M^+}R_{\bar{h}}dv_{\bar{h}} + \frac{3}{4}\oint_{\Sigma}\overline{\eta}_Mdv_{\bar{k}}\right)\\
		&\quad + 3V_+^+\numberthis\label{Xint} + o(1).
	\end{align*}
	Here $V_+^+$ is the collection of all the order-zero terms in $\varepsilon$ in the volume expansion, and is defined to be the renormalized volume; of course, we have not shown so far that $V_+^+$ is independent of the choice
	of $\bar{h} \in [\bar{h}]$ (or equivalently, of $r$).
	
	Since (as we saw above) $Q_{g_{+}} = 6$, the above right-hand side is thus the integral $\int_{X^+} \frac{1}{2}Q_{g_+}dv_{g_+}$. We next turn to the boundary integrals over $Y_{\varepsilon}$ and 
	$M_{\varepsilon}$, beginning with $Y_{\varepsilon}$. We will analyze $\mathcal{L}_Y$ and $T_Y$
	with respect to the metric $g_+$; of course, since $\mathcal{L}_Y$ is a pointwise conformal invariant, it is automatic that the integral of $\mathcal{L}_Y$ over $Y_{\varepsilon}$ will converge as
	$\varepsilon \to 0$. Now, because $g_+$ is Einstein and $Y$ is minimal in $(X,g_+)$, the first and third terms in (\ref{Leq}) vanish in this case. Thus, we get simply
	$\mathcal{L}_Y = -2\mathring{L}_Y^{\alpha\beta}R_{\alpha\beta}^{\tilde{h}} - \tr_{\tilde{h}}\mathring{L}_Y^3$.

	Next turning to $T_Y$, we again compute with respect to the ambient metric $g_+$, i.e., with respect to the non-compactified setting. Again, due to the Einstein condition of $g_+$ and the minimal condition
	on $Y$, the first, second, fourth, sixth, seventh, and eighth terms of (\ref{Teq}) vanish, so we get
	\begin{align*}
		T_Y &= \mathring{L}^{\alpha\beta}R_{\alpha\beta}^{\tilde{h}} + \frac{2}{3}\tr_{\tilde{h}}\mathring{L}_Y^3\\
		&= -\frac{1}{2}\mathcal{L}_Y + \frac{1}{6}\tr_{\tilde{h}}\mathring{L}_Y^3.
	\end{align*}
	Now, $\mathcal{L}_Y$ and $\tr_{\tilde{h}}\mathring{L}_Y^3$ are both pointwise conformal invariants of weight $-3$, so we have exhibited $T_Y$ itself as such a pointwise conformal invariant. We define
	\begin{equation*}
		\mathcal{C}_Y = \frac{1}{2}\mathcal{L}_Y + \frac{1}{6}\tr_{\tilde{h}}\mathring{L}_Y^3.
	\end{equation*}
	This is a pointwise conformal invariant, and the upshot of the above remarks is that 
	\begin{equation}
		\label{Yint}
		\int_{Y_{\varepsilon}} \left( \mathcal{L}_Y + T_Y \right)dv_{\tilde{h}} = \int_{Y_\varepsilon}\mathcal{C}_Ydv_{\tilde{h}} = \int_{\mathring{Y}}\mathcal{C}_Ydv_{\tilde{h}} + O(\varepsilon).
	\end{equation}

	We now turn to the integral over $M_{\varepsilon}^+$ in (\ref{gba}). Here, we will compute $\overline{T}_{M_{\varepsilon}}$ and $\overline{\mathcal{L}}_{M_{\varepsilon}}$, the extrinsic curvature quantities with respect to the
	\emph{compactified} metrics $\bar{g}$ and $\bar{h}_{\varepsilon}$; then we will compute the transformation to $g_+, h_{\varepsilon}$ using equation (\ref{Transeq}), which in particular implies that
	\begin{equation*}
		\int_{M_\varepsilon^+}(\mathcal{L}_M + T_M) dv_{g_+} = \int_{M_{\varepsilon}^+}(\overline{\mathcal{L}}_M + \overline{T}_M + P_3^{\bar{g}}(-\log r))dv_{\bar{g}}.
	\end{equation*}
	Our goal is thus to compute the right-hand side of this equation. We begin by computing some basic quantities. Recalling that $\bar{g} = dr^2 + \bar{h}_r$ and $M_{\varepsilon} = \left\{ r = \varepsilon \right\}$,
	we find that
	\begin{equation*}
		\overline{L}_{M_{\varepsilon}} = -\frac{1}{2}\partial_r\bar{h}_{r}|_{r = \varepsilon} = \varepsilon P^{\bar{h}} + O(\varepsilon^2),
	\end{equation*}
	where $P^{\bar{h}}$ is the Schouten tensor of $\bar{h}$, and we have used (\ref{hbarexp}). Thus,
	\begin{equation}
		\label{HMeq}
		\overline{H}_{M_{\varepsilon}} = \varepsilon (P_{\bar{h}})^{\mu}_{\mu} + O(\varepsilon^3) = \frac{1}{4}\varepsilon R_{\bar{h}} + O(\varepsilon^3).
	\end{equation}
	The reason the error is $O(\varepsilon^3)$ is that the $r^3$ term in the expansion of $\bar{h}_r$ is trace-free. We also have
	\begin{equation*}
		\mathring{\overline{L}}_{M_\varepsilon} = \varepsilon\mathring{P}^{\bar{h}} + O(\varepsilon^2).
	\end{equation*}
	We next wish to compute $R_{\bar{g}}$ on $M_{\varepsilon}$. To do this, we use the fact that $R_{g_+} \equiv -12$ and that $g_+ = r^{-2}\bar{g}$. Thus, we will use the conformal transformation formula for scalar curvature.
	Let $\omega = -\log(r)$. It will be useful to record that
	\begin{equation}
		\label{lapomegeq}
		\Delta_{\bar{g}}\omega = r^{-2} + \frac{1}{4}R_{\bar{h}} + O(r^2),
	\end{equation}
	which follows easily from (\ref{hbarexp}). Thus, from the conformal change formula, we find
	\begin{align*}
		-12 &= r^2(R_{\bar{g}} - 6\Delta_{\bar{g}}\omega - 6|d\omega|_{\bar{g}}^2)\\
		&= r^2\left(R_{\bar{g}} - 6r^{-2} - \frac{3}{2}R_{\bar{h}} - 6r^{-2} + O(r^2)\right),\\
		\intertext{whence}
		R_{\bar{g}} &= \frac{3}{2}R_{\bar{h}} + O(r^2).
	\end{align*}
	We next compute the tracefree tangential Ricci tensor $\mathring{R}_{\mu\nu}^{\bar{g}}$. We will use again the same technique of conformal transformation and the fact that $\ric(g_+) = -3g_+$. We first find
	using (\ref{hbarexp}) that
	\begin{equation*}
		\nabla_{\mu}^{\bar{g}}\nabla_{\nu}^{\bar{g}}\omega = P^{\bar{h}}_{\mu\nu} + O(r).
	\end{equation*}
	It then follows from the equation
	\begin{equation*}
		R_{\mu\nu}^{g_+} = R_{\mu\nu}^{\bar{g}} - 2\nabla_{\mu}^{\bar{g}}\nabla_{\nu}^{\bar{g}}\omega + 2\omega_{\mu}\omega_{\nu} - (\Delta_{\bar{g}}\omega - 2|d\omega|_{\bar{g}}^2)\bar{g}_{\mu\nu}
	\end{equation*}
	that
	\begin{equation*}
		\mathring{R}_{\mu\nu}^{\bar{g}} = 2\mathring{P}^{\bar{h}}_{\mu\nu} + O(r).
	\end{equation*}
	We are ready to analyze the curvature integrands on $M_{\varepsilon}$. First, we easily find using (\ref{Leq}) and the above that
	\begin{equation*}
		\overline{\mathcal{L}}_{M_{\varepsilon}} = O(\varepsilon),
	\end{equation*}
	where the first-order contribution is from the first two terms of (\ref{Leq}), and the last two terms provide contributions of order $O(\varepsilon^3)$. Next, we compute $\overline{T}_{M_{\varepsilon}}$, recalling that
	$\bar{\mu}_{M_{\varepsilon}} = \frac{\partial}{\partial r}$. Then it again follows from the above computations that
	\begin{equation*}
		\overline{T}_{M_{\varepsilon}} = O(\varepsilon).
	\end{equation*}
	The lowest-order contributions come once again from the first three terms of (\ref{Teq}), as well as the sixth.

	We next turn to computing $P_3^{\bar{g}}(\omega) = -P_3^{\bar{g}}(\log(r))$ for $P_3^{\bar{g}}$ associated to $M_{\varepsilon}$. First, observe that $\omega|_{M_{\varepsilon}} \equiv -\log(\varepsilon)$,
	and $\bar{\mu}_{M_{\varepsilon}}(\omega) \equiv \frac{1}{\varepsilon}$. Thus, all tangential derivatives of both quanties vanish, which means the second through fifth terms of (\ref{P3eq}) vanish. Thus,
	only the first and last remain. It follows from (\ref{lapomegeq}) that
	\begin{align*}
		\frac{1}{2}\bar{\mu}_{M_{\varepsilon}}\Delta_{\bar{g}}\omega = -\varepsilon^{-3} + O(\varepsilon).
	\end{align*}
	Next, using again the facts that $R_{\bar{g}} = \frac{3}{2}R_{\bar{h}} + O(r^2)$ and our above calculations, we find that the last term of (\ref{P3eq}) simplifies to
	\begin{equation*}
		\left( \frac{1}{6}R_{\bar{g}} - \frac{1}{2}R_{\bar{h}_{\varepsilon}} - \frac{1}{2}|\mathring{\overline{L}}_{M_{\varepsilon}}|_{\bar{h}_{\varepsilon}}^2 + \frac{1}{3}\overline{H}_{M_{\varepsilon}}^2 \right)
		\bar{\mu}(-\log(r))
		= \frac{1}{4}\varepsilon^{-1} + O(\varepsilon).
	\end{equation*}
	
	Now, we wish to perform the integral over $M_{\varepsilon}^+$, not $M_{\varepsilon}$. Just as for the interior integral, the simplest approach will be first to compute the integral over
	$\left\{ \varepsilon \right\} \times M^+$, and then subtract or add whatever was missed near the corner due to turning of $Y$ away from $\Sigma$. First, we observe that from our above computations,
	it is clear that
	\begin{equation*}
		\int_{M_{\varepsilon}^+}(\overline{T}_{M_{\varepsilon}} + \overline{\mathcal{L}}_{M_{\varepsilon}} + P_3^{\bar{g}}(-\log(r)))dv_{\bar{h}_{\varepsilon}} =
		\int_{M_{\varepsilon}^+}P_3^{\bar{g}}(-\log(r))dv_{\bar{h}_{\varepsilon}} + O(\varepsilon).
	\end{equation*}
	We may focus therefore only on contributions from $P_3^{\bar{g}}(-\log(r))$. We write
	\begin{align*}
		\int_{M_{\varepsilon}^+}P_3^{\bar{g}}(\omega)dv_{\bar{h}_{\varepsilon}} &= \int_{\{\varepsilon\} \times M^+}P_3^{\bar{g}}(\omega)dv_{\bar{h}_{\varepsilon}}\\
		&\quad- \oint_{\Sigma}\int_0^{u(\varepsilon,\zeta)}
		P_3^{\bar{g}}(\omega)(1 + O(w))dwdv_{\bar{k}}(\zeta).
	\end{align*}
	(Compare (\ref{cornervolform}).)
	We compute the first term first. Recall that $dv_{\bar{h}_{\varepsilon}} = (1 - \frac{1}{8}\varepsilon^2 R_{\bar{h}} + O(\varepsilon^4))dv_{\bar{h}}$. Thus,
	\begin{align*}
		\int_{\{\varepsilon\} \times M^+} P_3^{\bar{g}}(\omega) &= \int_{M^+} \left(-\varepsilon^{-3} + \frac{1}{4}\varepsilon^{-1}R_{\bar{h}} + O(\varepsilon)\right)
		\left(1 - \frac{1}{8}\varepsilon^2R_{\bar{h}} + O(\varepsilon^4)\right)dv_{\bar{h}}\\
		&= -\varepsilon^{-3}\vol_{\bar{h}}(M^+) + \frac{3}{8}\varepsilon^{-1}\int_{M^+}R_{\bar{h}}dv_{\bar{h}} + O(\varepsilon).
	\end{align*}
	As for the corner integral, we find using (\ref{uexp})
	\begin{align*}
		\oint_{\Sigma}\int_0^{u(\varepsilon,\zeta)}P_3^{\bar{g}}(\omega)(1 + O(w))dwdv_{\bar{k}}(\zeta) &= \oint_{\Sigma}\left( -\varepsilon^{-3} + O(\varepsilon^{-1}) \right)\cdot\\
			&\quad\cdot\left( \frac{1}{4}\varepsilon^2\overline{\eta}_M + O(\varepsilon^4\log(\varepsilon)) \right)dv_{\bar{k}}\\
			&= -\frac{1}{4}\varepsilon^{-1}\oint_{\Sigma}\overline{\eta}_Mdv_{\bar{k}} + O(\varepsilon\log\varepsilon).
	\end{align*}
	Thus, we have found that
	\begin{equation}
		\label{Mint}
		\begin{split}
		\int_{M_{\varepsilon}^+}(T_{M} + \mathcal{L}_M)dv_{g_+} =& -\varepsilon^{-3}\vol_{\bar{h}}(M^+)\\
		&+ \varepsilon^{-1}\left( \frac{3}{8}\int_{M^+}R_{\bar{h}}dv_{\bar{h}} + \frac{1}{4}\oint_{\Sigma}
		\overline{\eta}_Mdv_{\bar{k}}\right) + o(1).
		\end{split}
	\end{equation}

	We are finally ready to evaluate the corner terms $U_{\Sigma_{\varepsilon}}$ and $G_{\Sigma_{\varepsilon}}$ in (\ref{gba}). Just as for $M_{\varepsilon}^+$, our strategy will be to evaluate first with respect to
	$\bar{g}$, and then use the conformal transformation formula (\ref{Utranseq}) and the pointwise conformal invariance of $G$. Thus, we will find
	\begin{equation*}
		\oint_{\Sigma_{\varepsilon}}(G_k + U_k)dv_k = \oint_{\Sigma_{\varepsilon}} (\overline{G}_{\Sigma_{\varepsilon}} + \overline{U}_{\Sigma_{\varepsilon}} + P_2^{\bar{g}}(-\log r))dv_{\bar{k}_{\varepsilon}}.
	\end{equation*}
	
	To begin, we wish to estimate $\theta_0^{\varepsilon}$, which enters the formulas for $U, G$, and $P_2$. To do this, we find normal vectors $\bar{\mu}_{M_{\varepsilon}}$ and
	$\bar{\mu}_{Y}$. The first is easy: $\bar{\mu}_{M_{\varepsilon}} = \frac{\partial}{\partial r}$. For the second, we observe that, for $\varepsilon$ small, we can write $Y$ as the zero level set
	of $F = w - u(r,\zeta)$ (where, again, $\zeta \in \Sigma$). Now,
	\begin{align*}
		\grad_{\bar{g}}F &= (1 + O(r^2))\frac{\partial}{\partial w} - \frac{\partial u}{\partial r}\frac{\partial}{\partial r} - \bar{k}^{ab}\frac{\partial u}{\partial x^a}\frac{\partial}{\partial x^b} 
		+ O^i(r^3\log (r))\partial_i\\
		&= (1 + O(r^2))\frac{\partial}{\partial w} - \frac{1}{2}r\overline{\eta}_{M}\frac{\partial}{\partial r} - \frac{1}{4}r^2\bar{k}^{ab}\frac{\partial \overline{\eta}_M}{\partial x^a }\frac{\partial}{\partial x^b} + 
		O_{\bar{g}}(r^3\log(r)).
	\end{align*}
	Since $\left|\frac{\partial}{\partial w}\right|_{\bar{g}} = 1 + O(r^2)$, we have
	\begin{equation*}
		|\grad_{\bar{g}}F|_{\bar{g}} = 1 + O(r^2).
	\end{equation*}
	Consequently,
	\begin{equation}
		\label{bmueq}
		\bar{\mu}_Y = \frac{\grad_{\bar{g}}F}{|\grad_{\bar{g}}F|_{\bar{g}}} = (1 + O(r^2))\frac{\partial}{\partial w} - \left( \frac{1}{2}r\overline{\eta}_{M} + O(r^3\log(r)) \right)\frac{\partial}{\partial r} +
		O_{\bar{g}}(r^2).
	\end{equation}
	Thus,
	\begin{equation*}
		\cos(\theta_0^{\varepsilon}) = -\langle \bar{\mu}_{M_{\varepsilon}},\bar{\mu}_Y\rangle = \frac{1}{2}\varepsilon\overline{\eta}_M + O(\varepsilon^3\log(\varepsilon)).
	\end{equation*}
	Next we wish to estimate the second fundamental form $\overline{II}_{Y_{\varepsilon}}$ of $\Sigma_{\varepsilon}$ viewed as a submanifold of $Y_{\varepsilon}$. To do this,
	we first want to know the inward-pointing unit normal vector $\bar{\nu}_{Y_{\varepsilon}}$ to $\Sigma_{\varepsilon}$ in $Y_{\varepsilon}$. By inspection, we can see that
	\begin{equation*}
		V = \frac{\partial}{\partial r} - \frac{\partial F}{\partial r}\frac{\grad_{\bar{g}}F}{|dF|_{\bar{g}}^2}
	\end{equation*}
	is normal to $\Sigma_{\varepsilon}$ and tangent to $Y_{\varepsilon}$, so 
	\begin{equation}
		\label{bnueq}
		\bar{\nu}_{Y_{\varepsilon}} = \frac{V}{|V|_{\bar{g}}} = (1 + O(\varepsilon^2))\frac{\partial}{\partial r} + \frac{1}{2}\varepsilon \overline{\eta}_{M}\frac{\partial}{\partial w} + O(\varepsilon^3\log \varepsilon).
	\end{equation}
	Now, a local frame for $T\Sigma_{\varepsilon}$ is given by $\left\{ X_1,X_2\right\}$, where 
	\begin{equation*}
		X_a = \frac{\partial}{\partial x^a} - \frac{\partial F}{\partial x^a}\frac{\partial}{\partial w}.
	\end{equation*}
	Since $\nabla_{\partial_a}^{\bar{g}}\partial_r = O^i(\varepsilon)\partial_i$ (which is easy to check), we may conclude that $\langle\nabla_{X_a}^{\bar{g}}\bar{\nu}_{Y_{\varepsilon}},X_b\rangle_{\bar{g}} = O(\varepsilon)$.
	Thus, by Weingarten's equation,
	\begin{equation*}
		|\overline{II}_{Y_{\varepsilon}}|_{\bar{g}} = O(\varepsilon).
	\end{equation*}
	
	It now follows that $\overline{G}_{\Sigma_{\varepsilon}} = O(\varepsilon)$: the first term in (\ref{Geq}) because $\cot(\theta_0^{\varepsilon}) = O(\varepsilon)$, and the second because of the estimate
	on $\overline{II}_{Y_{\varepsilon}}$.

	We next turn to $\overline{U}_{\Sigma_{\varepsilon}}$. The second and third terms in (\ref{Ueq}) are $O(\varepsilon)$ for the same reason. Turning to the fourth term, 
	$\bar{\nu}_M\overline{H}_{M_{\varepsilon}} = O(\varepsilon)$ by (\ref{HMeq}). To compute $\bar{\nu}_{Y_{\varepsilon}}\bar{H}_Y$, we first compute $\overline{H}_Y$ using the conformal change
	formula. Recall that $H_Y \equiv 0$. Then again taking $\omega = -\log r$, we find from the conformal transformation formula
	$H_Y = e^{-\omega}(\overline{H}_Y - 3\bar{\mu}_Y(\omega))$ that
	\begin{align*}
		0 &= r(\overline{H}_Y - \frac{3}{2}\overline{\eta}_M + O(r^2\log(r))),\\
		\intertext{whence}
		\overline{H}_Y &= \frac{3}{2}\overline{\eta}_M + O(r^2\log(r)).
	\end{align*}
	Thus, $\bar{\nu}_{Y_{\varepsilon}}\overline{H}_Y = O(\varepsilon\log(\varepsilon))$; so since $\theta_0^{\varepsilon} = \frac{\pi}{2} + O(\varepsilon)$, we have
	\begin{equation*}
		\overline{U}_{\Sigma_{\varepsilon}} = \frac{\pi}{2}K_{\bar{k}} + O(\varepsilon\log\varepsilon).
	\end{equation*}
	Consequently,
	\begin{equation*}
		\oint_{\Sigma_{\varepsilon}}(\overline{G}_{\Sigma_{\varepsilon}} + \overline{U}_{\Sigma_{\varepsilon}})dv_{\bar{k}_{\varepsilon}} = \pi^2\chi(\Sigma) + O(\varepsilon\log\varepsilon).
	\end{equation*}
	We still need to compute the integral of $P_2^{\bar{g}}(-\log r)$. First, still letting $\omega = -\log r$, observe that $\omega|_{M_{\varepsilon}} \equiv -\log \varepsilon$ and that
	$\bar{\mu}_M\omega \equiv -\frac{1}{\varepsilon}$. Thus, the first and second terms of (\ref{P2eq}) in $P_2^{\bar{g}}(\omega)$ vanish identically, as do the terms $\overline{\eta}_{M_{\varepsilon}}
	\bar{\nu}_{M_{\varepsilon}}\omega$, $\overline{\eta}_{Y_{\varepsilon}}\bar{\nu}_{M_{\varepsilon}}\omega$, and $\overline{H}_{M_{\varepsilon}}\bar{\nu}_{M_{\varepsilon}}\omega$.

	Now, the third term takes the form
	\begin{align*}
		\bar{\nu}_{Y_{\varepsilon}}\bar{\mu}_Y\omega &= \bar{\nu}_{Y_{\varepsilon}}\left( \frac{1}{2}\overline{\eta}_M + O(r^2\log(r)) \right)\\
		&= \frac{1}{4}\varepsilon\overline{\eta}_M\partial_w\overline{\eta}_M + O(\varepsilon\log\varepsilon)\\
		&= O(\varepsilon\log\varepsilon).
	\end{align*}

	Next, $\bar{\nu}_{Y_{\varepsilon}}\omega = -\frac{1}{\varepsilon} + O(\varepsilon)$, so $\cot(\theta_0^{\varepsilon})\overline{\eta}_{Y_{\varepsilon}}\bar{\nu}_{Y_{\varepsilon}}\omega = O(\varepsilon)$.
	On the other hand, $-\csc(\theta_0^{\varepsilon})\overline{\eta}_{M_{\varepsilon}}\bar{\nu}_{Y_{\varepsilon}}\omega = \varepsilon^{-1}\overline{\eta}_{M} + O(\varepsilon)$, since
	$\overline{\eta}_{M_{\varepsilon}} = \overline{\eta}_M + O(\varepsilon^2)$ and $\csc(\theta_0^{\varepsilon}) = 1 + O(\varepsilon^2)$.

	Finally,
	\begin{equation*}
		\frac{1}{3}\overline{H}_Y\bar{\nu}_{Y_{\varepsilon}}\omega = -\frac{1}{2}\varepsilon^{-1}\overline{\eta}_M + O(\varepsilon\log\varepsilon).
	\end{equation*}
	Adding together all these terms, we therefore find that $P_2^{\bar{g}}\omega = \frac{1}{2}\varepsilon^{-1}\overline{\eta}_M + O(\varepsilon\log\varepsilon)$. Thus,
	\begin{equation}
		\label{Sigint}
		\oint_{\Sigma_{\varepsilon}}\left( \overline{G}_{\varepsilon} + \overline{U}_{\varepsilon} + P_2^{\bar{g}}(-\log r) \right)dv_{\bar{k}_{\varepsilon}} = \frac{1}{2}\varepsilon^{-1}\oint_{\Sigma}
		\overline{\eta}_Mdv_{\bar{k}} + \pi^2\chi(\Sigma) + O(\varepsilon\log\varepsilon).
	\end{equation}

	Combining (\ref{gba}), (\ref{Xint}), (\ref{Yint}), (\ref{Mint}), and (\ref{Sigint}), we find
	\begin{equation*}
		\pi^2(4\chi(X_{\varepsilon}^+) - \chi(\Sigma)) = 3V_+^+ + \frac{1}{8}\int_{X^+_{\varepsilon}}|W_{g_+}|_{g_+}^2 dv_{g_+} + \int_{Y_{\varepsilon}} \mathcal{C}_Y dv_{\tilde{h}} + O(\varepsilon \log \varepsilon).
	\end{equation*}
	Letting $\varepsilon \to 0$ yields the result.
\end{proof}

\section{Variation of Renormalized Volume}
\label{varsec}

In this section we give a proof of Theorem \ref{varthm}.   Since this will require extensive calculations we begin by establishing some new notational conventions. 

In addition to using the coordinate system
$(r,x^1,x^2,w)$, it will be convenient to use the system $(x^{\tilde{0}},x^{\tilde{1}},x^{\tilde{2}},x^{\tilde{3}}) = (r,x^1,x^2,w - u)$, where $u$ is as in (\ref{wueq}).
We will still use $0 \leq i,j \leq 3$ to refer to coordinate fields on $X$, but will use $0 \leq \talpha,\tbeta \leq 2$ to refer to the coordinate fields tangent to $Y$. It will also be
useful on the interior $\mathring{X}$ to let $x^{\hat{n}}$ be the $g_+$-distance to $\mathring{Y}$, so that $\frac{\partial}{\partial x^{\hat{n}}} = \mu_Y$ is the $g_+$-unit inward normal vector to $\mathring{Y}$.
The system $(r,x^1,x^2,x^{\hat{n}})$ is clearly another coordinate system near $\mathring{Y}$, and the corresponding coordinate vector fields tangent to $Y$ are the same.

As in the introduction, suppose $\mathcal{F}:(-\varepsilon,\varepsilon)_t \times Y \to X$ is a $C^3$ variation of $Y$ through minimal surfaces in $X$ such that $\mathcal{F}(t,\Sigma) \subset M$ for all $t$.  For each $t \in (-\varepsilon, \varepsilon)$, $\mathcal{F}_t(Y)=Y^t$ splits $X$ into two disjoint sets, $X^+_t,$ $X^-_t$ and we can make our choice of $X^+_t$ consistent by fixing a point $p\in X_{0}^+$ and requiring that $p\in X^+_t$ for $t$ in a possibly smaller time interval $t\in (-\delta, \delta).$  Let $V^+_+(t)=V^+_+(X^+_t)$. We will also use the notation $V_+^+(\mathcal{F}_t(Y))$. 
Our goal is to use the formula (\ref{gbrv}) to compute a formula for the first variation, $\dot{V}^+_+.$ 

Before proceeding we recall that strictly speaking, the formula for $V_{+}^{+}$ given by (\ref{gbrv}) only holds for minimal $Y$.  However, as we remarked in the introduction, one can use this formula to define $V_{+}^{+}$ for any dividing hypersurface, in particular for $Y^t = \mathcal{F}_t(Y)$, where $\mathcal{F}_t$ is a general variation of $Y$. 

We begin by making two simplifying assumptions about the variation $\mathcal{F}$.  First, we show that it suffices to consider normal variations of $Y$.  We then weaken the assumption that $Y^t = \mathcal{F}_t(Y)$ is minimal for each $t$, and only assume that minimality is preserved infinitesimally. The latter assumption will suffice to establish the theorem.

To see why it suffices to consider normal variations, let 
$Z = \left.\frac{d}{dt}\mathcal{F}_t\right|_{t = 0}$ be the variation field of $\mathcal{F}$. Write
$Z = Z^{\bot} + Z^{\top}$, with the two uniquely defined fields respectively normal and tangential to $Y$. Now, because $\mathcal{F}_t(\Sigma) \subset M$ for all $t$, along $\Sigma$ we have
$Z^{\top} \in TY \cap TM$, and it follows
that $Z^{\top}$ is tangential to $\Sigma$ along the boundary. Thus, by Theorem 9.34 of Lee and the fact that $\overline{Y}$ is compact, there exists a unique global flow
$\mathcal{G}: \mathbb{R} \times Y \to Y$ such that $\left.\frac{d}{dt}\mathcal{G}\right|_{t = 0} = -Z^{\top}$. Define $\widehat{\mathcal{F}}:(-\varepsilon,\varepsilon) \times Y \to X$
by $\widehat{\mathcal{F}}(t,y) = \mathcal{F}(t,\mathcal{G}(t,y))$. By the chain rule, $\left.\frac{d}{dt}\widehat{\mathcal{F}}_t\right|_{t = 0} = Z^{\bot}$.
On the other hand, $\widehat{\mathcal{F}}_t(Y) = \mathcal{F}_t(Y)$ for all $t$, so it remains a flow through minimal surfaces, and the renormalized volume at each time $t$ is identical.
Thus, it suffices to compute the variation for (initially) normal variation fields, i.e., those
satisfying
\[\left.\frac{d}{dt}\mathcal{F}_t\right|_{t=0} \bot TY.\]

As mentioned, we will also assume
\begin{equation} \label{dHdt} 
\left.\frac{d}{dt} H_{Y^t}\right|_{t=0} = 0, 
\end{equation} 
where $H_{Y^t}$ is the mean curvature of $Y^t$ viewed (via pullback by $\mathcal{F}_t$) as a function
on $Y$.

Let $\mathcal{F} : (-\varepsilon,\varepsilon) \times Y \rightarrow X$, be a $C^3$ normal variation satisfying (\ref{dHdt}).  As in the statement of Theorem \ref{varthm}, we let $f = \left\langle \mu_Y,\left.\frac{d}{dt}\right|_{t = 0}\mathcal{F}\right\rangle_{g_+}$, where $\mu_Y$ is the $(X^+,g_+)$-inward unit normal vector along $Y$.  Since $\mathcal{F}$ is normal, we can write 
\begin{equation} \label{fdef} 
\left.\frac{d}{dt}\right|_{t = 0}\mathcal{F}_t = f\mu_Y. 
\end{equation}
Also, let $\widetilde{\mathcal{F}} = \mathcal{F}|_{(-\varepsilon,\varepsilon) \times \Sigma}$.  Then $\tilde{\mathcal{F}}$ determines $\tilde{f} \in C^{\infty}(\Sigma)$ given by 
\begin{equation} \label{tfdef}
\tilde{f} = \left\langle \left.\frac{d}{dt}\right|_{t = 0}\widetilde{\mathcal{F}}, \bar{\nu}_M\right\rangle, 
\end{equation}
where $\bar{\nu}_M$ is the inward-pointing normal vector to $\Sigma$ in $M^+$ with respect to $\bar{h}$.

From now on, to simplify notation we will let primes denote $\frac{d}{dt}|_{t=0}$. By the formulas (\ref{hdot}), (\ref{Ldot}), and (\ref{Hdot}) in the appendix, the variations of the induced metric, second fundamental form, and mean curvature of $Y$ are given by 
\begin{align}
	\tilde{h}'_{\talpha\tbeta}&=-2fL_{\talpha\tbeta},\label{hprimeeq}\\\nonumber	L'_{\talpha\tbeta}&=\nabla^{\h}_{\talpha}\nabla^{\h}_{\tbeta}f-\tilde{h}^{\gamma\tdelta}L_{\talpha\tgamma}L_{\tbeta\tdelta}f+R^{g_+}_{\talpha \n\tbeta \n}f,\\\nonumber
	H' &= \Delta_{\tilde{h}} f + ( |L_Y|_{\h}^2 - 3)f.
\end{align}
By (\ref{dHdt}), $H' = 0$, so the last formula above implies that $f$ must satisfy 
\begin{equation}\label{JACOBI}
\Delta_{\tilde{h}}f = (3 -  |L_Y|_{\h}^2)f.
\end{equation}

\begin{lemma} \label{fLemma}   $f \in C^{\infty}(\mathring{Y})$ has an asymptotic expansion of the form  
\begin{equation}
	\label{fasympteq}
	f = r^{-1}\tilde{f} + o(1),
\end{equation}
where $\tilde{f} \in C^{\infty}(\Sigma)$ is given by (\ref{tfdef}).   

Conversely, if $|\mathring{L}_Y|_{\h}^2 \leq 3$ on $\mathring{Y}$, then given $\tilde{f} \in C^{\infty}(\Sigma)$, there is a unique solution $f$ to \eqref{JACOBI} satisfying the expansion (\ref{fasympteq}).  
\end{lemma}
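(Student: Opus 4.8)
The plan is to view (\ref{JACOBI}) as a boundary-value problem, at the conformal infinity $(\Sigma,[\bar{k}])$, for the operator $P:=\Delta_{\h}-(3-|\mathring{L}_Y|_{\h}^2)$ on the three-dimensional asymptotically hyperbolic manifold $(Y,\h)$. Since $H_Y\equiv 0$ we have $|L_Y|_{\h}^2=|\mathring{L}_Y|_{\h}^2$, and since $\bar{\h}=r^2\h$ extends smoothly up to $\Sigma$, conformal covariance of the tracefree second fundamental form gives $|\mathring{L}_Y|_{\h}^2=r^2|\mathring{\overline{L}}_Y|_{\bar{\h}}^2=O(r^2)$; thus $P$ is a decaying perturbation of $\Delta_{\h}-3$, whose indicial roots on a three-manifold (boundary dimension $2$) are the solutions of $\lambda(\lambda-2)=3$, namely $\lambda=-1$ and $\lambda=3$. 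All asymptotics below are with respect to a geodesic defining function $\rho$ for $\Sigma$ in $(Y,\h)$; from (\ref{bmueq}) one computes $|d(r|_Y)|_{\bar{\h}}^2=1+O(r^2)$ near $\Sigma$, so $\rho=r+O(r^3)$ and (\ref{fasympteq}) is equivalent to $f=\rho^{-1}\tilde{f}+o(1)$.

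For the first assertion, interior elliptic regularity for (\ref{JACOBI}) gives $f\in C^\infty(\mathring{Y})$. The key geometric input I would establish is that $\Sigma$ is totally geodesic in $(\overline{Y},\bar{\h})$: since $M$ is totally geodesic in $(\overline{X},\bar{g})$, for $A,B\in T\Sigma$ the vector $\nabla^{\bar{g}}_AB$ lies in $TM$, and decomposing it first along $Y$ and then along $\Sigma\subset Y$---using (\ref{bmueq}) and (\ref{bnueq}), which give $\bar{\mu}_Y|_\Sigma=\partial_w\in TM$ and $\bar{\nu}_{Y}|_\Sigma=\partial_r\notin TM$---forces the $\Sigma$-normal component of $\nabla^{\bar{g}}_AB$ to vanish. (This also follows from the estimate $|\overline{II}_{Y_{\varepsilon}}|_{\bar{g}}=O(\varepsilon)$ obtained in the proof of Theorem \ref{gbthm}.) Hence $\bar{\h}=d\rho^2+\tilde{k}_\rho$ with $\tilde{k}_\rho=\tilde{k}_0+O(\rho^2)$, and a short computation then gives $P(\rho^{-1}\phi)=O(\rho)$ for every $\phi\in C^\infty(\Sigma)$: there is no $\rho^{-1}$- or $\rho^0$-error. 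By the asymptotic regularity theory for the asymptotically hyperbolic Laplacian (cf.\ \cite{gl91}), an $O(r^{-1})$ solution of (\ref{JACOBI}) is polyhomogeneous, so combining this with the indicial roots $-1,3$ and the vanishing of the $\rho^0$-error its expansion has the form $f=\rho^{-1}a+O(\rho)$ with $a\in C^\infty(\Sigma)$. Finally, since $\mu_Y=r\bar{\mu}_Y$ and $\bar{g}=r^2g_+$, (\ref{fdef}) gives $f=r^{-1}\bar{g}(Z,\bar{\mu}_Y)$ with $Z=\left.\frac{d}{dt}\right|_{t=0}\mathcal{F}$; the function $\bar{g}(Z,\bar{\mu}_Y)$ is continuous up to $\Sigma$, and (\ref{bmueq}) gives $\bar{\mu}_Y|_\Sigma=\bar{\nu}_M$ while $Z|_\Sigma=\left.\frac{d}{dt}\right|_{t=0}\widetilde{\mathcal{F}}$, so (\ref{tfdef}) yields $a=\bar{g}(Z,\bar{\mu}_Y)|_\Sigma=\tilde{f}$. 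Thus $f=\rho^{-1}\tilde{f}+O(\rho)=r^{-1}\tilde{f}+o(1)$ near $\Sigma$, which is (\ref{fasympteq}).

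For the converse, assume $|\mathring{L}_Y|_{\h}^2\le 3$, so $V:=3-|\mathring{L}_Y|_{\h}^2\ge 0$ on $\mathring{Y}$. For uniqueness: if $f_1,f_2$ both solve (\ref{JACOBI}) and satisfy (\ref{fasympteq}) with the same $\tilde{f}$, then $u:=f_1-f_2$ solves (\ref{JACOBI}) with $u=o(1)$ at $\Sigma$; writing (\ref{JACOBI}) as $\Delta_{\h}u+(|\mathring{L}_Y|_{\h}^2-3)u=0$, whose zeroth-order coefficient is $\le 0$, and observing that $\sup_Y u$, if positive, is attained at an interior point (as $u\to 0$ at $\Sigma$), the strong maximum principle forces $u$ to be constant and hence $\equiv 0$, so $f_1=f_2$. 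For existence: let $\chi$ be a cutoff equal to $1$ near $\Sigma$ and choose $g\in C^\infty(\Sigma)$---possible because the indicial factor at order $\rho^1$ is $1\cdot(1-2)-3=-4\ne 0$---so that $f_\ast:=\chi(\rho^{-1}\tilde{f}+\rho\,g)$ has $Pf_\ast=O(\rho^2)$ near $\Sigma$; then $E:=Pf_\ast\in L^2(Y,\h)$. Since $V\ge 0$ is bounded, $-\Delta_{\h}+V$ is a nonnegative self-adjoint operator with trivial kernel (any $L^2$ solution of $Pu=0$ satisfies $\int_Y(|\nabla u|_{\h}^2+Vu^2)\,dv_{\h}=0$, hence $u\equiv 0$); as its essential spectrum is bounded away from $0$ (that of $-\Delta_{\h}$ is, and $V-3$ decays), $0$ lies in its resolvent set, so there is $u\in L^2(Y,\h)$ with $Pu=-E$. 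By interior and boundary elliptic regularity for asymptotically hyperbolic operators (cf.\ \cite{gl91}), $u\in C^\infty(\mathring{Y})$ is polyhomogeneous with leading order at least $2$ in $\rho$---the $\rho^{-1}$ mode being excluded by $u\in L^2$---so $u=O(\rho^2)=o(1)$. Then $f:=f_\ast+u$ solves (\ref{JACOBI}) and $f=\rho^{-1}\tilde{f}+O(\rho)=r^{-1}\tilde{f}+o(1)$ near $\Sigma$, establishing (\ref{fasympteq}).

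The principal obstacle is the converse direction: one must assemble (or invoke) the relevant asymptotically hyperbolic elliptic theory---indicial roots, $L^2$-solvability, and boundary regularity of solutions---and in both directions verify the geometric facts that $Y$ meets $M$ orthogonally with $\bar{\mu}_Y|_\Sigma=\bar{\nu}_M$ and that $\Sigma$ is totally geodesic in $(\overline{Y},\bar{\h})$; the vanishing of the $\rho^0$-coefficient, hence the sharpness of the $o(1)$ remainder in (\ref{fasympteq}), relies on the latter. Uniqueness, by contrast, is routine via the strong maximum principle once the hypothesis $|\mathring{L}_Y|_{\h}^2\le 3$ is in force.
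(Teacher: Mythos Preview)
Your argument is correct and follows the same overall approach as the paper—identifying the indicial roots $-1,3$ of $\mathcal{P}=\Delta_{\h}-(3-|L_Y|_{\h}^2)$ on the asymptotically hyperbolic manifold $(Y,\h)$ and reading off the leading coefficient as $\tilde{f}$—but you are substantially more explicit at two points. First, where the paper simply asserts $h^{\tilde 0\tilde 0}=1+O(r^2)$ and says the $o(1)$ remainder is then ``easy to see,'' you isolate and prove the underlying geometric fact that $\Sigma$ is totally geodesic in $(\overline{Y},\bar{\h})$, which is precisely what kills the $\rho^0$-term in $P(\rho^{-1}\phi)$. Second, for the converse the paper cites \cite{lee06} for ``standard arguments,'' whereas you supply them: uniqueness by the strong maximum principle (the hypothesis $|\mathring L_Y|_{\h}^2\le 3$ enters exactly here, making the zeroth-order coefficient nonpositive), and existence via the $L^2$-resolvent of $-\Delta_{\h}+V$ followed by boundary regularity. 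Your uniqueness argument is more elementary than the functional-analytic one implicit in the reference and makes the role of the curvature hypothesis transparent; otherwise the two proofs coincide.
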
 

\begin{proof} We first observe that near $M$, 
\begin{align} \label{Lsize} 
 |L_Y|_{\h}^2 = O(r^2). 
\end{align}
This follows from (\ref{2ff}) below, but it can also be seen by using the fact that $L_Y$ is trace-free (since $Y$ is minimal), and the the trace-free second fundamental form is a conformal invariant (of weight $1$).  Using (\ref{Lsize}), it is easy to see that the indicial roots of the operator 
\begin{align*}
	\mathcal{P} = \Delta_{\h} - (3 - |L_{Y}|_{\h}^2)
\end{align*}
are $-1$ and $3$.  It follows that $f$ has an expansion of the form
\begin{equation*}
f = r^{-1} f_{-1} + O(1), 
\end{equation*}
for some $f_{-1} \in C^{\infty}(\Sigma)$. However, using the expansion of the metric $\tilde{h}$ near $M$ in (\ref{hbarexp}), we have $h^{\tilde{0}\tilde{0}} = 1 +O(r^2)$, and using this it is easy to see that 
\begin{equation*}
f - r^{-1} f_{-1} = o(1).
\end{equation*}
as in (\ref{fasympteq}).  Since $\mu_Y = r \bar{\mu}_Y$, (\ref{fdef}) implies 
\begin{align*}
\left.\frac{d}{dt}\right|_{t = 0}\mathcal{F}_t &= f\mu_Y \\
&= \left[ r^{-1} f_{-1} + o(1) \right] \, r \bar{\mu}_Y \\
&= f_{-1} \bar{\mu}_Y + o(r),
\end{align*}
and it follows from (\ref{tfdef}) and the definition of $\tilde{\mathcal{F}}$ that $f_{-1} = \tilde{f}.$

Conversely, given $\tilde{f}$, if we let 
\begin{align*}
f_{-1} = r^{-1} \tilde{f}
\end{align*}
then $\mathcal{P}f_{-1} = O(1)$.  It then follows from standard arguments (see \cite{lee06}) that there is a unique solution of $\mathcal{P}f = 0$ with $f = r^{-1} f_{-1} + O(1)$.  
Again using the expansion of the metric it is readily checked that $f = r^{-1} \tilde{f} + o(1).$   \\
\end{proof}
 
\begin{remark}  Although  $f \in C^{\infty}(\mathring{Y})$, since the indicial roots of the equation satisfied by $f$ are $-1$ and $3$, the expansion of $f$ must in general be expected to have a term $r^3 \log r$, 
	so $rf \in C^{3,\alpha}(\bar{Y})$, and the optimal regularity of $\mathcal{F}$ is $C^3$.
\end{remark}

\begin{proof}[Proof of ~ Theorem \ref{varthm}]   The statement of Theorem \ref{varthm} consists of two claims: the formula for the derivative of $V_{+}^{+}$, and the assertion that $\tilde{f}$ determines $f$.  Since the latter follows from the uniqueness claim in Lemma \ref{fLemma}, to complete the proof of the theorem we just need to carry out the calculation of $\dot{V}_{+}^{+}$.

By Theorem \ref{gbthm},
\begin{align} \nonumber
3 V^{+}_{+}(X_t) = \pi^2(4\chi(X^+_t) - \chi(\partial Y^t)) - \frac{1}{8}\int_{X^{+}_t} |W_{g_+}|_{g_+}^2  dv_{g_+} - \int_{Y^t} \mathcal{C}_{Y^t}  dv_{\h_t}.
\end{align}
We let $\tilde{h}_t = g_+|_{T\mathring{Y}_t}$.
For $\varepsilon > 0$ small, recall that $X_{\varepsilon} = \left\{ x \in X: r(x) > \varepsilon \right\}$. We let $Y^t_{\varepsilon} = Y^t \cap X_{\varepsilon}$, and define
\begin{align}\nonumber
3 V_{\varepsilon}(t) = \pi^2(4\chi(X^+_t\cap X_{\epsilon}) - \chi(\partial Y^t_{\epsilon})) - \frac{1}{8}\int_{X^{+}_t \cap X_{\epsilon}} |W_{g_+}|_{g_+}^2 \, dv_{g_+} - \int_{Y^t_{\varepsilon}} \mathcal{C}_{Y^t} \, dv_{\h_t}.
\end{align}
Then
\begin{align} \nonumber
3 \frac{d}{dt} V_{\varepsilon}(t) \big|_{t=0} = - \frac{1}{8} \frac{d}{dt}  \int_{X^{+}_t \cap X_{\varepsilon}} |W_{g_+}|_{g_+}^2 \, dv_{g_+} \Big|_{t=0} - \frac{d}{dt}  \int_{Y^t_{\varepsilon}} \mathcal{C}_{Y^t} \, dv_{\h_t} \Big|_{t=0}.
\end{align}
For the first integral,
\begin{equation}\label{W}
 -\frac{1}{8}\frac{d}{dt}\bigg|_{t=0} \int_{X^+_t\cap X_{\varepsilon}}|W_{g_+}|_{g_+}^2dv_{g_+}=\frac{1}{8}\int_{Y_{\varepsilon}}|W_{g_+}|_{g_+}^2fdv_{\tilde{h}}.
 \end{equation}
\noindent
To analyze the second integral, we 
let $dv_{\h_t}^{\varepsilon} = \psi dv_{\h_t}$, where $\psi = \theta(r - \varepsilon)$, with $\theta$ the Heaviside function. Then
\begin{align*}
	\left.\frac{d}{dt}\right|_{t = 0}\int_{Y^t_{\varepsilon}}\mathcal{C}_{Y^t}dv_{\h_t} &= \left.\frac{d}{dt}\right|_{t = 0}\int_{Y^t}\mathcal{C}_{Y^t}dv_{\h_t}^{\varepsilon}\\
			&= \lim_{\tau \to 0}\frac{1}{\tau} \left[ \int_{Y} (\mathcal{C}_{Y^{\tau}}\circ \mathcal{F}_{\tau}) (\psi \circ \mathcal{F}_{\tau})
		(\mathcal{F}_{\tau}^*dv_{\h_{\tau}} - dv_{\h})\right.\\
		&\quad	+ \int_Y(\mathcal{C}_{Y^{\tau}}\circ \mathcal{F}_{\tau} - \mathcal{C}_Y)(\psi \circ \mathcal{F}_{\tau}) dv_{\h}\\
		&\quad+ \left.\int_{Y}\mathcal{C}_Y (\psi \circ \mathcal{F}_{\tau} - \psi) dv_{\h} \right]\\
		&= \int_{Y_{\varepsilon}}\mathcal{C}_Y \left( \left.\frac{d}{dt}dv_{\h_t}\right|_{t = 0} \right) \\
			&\quad+ \int_{Y_{\varepsilon}} \left.\frac{d}{dt}\mathcal{C}_{Y^t}\right|_{t = 0}dv_{\h} +
				\lim_{\tau \to 0} \frac{1}{\tau}\int_Y\mathcal{C}_Y (\psi \circ \mathcal{F}_{t} - \psi)dv_{\h}.
\end{align*}
Now by the Implicit Function Theorem, the equation $r(\mathcal{F}(t(p),r(p),\zeta(p))) = \varepsilon$ can be written as $r = \xi(t,\zeta)$ for some smooth
$\xi:(-\delta,\delta) \times \Sigma \to \mathbb{R}$. Let $\bar{k}_{\varepsilon}$ be the
metric induced on $\Sigma_{\varepsilon}$ by $\bar{g}$. Writing $dv_{\h} = \eta r^{-3}drdv_{\bar{k}_{\varepsilon}}$ for some smooth correction factor $\eta$ that is one on $\Sigma_{\varepsilon}$, we may use the fundamental theorem of calculus to write the last term as
\begin{align*}
	\lim_{\tau \to 0}\frac{1}{\tau}\int_{Y}\mathcal{C}_Y(\psi\circ \mathcal{F}_{\tau} - \psi)dv_{\h} &= -\lim_{\tau \to 0}\frac{1}{\tau}\int_{\Sigma_{\varepsilon}}
			\int_{\varepsilon}^{\xi(\tau,\zeta)}\mathcal{C}_Y(r,\zeta)\eta(r,\zeta)r^{-3}drdv_{\bar{k}_{\varepsilon}}(\zeta)\\
			&=-\int_{\Sigma_{\varepsilon}}\left.\frac{d}{dt}\right|_{t = 0}\int_{\varepsilon}^{\xi(t,\zeta)}\mathcal{C}_Y(r,\zeta)\eta(r,\zeta)r^{-3}drdv_{\bar{k}_{\varepsilon}}(\zeta)\\
			&= -\int_{\Sigma_{\varepsilon}}\mathcal{C}_Y(\varepsilon,\zeta)\varepsilon^{-3}\left.\frac{\partial \xi}{\partial t}\right|_{t = 0} dv_{\bar{k}_{\varepsilon}}(\zeta)\\
		&= \int_{\Sigma_{\varepsilon}}\mathcal{C}_Y\varepsilon^{-1}dr(f\mu_Y) dv_{k_{\varepsilon}}\\
		&= \int_{\Sigma_{\varepsilon}}\mathcal{C}_Y\langle r\partial_r,f\mu_Y\rangle_{g_+}dv_{k_{\varepsilon}}.
\end{align*}
Therefore
\begin{align*}
\frac{d}{dt} \int_{Y_{\varepsilon}^t} \mathcal{C}_{Y^t}  dv_{\h_t} \Big|_{t=0} &= \int_{Y_{\varepsilon}} \big( \frac{d}{dt}  \mathcal{C}_{Y^t} \big|_{t=0} \big)   dv_{\h} + \int_{Y_{\varepsilon}} \mathcal{C}_{Y}  \big(
\frac{d}{dt} dv_{\h_t}\big|_{t=0} \big)\numberthis\label{Vp10}\\
&\quad+\int_{\Sigma_{\varepsilon}}\mathcal{C}_{Y}\langle r\partial_r,f\mu_Y\rangle_{g_+} dv_{k_\varepsilon}.
\end{align*}

We dispose of the last term with
\begin{claim}
	$$ \lim_{\varepsilon \to 0}\int_{\Sigma_{\varepsilon}}\mathcal{C}_{Y}\langle r\partial_r,f\mu_Y\rangle_{g_+} dv_{k_{\varepsilon}} = 0.$$
\end{claim}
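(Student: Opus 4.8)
The plan is to prove the Claim by a size count, estimating separately — along $\Sigma_{\varepsilon} = Y \cap \{r = \varepsilon\}$ — the two factors $\mathcal{C}_{Y}$ and $\langle r\partial_r, f\mu_Y\rangle_{g_+}$ of the integrand and the area $\vol_{k_{\varepsilon}}(\Sigma_{\varepsilon})$, and then checking that the product of the three is $O(\varepsilon)$. For the first factor I would show $\mathcal{C}_Y = O(r^3)$ near $\Sigma$. Since $\mathcal{C}_Y$ is a pointwise conformal invariant of weight $-3$, we have $\mathcal{C}_Y = \mathcal{C}_Y^{g_+} = r^3\,\mathcal{C}_Y^{\bar{g}}$, where $\mathcal{C}_Y^{\bar{g}}$ denotes the same invariant computed from the compactified ambient metric $\bar{g} = r^2 g_+$ and the induced metric $\bar{\tilde{h}} = r^2\tilde{h}$; and $\mathcal{C}_Y^{\bar{g}}$ is bounded near $\Sigma$, since each of its ingredients — the Ricci tensors of $\bar{g}$ and $\bar{\tilde{h}}$, the trace-free second fundamental form of $Y$ in $(X,\bar{g})$, and the mean curvature $\overline{H}_Y$ — extends continuously, hence boundedly, up to $\Sigma$. (This boundedness was already used implicitly in the proof of Theorem \ref{gbthm}, where $\int_{Y_{\varepsilon}}\mathcal{C}_Y\,dv_{\tilde{h}} = \int_{\mathring{Y}}\mathcal{C}_Y\,dv_{\tilde{h}} + O(\varepsilon)$.)

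For the middle factor I would note that $r\partial_r$ is a $g_+$-unit vector, $|r\partial_r|_{g_+} = r|\partial_r|_{g_+} = r\cdot r^{-1} = 1$, and that since $\grad_{g_+}r = r^2\partial_r$, the vector $\varepsilon\partial_r$ is a $g_+$-unit normal to $M_{\varepsilon} = \{r = \varepsilon\}$ along $\Sigma_{\varepsilon}$. Hence $|\langle r\partial_r, \mu_Y\rangle_{g_+}| = |\langle\varepsilon\partial_r,\mu_Y\rangle_{g_+}| = |\cos\theta_0^{\varepsilon}|$ on $\Sigma_{\varepsilon}$, and, $\theta_0$ being a conformal invariant, the estimate $\cos\theta_0^{\varepsilon} = \tfrac12\varepsilon\,\overline{\eta}_M + O(\varepsilon^3\log\varepsilon)$ obtained in the proof of Theorem \ref{gbthm} gives $|\langle r\partial_r, \mu_Y\rangle_{g_+}| = O(\varepsilon)$. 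Combined with $|f| = O(\varepsilon^{-1})$ on $\Sigma_{\varepsilon}$, which follows from (\ref{fasympteq}) in Lemma \ref{fLemma}, this yields $|\langle r\partial_r, f\mu_Y\rangle_{g_+}| = O(1)$ on $\Sigma_{\varepsilon}$. Finally, $k_{\varepsilon} = \varepsilon^{-2}\bar{k}_{\varepsilon}$ with $\bar{k}_{\varepsilon} = \bar{g}|_{T\Sigma_{\varepsilon}} \to \bar{k}$, so $dv_{k_{\varepsilon}} = \varepsilon^{-2}dv_{\bar{k}_{\varepsilon}}$ and $\vol_{k_{\varepsilon}}(\Sigma_{\varepsilon}) = \varepsilon^{-2}\bigl(\vol_{\bar{k}}(\Sigma) + O(\varepsilon)\bigr) = O(\varepsilon^{-2})$. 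Multiplying the three estimates, $\big|\int_{\Sigma_{\varepsilon}}\mathcal{C}_Y\langle r\partial_r, f\mu_Y\rangle_{g_+}\,dv_{k_{\varepsilon}}\big| \leq O(\varepsilon^3)\cdot O(1)\cdot O(\varepsilon^{-2}) = O(\varepsilon) \to 0$.

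I expect the bound on the middle factor to be the main obstacle. The crude Cauchy--Schwarz estimate $|\langle r\partial_r, f\mu_Y\rangle_{g_+}| \leq |r\partial_r|_{g_+}\,|f\mu_Y|_{g_+} = O(r^{-1})$ is insufficient: it leaves the integral merely bounded rather than $o(1)$. One has to exploit the near-orthogonality of $Y$ and the level sets $\{r = \mathrm{const}\}$ along $\Sigma$ — equivalently, that $\mu_Y$ approaches a vector tangent to $M$ there — which is exactly what gains the extra factor of $r$, encoded in $\cos\theta_0^{\varepsilon} = O(\varepsilon)$. (Alternatively one could expand $\mu_Y = r\bar{\mu}_Y$ using (\ref{bmueq}) together with $f = r^{-1}\tilde{f} + o(1)$ and compute $\langle r\partial_r, f\mu_Y\rangle_{g_+} = r^{-1}\langle\partial_r, f\mu_Y\rangle_{\bar{g}} = -\tfrac12\tilde{f}\,\overline{\eta}_M + o(1)$ directly, using $\langle\partial_r,\partial_w\rangle_{\bar{g}} = 0$.) I would also stress that it matters $\mathcal{C}_Y$ is genuinely $O(r^3)$ and not merely the $O(r)$ one gets from estimating its defining terms one at a time; the improvement relies on the conformal invariance, in which those terms cancel.
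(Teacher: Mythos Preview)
Your proof is correct and follows essentially the same route as the paper's. Both arguments rest on the three estimates $\mathcal{C}_Y = r^3\mathcal{C}_Y^{\bar g} = O(r^3)$ (via conformal invariance of weight $-3$), $\langle r\partial_r,\mu_Y\rangle_{g_+} = O(\varepsilon)$ (via the near-orthogonality encoded in (\ref{bmueq}); you phrase this as $\cos\theta_0^{\varepsilon} = O(\varepsilon)$, the paper computes $\langle r\partial_r,\bar\mu_Y\rangle_{\bar g} = O(\varepsilon^2)$ directly, which amounts to the same thing), and $f = O(r^{-1})$, $dv_{k_\varepsilon} = \varepsilon^{-2}dv_{\bar k_\varepsilon}$, combining to give the integral is $O(\varepsilon)$.
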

\begin{proof}
	We know that $\mu_Y=r\bar{\mu}_Y$ and that $\mathcal{C}_{Y}^{g_+}=r^3\mathcal{C}_{Y}^{\bar{g}}.$  
	We also know from (\ref{bmueq}) that
\begin{equation*}
	\langle r\partial_r,\bar{\mu}_Y\rangle_{\bar{g}} =O(\varepsilon^2).
\end{equation*}
So we get $$\mathcal{C}_{Y}^{g_+}\langle r\partial_r, \mu_Y\rangle_{g_+} =r^3\mathcal{C}_{Y}^{\bar{g}}\langle r\partial_r,\mu_Y\rangle_{g_+} =O(\varepsilon^4).$$
Therefore, taking into account the asymptotics of $f$, we get \begin{equation}\label{Vp11}
	\int_{\Sigma_{\varepsilon}}\mathcal{C}_{Y}\langle r\partial_r,f\mu_Y\rangle_{g_+} dv_{k_\varepsilon}=O(\varepsilon).
 \end{equation}\end{proof}

By (\ref{dHdt}) and the formula for the variation of the volume form (\ref{dVdot}) in the appendix we have 
\begin{equation}\label{Vp12}
\frac{d}{dt} dv_{\h_t}\big|_{t=0} = H_{Y} dv_{\h} = 0,
\end{equation}
since $Y$ is minimal.  The minimality of $Y$ to first order also implies $H_{Y^t} = O(t^2)$.  Since $g_{+}$ is Einstein, the formula for $\mathcal{C}_{Y^t}$ thus simplifies to
\begin{equation} \label{Vp13}
	\mathcal{C}_{Y^t} =  -(L_{Y^t})^{\talpha\tbeta} R_{\talpha\tbeta}^{\h_t}
	  -\frac{1}{3}\tr_{\h_t}(L_{Y^t})^3 + O(t^2),
\end{equation}
where $L_{Y^t}$ is the second fundamental form of $Y^t$ with respect to $\mu_Y$ and $R^{\h_t}$ is the Ricci tensor of $\h_t$.  Combining $(\ref{Vp10})$, (\ref{Vp11}), $(\ref{Vp12})$ and $(\ref{Vp13})$ we obtain
\begin{align*} 
\frac{d}{dt} \int_{Y^t_{\varepsilon}}    \mathcal{C}_{Y^t} \, dv_{\h_t} \Big|_{t=0} 
&= -\int_{Y_{\varepsilon}} \frac{d}{dt}  \big( (L^{Y^t})^{\talpha\tbeta} R_{\talpha\tbeta}^{\h_t}   \big) \big|_{t=0} \, dv_{\h} \\
& \quad - \frac{1}{3}  \int_{Y_{\varepsilon}} \frac{d}{dt}  \tr_{\h_t}(L^{Y^t})^3   \big|_{t=0}  \, dv_{\h}+O(\varepsilon).
\end{align*}
We intend to apply integration by parts to the integrand of this expression to write quantities in terms of boundary integrals on $\Sigma$. We first write the integrands in terms of geometric quantities on $Y$.

Define \begin{align*}
 A&=(L_{Y^t})^{\talpha\tbeta}R_{\talpha\tbeta}^{\h_t}\\
 B&=\tr_{\h_t}(L_{Y^t})^3.
 \end{align*}

\noindent
Differentiating A gives
 \begin{align}
 A'&=\h^{\talpha\tgamma}\h^{\tbeta\tdelta}R^{\h}_{\tgamma\tdelta}\nabla_{\talpha}^{\h}\nabla_{\tbeta}^{\h}f+3f(L^2)^{\talpha\tbeta}R^{\h}_{\talpha\tbeta}+f\h^{\talpha\tgamma}
 \h^{\tbeta\tdelta}R^{\h}_{\tgamma\tdelta}R^{g_+}_{\talpha \n\tbeta \n}\\
 &\quad+\h^{\talpha\tgamma}\h^{\tbeta\tdelta}L_{\tgamma\tdelta}(R_{\talpha\tbeta}^{\h})'.\nonumber\end{align}
 \noindent A standard formula for the variation of the Ricci tensor (see e.g. \cite{top06}) gives us
\begin{align}	\label{Vp17}
	(R_{\talpha\tbeta}^{\h})'=&-\frac{1}{2}\big[\Delta_{\h} \tilde{h}'_{\talpha\tbeta}-\nabla^{\h}_{\talpha} (\tdelta_{\tbeta}\tilde{h}')-\nabla^{\h}_{\tbeta}
	(\tdelta_{\talpha}\tilde{h}')+\nabla^{\h}_{\talpha}\nabla^{\h}_{\tbeta}(\tr_{\tilde{h}} \tilde{h}')\big ]\\\nonumber
	&-\tilde{h}^{\tgamma \teta}\tilde{h}^{\tdelta \tzeta} R^{\h}_{\talpha \tgamma \tbeta \tdelta} \tilde{h}'_{\teta \tzeta} +\frac{1}{2}\tilde{h}^{\teta \tzeta}R^{\h}_{\talpha \teta} 
	\tilde{h}_{\tbeta \tzeta}'+\frac{1}{2}\tilde{h}^{\teta\tzeta}R^{\h}_{\tbeta \teta} \tilde{h}_{\talpha \tzeta}'.
 \end{align}
Here $\tdelta$ is the divergence with respect to $\h$. 
Now, by (\ref{hprimeeq}), $\Delta_{\h}\h_{\talpha\tbeta}' = -2\Delta^{\h}(fL_{\talpha\tbeta})$. By the same equation,
\begin{equation*}
	\tr_{\tilde{h}}\tilde{h}'=0.
\end{equation*}
Taking the divergence of both sides of (\ref{hprimeeq}) gives us
\begin{align}\label{Vp16}
	\tdelta_{\tbeta}\tilde{h}'&=\tilde{h}^{\talpha\tgamma}\nabla^{\h}_{\tgamma}\tilde{h}_{\talpha\tbeta}'\\\nonumber
	&=-\nabla_{\h}^{\talpha}(2fL_{\talpha\tbeta})\\\nonumber
	&=-2f\nabla^{\talpha}_{\h}L_{\talpha\tbeta}-2L_{\talpha\tbeta}\nabla_{\h}^{\talpha}f.
\end{align}
Now by Codazzi, we have
  \begin{equation*}
	  R^{g_+}_{\talpha\tbeta\tgamma \n}= \nabla_{\tbeta}^{\h}L_{\talpha\tgamma} -\nabla_{\talpha}^{\h}L_{\tbeta\tgamma}
  \end{equation*}
  along $Y$.
  Contracting $\talpha$ and $\tgamma$ and using the Einstein condition on $g_+$ along with the fact that $Y$ is minimal gives
\begin{equation*}
	0=R^{g_+}_{\tbeta \n}=-\tilde{h}^{\talpha \tgamma}\nabla^{\h}_{\talpha}L_{\tbeta \tgamma}+\tilde{h}^{\talpha\tgamma}\nabla_{\tbeta}^{\h}L_{\talpha\tgamma}= 
	-\nabla_{\h}^{\tgamma}L_{\tbeta \tgamma}+\nabla_{\tbeta}^{\h}H=-\nabla_{\h}^{\talpha}L_{\tbeta\talpha}.
  \end{equation*}
  Hence
\begin{equation}\label{Vp18}
	\nabla^{\talpha}_{\h}L_{\tbeta\talpha}=0
\end{equation}
and
\begin{equation*}
\tdelta_{\tbeta}\tilde{h}'=-2L_{\tbeta \tgamma}\nabla ^{\tgamma}f.
\end{equation*}
Turning to the fifth term of (\ref{Vp17}), we consider the Riemann tensor on $Y$.
As the dimension of $Y$ is three, it follows that the Weyl tensor of $\h$ vanishes, giving us 
\begin{align*}
	R^{\h}_{\talpha \tgamma \tbeta \tdelta} &= \tilde{h}_{\talpha \tbeta}R^{\h}_{\tgamma \tdelta}-\tilde{h}_{\talpha\tdelta}R^{\h}_{\tbeta\tgamma}-
	\tilde{h}_{\tbeta\tgamma}R^{\h}_{\talpha\tdelta}+\tilde{h}_{\tgamma\tdelta}R^{\h}_{\talpha\tbeta}-\frac{1}{2}R^{\h}\tilde{h}_{\talpha\tbeta}\tilde{h}_{\tgamma\tdelta}\\
	&\quad +\frac{1}{2}R^{\h}\tilde{h}_{\talpha\tdelta}\tilde{h}_{\tbeta\tgamma}.	
\end{align*}
Thus,
\begin{align}\nonumber
	-\tilde{h}^{\tgamma\teta}\tilde{h}^{\tdelta \tzeta}R^{\h}_{\talpha\tgamma\tbeta\tdelta}\tilde{h}'_{\teta\tzeta}
&=-R_{\h}^{\tzeta\teta}\tilde{h}_{\teta\tzeta}'\tilde{h}_{\talpha\tbeta}+\tilde{h}^{\teta\tzeta}R^{\h}_{\tbeta\teta}\tilde{h}'_{\talpha\tzeta}+
\tilde{h}^{\teta\tzeta}R^{\h}_{\talpha\teta}\tilde{h}'_{\tbeta\tzeta}-\frac{1}{2} R^{\h}\tilde{h}'_{\talpha\tbeta}.\end{align}
So we can write the last three terms of (\ref{Vp17}) as
\begin{align}\nonumber
	&-\tilde{h}^{\tgamma\teta}\tilde{h}^{\tdelta \tzeta}R^{\h}_{\talpha\tgamma\tbeta\tdelta}\tilde{h}'_{\teta\tzeta}+\frac{1}{2}\h^{\teta\tzeta}R^{\h}_{\talpha\teta}\h'_{\tbeta\tzeta}+
	\frac{1}{2}\h^{\teta\tzeta}R^{\h}_{\tbeta\teta}\h'_{\talpha\tzeta} =\\\nonumber
	& \quad-R^{\h}_{\teta\tzeta}(\h^{\teta\tzeta})'\h_{\talpha\tbeta}+\frac{3}{2}\h^{\teta\tzeta}R^{\h}_{\tbeta\teta}\h'_{\talpha\tzeta}+\frac{3}{2} \h^{\teta\tzeta}R^{\h}_{\talpha\teta}\h'_{\tbeta\tzeta}
	-\frac{1}{2}R^{\h}\h'_{\talpha\tbeta}.
\end{align}
Therefore we have found
\begin{align}\nonumber
	(R^{\h}_{\talpha\tbeta})'=&\Delta^{\h}(fL_{\talpha\tbeta})-\nabla_{\talpha}^{\h}(L_{\tbeta \tgamma}\nabla^{\tgamma}f)-\nabla_{\tbeta}^{\h}(L_{\talpha \tgamma}\nabla^{\tgamma}f)
	+2f(R^{\h}_{\teta \tzeta}L^{\teta \tzeta})\tilde{h}_{\talpha\tbeta}\\\nonumber
&-3fL_{\talpha}{}^{\tgamma}R^{\h}_{\tbeta\tgamma}-3fL_{\tbeta}{}^{\tgamma}R^{\h}_{\talpha\tgamma}+fR^{\h}L_{\talpha\tbeta}.\end{align}
This then lets us write down an expression for $\langle L, (\ric^{\h})' \rangle_{\h}:$
 \begin{equation*}
	 L^{\talpha\tbeta}(R^{\h}_{\talpha\tbeta})'=L^{\talpha\tbeta}\Delta_{\h}(fL_{\talpha\tbeta})-2L^{\talpha\tbeta}\nabla_{\tbeta}^{\h}(L_{\talpha\tgamma}\nabla^{\tgamma}f)
	 -6f(L^2)^{\talpha\tbeta}R^{\h}_{\talpha\tbeta}+fR^{\h}|L|^2;
\end{equation*}
hence 
\begin{align*}
	A'&= R_{\h}^{\talpha\tbeta}\nabla_{\talpha}^{\h}\nabla_{\tbeta}^{\h}f-3f(L^2)^{\talpha\tbeta}R^{\h}_{\talpha\tbeta}+fR_{\h}^{\talpha\tbeta}R^{g_+}_{\talpha \n\tbeta \n}\\
&\hspace{6mm}+L^{\talpha\tbeta}\Delta_{\h}(fL_{\talpha\tbeta})-2L^{\talpha\tbeta}\nabla_{\talpha}^{\h}(L_{\tbeta\tgamma}\nabla_{\h}^{\tgamma}f)+fR^{\h}|L|^2.\end{align*}

Using formula (\ref{Ldot}) in the appendix for the the variation of the second fundamental form, it is straightforward to see that $B'$ is given by
\begin{align*}
B'=(\tr L^3)'&=3(\h^{\talpha\tgamma})'\h^{\tbeta\teta}\h^{\tdelta\tzeta}L_{\talpha\tbeta}L_{\tgamma\tdelta}L_{\teta\tzeta}+3\h^{\talpha\tgamma}\h^{\tbeta\teta}\h^{\tdelta\tzeta}L'_{\talpha\tbeta}L_{\tgamma\tdelta}L_{\teta\tzeta}\\
&=6f|L^2|_{\h}^2+3\h^{\talpha\tgamma}\h^{\tbeta\teta}\h^{\tdelta\tzeta}\big[\nabla^{\h}_{\talpha}\nabla^{\h}_{\tbeta}f-L^2_{\talpha\tbeta}f\\
&\quad+(R^{g_+}_{\talpha \n\tbeta \n}L_{\talpha}^{\tgamma}L_{\tbeta\tgamma})f \big]
L_{\tgamma\tdelta}L_{\teta\tzeta}\\
&=3f|L^2|_{\h}^2+3({\nabla}^{\h}_{\talpha}{\nabla}^{\h}_{\tbeta}f)(L^2)^{\talpha\tbeta}+3f R^{g_+}_{\talpha \n\tbeta \n}(L^2)^{\talpha\tbeta}.
\end{align*}
It will be useful to record two consequences of the Gauss curvature equation.  First, using the Einstein condition, the Ricci curvature of $\h$ can be expressed as  
\begin{equation}\label{Vp22}
R^{\h}_{\teta\tzeta}=-3\h_{\teta\tzeta}-R^{g_+}_{\teta \hat{n}\tzeta \hat{n}}-(L^2)_{\teta\tzeta}.
\end{equation}
It follows that the scalar curvature of $\h$ is given by
\begin{equation}
	 \label{RYeq}
	 R_{\h} =-6-|L|^2.
 \end{equation}
We now focus on rewriting four terms in $A'$ and $B'$ to make them amenable to integration by parts.
We thus make the following definitions:
\begin{align*}
	D_1&=	\int_{Y_{\varepsilon}}R_{\h}^{\talpha\tbeta}\nabla^{\h}_{\talpha}\nabla^{\h}_{\tbeta}fdv_{\h}\\
	D_2&=\int_{Y_{\varepsilon}}L^{\talpha\tbeta}\Delta_{\h} (fL_{\talpha\tbeta})dv_{\h}\\
	D_3&=-\int _{Y_{\varepsilon}}2L^{\talpha\tbeta}\nabla_{\talpha}^{\h}(L_{\tbeta\tgamma}\nabla_{\h}^{\tgamma}f)dv_{\h}\\
	D_4&=\int_{Y_{\varepsilon}}3(L^2)^{\talpha\tbeta}\nabla_{\talpha}^{\h}\nabla_{\tbeta}^{\h}f dv_{\h}.
\end{align*}
We will write each of the above terms as an integral over $Y_{\varepsilon}$ plus an integral over $\Sigma_{\varepsilon}$. 
Recall that $\nu_{Y_{\varepsilon}}$ is the inward pointing $\h$ unit-normal vector field to $\Sigma_{\varepsilon}$ in $Y_{\varepsilon}.$
Integrating by parts then applying the second contracted Bianchi identity and (\ref{RYeq}), we find 
\begin{align*}
	D_1&=\int_{Y_{\varepsilon}}\h^{\talpha\tgamma}\h^{\tbeta\tdelta}R^{\h}_{
	\tgamma\tdelta}\nabla^{\h}_{\talpha}\nabla^{\h}_{\tbeta}fdv_{\h}\\\nonumber
	&=-\int_{Y_{\varepsilon}} \h^{\talpha\tgamma}\h^{\tbeta\tdelta}\nabla_{\talpha}^{\h}R^{\h}_{\tgamma\tdelta}\nabla_{\tbeta}^{\h}fdv_{\tilde{h}}-
	\oint_{\Sigma_{\varepsilon}}R^{\h}_{\talpha\tbeta}\nu_{Y_{\varepsilon}}^{\talpha}\nabla^{\tbeta}_{\h}f dv_{k_{\varepsilon}}\\
	\nonumber&=-\int_{Y_{\varepsilon}}\frac{1}{2}(\nabla^{\talpha}_{\h}R_{\h})\nabla_{\talpha}^{\h}fdv_{\h}-
	\oint_{\Sigma_{\varepsilon}}R^{\h}_{\talpha\tbeta}\nu_{Y_{\varepsilon}}^{\talpha}\nabla^{\tbeta}_{\h}f dv_{k_{\varepsilon}}\\\nonumber
	&=\int_{Y_{\varepsilon}}\frac{1}{2}R_{\h}\Delta_{\h} f dv_{\h}+
	\oint_{\Sigma_{\varepsilon}}\Big[\frac{1}{2}R_{\h}\nu_{Y_{\varepsilon}}(f)-\ric_{\h}(\nu_{Y_{\varepsilon}},\nabla^{\h}f) \Big]dv_{k_{\varepsilon}}\\\nonumber
&=\int_{Y_{\varepsilon}} \bigg(\frac{6+|L|^2}{2}\bigg)(|L|^2-3)f dv_{\h}-
\oint_{\Sigma_{\varepsilon}} \bigg[\ric_{\h}(\nu_{Y_{\varepsilon}},\nabla^{\h}f)- \frac{1}{2}R_{\h}\nu_{Y_{\varepsilon}}(f) \bigg]dv_{k_{\varepsilon}}\\\nonumber
&=\int_{Y_{\varepsilon}}\bigg(\frac{|L|^4}{2} + \frac{3|L|^2}{2} -9\bigg) fdv_{\h}-\oint_{\Sigma_{\varepsilon}}\big(\ric_{\h}-  \frac{1}{2}R_{\h}\h\big)\bigg(\nabla^{\h}f,\nu_{Y_{\varepsilon}}\bigg) dv_{k_{\varepsilon}}.\\\nonumber\\\nonumber
\intertext{Next,}\nonumber
D_2&=\int_{Y_{\varepsilon}}L^{\talpha\tbeta}\Delta_{\h}(fL_{\talpha\tbeta})dv_{\h}\\\nonumber
&=\int_{Y_{\varepsilon}}\bigg[|L|^2\Delta_{\h}f+fL^{\talpha\tbeta}\Delta_{\h}L_{\talpha\tbeta}+2L^{\talpha\tbeta}\nabla^{\tgamma}_{\h}f\nabla_{\tgamma}^{\h}L_{\talpha\tbeta}\bigg]dv_{\h}\\\nonumber
&=\int_{Y_{\varepsilon}}\bigg[|L|^2\Delta_{\h}f+f L^{\talpha\tbeta}\Delta_{\h}L_{\talpha\tbeta}+\langle\nabla^{\h}f ,\nabla^{\h}|L|^2 \rangle\bigg] dv_{\h}\\\nonumber
&=\int_{Y_{\varepsilon}}fL^{\talpha\tbeta}\Delta_{\h}L_{\talpha\tbeta} dv_{\h}-\oint_{\Sigma_{\varepsilon}}|L|^2\nu_{Y_{\varepsilon}}(f)dv_{k_{\varepsilon}}.
\end{align*}
We want to use a Simons-type identity to replace the term $\Delta_{\h} L_{\talpha \tbeta}$.  By the Codazzi equation,
$$R^{g_+}_{\tgamma\talpha\tbeta \n}=\nabla^{\h}_{\talpha}L_{\tgamma\tbeta}-\nabla_{\tgamma}^{\h}L_{\talpha\tbeta},$$
so we may write 
\begin{align}\label{Vp14}
	\h^{\tdelta\tgamma}\nabla_{\tdelta}^{\h}R^{g_+}_{\tgamma\talpha\tbeta \n}&=\h^{\tdelta\tgamma}\nabla_{\tdelta}^{\h}\nabla_{\talpha}^{\h}L_{\tgamma\tbeta}-
	\h^{\tdelta\tgamma}\nabla_{\tdelta}^{\h}\nabla_{\tgamma}^{\h}L_{\talpha\tbeta}\\\nonumber
	&=\nabla_{\h}^{\tgamma}\nabla^{\h}_{\talpha}L_{\tgamma\tbeta}-\Delta_{\h}L_{\talpha\tbeta}.\nonumber
\end{align}
\noindent
Now we want to commute the covariant derivatives in the first term on the right-hand side of this equation. By the Ricci identity,
\begin{align}\nonumber
	\nabla_{\tdelta}^{\h}\nabla_{\talpha}^{\h}L_{\tgamma\tbeta}-\nabla_{\talpha}^{\h}\nabla_{\tdelta}^{\h}L_{\tgamma\tbeta}&=
	R_{\tdelta\talpha\tgamma}^{\h}{}^{\teta}L_{\teta\tbeta}+R_{\tdelta\talpha\tbeta}^{\h}{}^{\teta}L_{\teta\tgamma}.\\\nonumber	
\end{align}
Contracting $\tdelta$ and $\tgamma$ and using $\eqref{Vp18}$ gives
\begin{equation}\label{Vp20}
	\h^{\tdelta\tgamma}\nabla^{\h}_{\tdelta}\nabla^{\h}_{\talpha}L_{\tgamma\tbeta}=\h^{\tdelta\tgamma}R_{\tdelta\talpha\tbeta}^{\h}{}^{\teta}L_{\teta\tgamma}+
	R_{\talpha\tgamma}^{\h}L^{\tgamma}{}_{\tbeta}.
\end{equation}

Combining (\ref{Vp20}) and (\ref{Vp14}), we get 
\begin{equation*}
    \Delta_{\h}L_{\talpha\tbeta} = \h^{\tdelta\tgamma}R^{\h}_{\tdelta\talpha\tbeta}{}^{\teta}L_{\teta\tgamma} + R^{\h}_{\talpha\tgamma}L^{\tgamma}{}_{\tbeta} - \h^{\tdelta\tgamma}\nabla^{\h}_{\tdelta}R^{g_+}_{\tgamma\talpha\tbeta\n}.
\end{equation*}

Therefore,
\begin{align} D_2&=\int_{Y_{\varepsilon}}\bigg[L^{\talpha\tbeta}L^{\tgamma\tdelta}R^{\h}_{\tdelta\talpha \tbeta\tgamma}+(L^2)^{\talpha\tbeta}R^{\h}_{\talpha\tbeta}-L^{\talpha\tbeta}\h^{\tdelta\tgamma}\nabla_{\tdelta}^{\h}R^{g_+}_{\tgamma\talpha\tbeta \n}\bigg]fdv_{\h}\\\nonumber &\hspace{4mm}-
	\oint_{\Sigma_{\varepsilon}}|L|^2\nu_{Y_{\varepsilon}}(f)dv_{k_{\varepsilon}}. \\\nonumber
\end{align}
Applying integration by parts to $D_3$ and using \eqref{Vp18} yields
 \begin{align}
	D_3&=-\int_{Y_{\varepsilon}}2L^{\talpha\tbeta}\nabla_{\talpha}^{\h}(L_{\tbeta\tgamma}\nabla_{\h}^{\tgamma}f)dv_{\h}\\\nonumber
&=\oint_{\Sigma_{\varepsilon}}2L^2(\nabla^{\h} f, \nu_{Y_{\varepsilon}})dv_{k_{\varepsilon}}.\nonumber
	\intertext{Now again using integration by parts and applying \eqref{Vp18} we see}\label{D4}
	D_4&=\int_{Y_{\varepsilon}}3(L^2)^{\talpha\tbeta}\nabla_{\talpha}^{\h}\nabla_{\tbeta}^{\h}f dv_{\tilde{h}}\\\nonumber
&=-\int_{Y_{\varepsilon}}3L^{\talpha\tgamma}\nabla_{\talpha}^{\h}L_{\tgamma}{}^{\tbeta}\nabla_{\tbeta}^{\h}f dv_{\h}-\oint_{\Sigma_{\varepsilon}}3L_{\talpha\tgamma}L^{\tgamma\tbeta}\nu^{\talpha}_{Y_{\varepsilon}}
\nabla_{\tbeta}^{\h}f dv_{k_{\varepsilon}}.\\\nonumber\end{align}
In order to rewrite the first term on the right, we consider the following:
\begin{align}\label{Vp28}
	-3\h^{\tbeta\tdelta}L^{\talpha\tgamma}\nabla_{\talpha}^{\h}L_{\tdelta\tgamma}\nabla_{\tbeta}^{\h}f&=-3\h^{\tbeta\tdelta}L^{\talpha\tgamma}(\nabla_{\talpha}^{\h}L_{\tdelta\tgamma}-\nabla^{\h}_{\tdelta}L_{\talpha\tgamma})
	\nabla^{\h}_{\tbeta}f-3\h^{\tbeta\tdelta}L^{\talpha\tgamma}\nabla_{\tdelta}^{\h}L_{\talpha\tgamma}\nabla^{\h}_{\tbeta}f\\\nonumber
	&=-3L^{\talpha\tgamma}R^{g_+}_{\tbeta\talpha\tgamma \n}\nabla^{\tbeta}_{\h}f-\frac{3}{2}\nabla_{\tbeta}|L|^2\nabla_{\h}^{\tbeta}f.
\end{align}

Using the above formula and then applying integration by parts again we see
\begin{align}\label{D4IBP}
-\int_{Y_{\varepsilon}}3L^{\talpha\tgamma}\nabla_{\talpha}^{\h}L_{\tgamma}{}^{\tbeta}\nabla_{\tbeta}^{\h}f dv_{\h}=&\int_{Y_{\varepsilon}}\left(-3L^{\talpha\tgamma}R^{g_+}_{\tbeta\talpha\tgamma \n}\nabla^{\tbeta}_{\h}f-\frac{3}{2}\nabla_{\tbeta}|L|^2\nabla_{\h}^{\tbeta}f\right) dv_{\h}\\\nonumber
=& 3\int_{Y_{\varepsilon}}\nabla_{\h}^{\tbeta}L^{\talpha\tgamma}R^{g_+}_{\tbeta\talpha\tgamma \n}f  dv_{\h}+3\int_{Y_{\varepsilon}}L^{\talpha\tgamma}\nabla_{\h}^{\tbeta}R^{g_+}_{\tbeta\talpha\tgamma \n}f  dv_{\h}\\\nonumber&+\int_{Y_{\varepsilon}}\frac{3}{2}|L|^2\nabla^{\tbeta}\nabla_{\tbeta}f dv_{\h}  +3\oint_{\Sigma_{\varepsilon}} L^{\talpha\tgamma}\nu_{Y_{\varepsilon}}^{\tdelta}R^{g_+}_{\tdelta\talpha\tgamma \n}f \\\nonumber
&+\frac{3}{2}\oint_{\Sigma_{\varepsilon}}|L|^2\nabla_{\nu_{Y_\varepsilon}}f dv_{k_{\varepsilon}}
\end{align}
We also observe that
\begin{align}\label{Vp24} 3f\nabla_{\h}^{\tbeta}L^{\talpha\tgamma}R^{g_+}_{\tbeta\talpha\tgamma \n} &= \frac{3}{2}f(\nabla_{\tbeta}^{\h}L_{\talpha\tgamma}-\nabla_{\talpha}^{\h}L_{\tbeta\tgamma})
	R_{g_+}{}^{\tbeta\talpha\tgamma}{}_{\n}
	\\\nonumber &= \frac{3}{2}fR^{g_+}_{\talpha\tbeta\tgamma \n}R_{g_+}{}^{\tbeta\talpha\tgamma}{}_{\n}\\\nonumber
	&= -\frac{3}{2}fW^{g_+}_{\talpha\tbeta\tgamma \n}W_{g_+}{}^{\talpha\tbeta\tgamma}{}_{\n}.
\nonumber\end{align}
If we use the above formula to re-write the first term on the right-hand side of \eqref{D4IBP}, and use the resulting formula to re-write \eqref{D4}, we get
\begin{align*}
D_4=&\int_{Y_{\varepsilon}} \bigg[3fL^{\talpha\tbeta}\h^{\tdelta\tgamma}\nabla_{\tdelta}^{\h}R^{g_+}_{\tgamma\talpha\tbeta \n}\\\nonumber
 &\quad-\frac{3}{2}fW^{g_+}_{\talpha\tbeta\tgamma \n}W_{g_+}{}^{\talpha\tbeta\tgamma}{}_{\n}
+\frac{3}{2}|L|^2\Delta_{\h}f\bigg] dv_{\h}\\\nonumber &\quad+\oint_{\Sigma_{\varepsilon}}\bigg[3fL^{\talpha\tgamma}R^{g_+}_{\tbeta\talpha\tgamma \n}\nu^{\tbeta}_{Y_{\varepsilon}}
+\frac{3}{2}|L|^2\nu_{Y_{\varepsilon}}(f)-3L^2(\nabla^{\h} f, \nu_{Y_{\varepsilon}})\bigg]dv_{k_{\varepsilon}}.\end{align*}
It is interesting to note that the cancellation of the first term in $D_4$ with the last
interior term of $D_2$ accounts for the absence of any derivatives of Weyl terms in our
final formula.

Now we want to compute $\int_{Y_{\varepsilon}}\mathcal{C}'dv_{\h}=
-\int_{Y_{\varepsilon}} A'dv_{\h} -\frac{1}{3}\int_{Y_{\varepsilon}} B' dv_{\h}.$
Using our expressions for $D_1,D_2,D_3$ and $D_4$ and gathering together all of the terms that appear as integrals over $Y_{\varepsilon}$ we get:
\begin{align}
	I_{Y} &:= -\int_{Y_{\varepsilon}} \bigg[\bigg(\frac{3|L|^2}{2}+\frac{|L|^4}{2}-9\bigg)f\\\nonumber
&\quad+L^{\talpha\tbeta}L^{\tgamma\tdelta}R^{\h}_{\tgamma\talpha \tbeta\tdelta}f+R^{\h}_{\talpha\tbeta}(L^2)^{\talpha\tbeta}f-L^{\talpha\tbeta}\h^{\tdelta\tgamma}\nabla_{\tdelta}^{\h}R^{g_+}_{\tgamma\talpha\tbeta \n}f&\\ \nonumber
&\quad+fL^{\talpha\tbeta}\h^{\tdelta\tgamma}\nabla_{\tdelta}^{\h}R^{g_+}_{\tgamma\talpha\tbeta \n}\\\nonumber
 &\quad-\frac{1}{2}fW^{g_+}_{\talpha\tbeta\tgamma \n}W_{g_+}{}^{\talpha\tbeta\tgamma}{}_{\n}
+\frac{1}{2}|L|^2\Delta_{\h}f&\\\nonumber
  &\quad-3f(L^2)^{\talpha\tbeta}R^{\h}_{\talpha\tbeta}+fR_{\h}^{\talpha\beta}R^{g_+}_{\talpha \n\tbeta \n}+fR^{\h}|L|^2\\\nonumber
  &\quad+f|L^2|_{\h}^2+f R^{g_+}_{\talpha \n\tbeta \n}(L^2)^{\talpha\tbeta}\bigg] dv_{\h}\\\nonumber
&=
	-\int_{Y_{\varepsilon}} \bigg[\bigg(\frac{3|L|^2}{2}+\frac{|L|^4}{2}-9\bigg)f\label{Vp25}\\
		&\quad+L^{\talpha\tbeta}L^{\tgamma\tdelta}R^{\h}_{\tgamma\talpha \tbeta\tdelta}f&\\ \nonumber
 &\quad-\frac{1}{2}fW^{g_+}_{\talpha\tbeta\tgamma \n}W_{g_+}{}^{\talpha\tbeta\tgamma}{}_{\n}
+\frac{1}{2}|L|^2\Delta_{\h}f&\\\nonumber
  &-2f(L^2)^{\talpha\tbeta}R^{\h}_{\talpha\tbeta}+fR_{\h}^{\talpha\beta}R^{g_+}_{\talpha \n\tbeta \n}+fR^{\h}|L|^2\\\nonumber
  &\quad+f|L^2|_{\h}^2+f R^{g_+}_{\talpha \n\tbeta \n}(L^2)^{\talpha\tbeta}\bigg] dv_{\h}.\end{align}
  Next, decomposing the Riemann tensor of $g_+$ gives 
\begin{equation}
	\label{ricweq}
	R^{g_+}_{\talpha \hat{n}\tbeta \hat{n}}=W^{g_+}_{\talpha \hat{n}\tbeta \hat{n}}-\h_{\talpha\tbeta}.
\end{equation}
Applying \eqref{ricweq} to \eqref{Vp22} gives
\begin{equation}\label{Ric}
	R^{\h}_{\talpha\tbeta}= -L^2_{\talpha\tbeta}-2\h_{\talpha\tbeta}-W^{g_+}_{\talpha \hat{n}\tbeta \hat{n}}.
\end{equation}
Decomposing the Riemann tensor of $\h$ allows us to write
\begin{align}\label{Rich}
	R^{\h}_{\tdelta\talpha\tbeta\tzeta}L^{\tzeta\tdelta}L^{\talpha\tbeta}&=L^{\tzeta\tdelta}L^{\talpha\tbeta}[\h_{\tdelta\tbeta}R^{\h}_{\talpha\tzeta}-
		\h_{\tdelta\tzeta}R^{\h}_{\talpha\tbeta}-\h_{\talpha\tbeta}R^{\h}_{\tdelta\tzeta}+\h_{\talpha\tzeta}R^{\h}_{\tdelta\tbeta}\\\nonumber
	& \hspace{6.5mm}-\frac{1}{2}R_{\h}\h_{\tdelta\tbeta}\h_{\talpha\tzeta}+\frac{1}{2}R_{\h}\h_{\tdelta\tzeta}\h_{\talpha\tbeta}].\\\nonumber
\intertext{Next we apply \eqref{Ric} and then \eqref{RYeq} to get}	\nonumber
R^{\h}_{\tdelta\talpha\tbeta\tzeta}L^{\tzeta\tdelta}L^{\talpha\tbeta}&=2(L^2)^{\talpha\tbeta}R^{\h}_{\talpha\tbeta}-
	\frac{1}{2}|L|^2R_{\h}\\\nonumber
	&=-|L|^4-4|L|^2-2(L^2)^{\talpha\tbeta}W^{g_+}_{\talpha\hat{n}\tbeta\hat{n}}+3|L|^2+\frac{1}{2}|L|^4.
\end{align}
Note that we also use here the fact that $|L^2|^2=\frac{1}{2}|L|^4,$ which holds because $H_Y = 0$.

Simplifying gives
\begin{align}L^{\talpha\tbeta}L^{\tgamma\tdelta}R^{\h}_{\tgamma\talpha \tbeta\tdelta}&=-|L|^2-\frac{1}{2}|L|^4-2(L^2)^{\talpha\tbeta}W^{g_+}_{\talpha\hat{n}\tbeta\hat{n}}.\nonumber
\end{align}
Applying this to re-write $L^{\talpha\tbeta}L^{\tgamma\tdelta}R^{\h}_{\tgamma\talpha \tbeta\tdelta}$ and using \eqref{JACOBI} to re-write $\Delta^{\h}f$ gives
\begin{align}\nonumber
	I_Y = -\int_{Y_{\varepsilon}}& \bigg[\bigg(\frac{3|L|^2}{2}+\frac{|L|^4}{2}-9\bigg)f\\\nonumber
		&-|L|^2f-2(L^2)^{\talpha\tbeta}W^{g_+}_{\talpha\hat{n}\tbeta\hat{n}}f-\frac{1}{2}|L|^4f&\\ \nonumber
 &-\frac{1}{2}fW^{g_+}_{\talpha\tbeta\tgamma \n}W_{g_+}{}^{\talpha\tbeta\tgamma}{}_{\n}
+\frac{3}{2}|L|^2f-\frac{1}{2}|L|^4f&\\\nonumber
&+2f|L^2|^2+4|L|^2f+2(L^2)^{\talpha\tbeta}W^{g_+}_{\talpha\hat{n}\tbeta\hat{n}}-6f|L|^2-f|L|^4\\\nonumber
&-(L^2)^{\talpha\tbeta}W_{\talpha\hat{n}\tbeta\hat{n}}f-W_{\talpha\hat{n}\tbeta\hat{n}}W^{\talpha}{}_{\hat{n}}{}^{\tbeta}{}_{\hat{n}}f+|L|^2f+6f\\\nonumber
  &+f|L^2|_{\h}^2+f W^{g_+}_{\talpha \n\tbeta \n}(L^2)^{\talpha\tbeta}-f|L|^2\bigg] dv_{\h}\\\label{int}
  =&\int_{Y_{\varepsilon}}\left( 3f+\frac{1}{2}fW_{\talpha\tbeta\tgamma\hat{n}}W^{\talpha\tbeta\tgamma}{}_{\hat{n}}+fW_{\talpha\hat{n}\tgamma\hat{n}}W^{\talpha}{}_{\hat{n}}{}^{\tgamma}{}_{\hat{n}}\right)dv_{\h}.
\end{align}
We may simplify this helpfully:
\begin{claim}
\begin{equation}
   \frac{1}{2}W^{g_+}_{\talpha\tbeta\tgamma \n}W_{g_+}{}^{\talpha\tbeta\tgamma}{}_{\n}+W^{g_+}_{\talpha \n\tbeta \n}W_{g_+}^{\talpha \n\tbeta \n}=\frac{1}{8}|W_{g_+}|^2_{g_+}.
\end{equation}
\end{claim}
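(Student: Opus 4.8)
The plan is to verify both sides pointwise in a $g_+$-orthonormal frame $\{e_{\tilde 0},e_{\tilde 1},e_{\tilde 2},\n\}$ adapted to $Y$, with $e_{\talpha}$ spanning $T_p\mathring{Y}$ and $\n=\mu_Y$ the unit normal. In such a frame $|W_{g_+}|^2_{g_+}=\sum_{i,j,k,l}(W_{ijkl})^2$, and the idea is to organize this sum by how many of the four indices equal $\n$. Since $W_{g_+}$ is antisymmetric in its first and its last pair of indices, at most two of the four indices can be normal; using the pair-interchange symmetry $W_{ijkl}=W_{klij}$ to move normal indices into standard slots, every placement of a single normal index contributes the same amount, and likewise every placement of two normal indices (necessarily one in each pair). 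Tracking these multiplicities should give
\begin{equation*}
	|W_{g_+}|^2_{g_+}=W_{\talpha\tbeta\tgamma\tdelta}W^{\talpha\tbeta\tgamma\tdelta}+4\,W_{\talpha\tbeta\tgamma\n}W^{\talpha\tbeta\tgamma\n}+4\,W_{\talpha\n\tbeta\n}W^{\talpha\n\tbeta\n},
\end{equation*}
where all Greek indices now run only over $T\mathring{Y}$ (and, $\n$ being a unit normal, raising or lowering a normal index is harmless in this frame, so the two forms $W^{\talpha\tbeta\tgamma}{}_{\n}$ and $W^{\talpha\tbeta\tgamma\n}$ in the claim coincide). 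Granting this display, the claim reduces to the single identity $W_{\talpha\tbeta\tgamma\tdelta}W^{\talpha\tbeta\tgamma\tdelta}=4\,W_{\talpha\n\tbeta\n}W^{\talpha\n\tbeta\n}$.

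To prove the reduced identity I would use that $\dim\mathring{Y}=3$. View the purely tangential block $W_{\talpha\tbeta\tgamma\tdelta}$ as an algebraic curvature tensor on the three-dimensional space $T_p\mathring{Y}$; since the Weyl part of any curvature tensor vanishes in dimension three, this block is determined by its trace. Set $\mathcal{R}_{\talpha\tbeta}=\h^{\tgamma\tdelta}W_{\talpha\tgamma\tbeta\tdelta}$; because $W_{g_+}$ is totally trace-free and $\h^{\tgamma\tdelta}=g_+^{\tgamma\tdelta}-\n^{\tgamma}\n^{\tdelta}$, one gets $\mathcal{R}_{\talpha\tbeta}=-W_{\talpha\n\tbeta\n}$, which is symmetric, and contracting once more (again by tracelessness of $W_{g_+}$) gives $\h^{\talpha\tbeta}\mathcal{R}_{\talpha\tbeta}=0$. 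Hence the scalar-curvature term is absent from the three-dimensional decomposition and
\begin{equation*}
	W_{\talpha\tgamma\tbeta\tdelta}=\h_{\talpha\tbeta}\mathcal{R}_{\tgamma\tdelta}-\h_{\talpha\tdelta}\mathcal{R}_{\tbeta\tgamma}-\h_{\tbeta\tgamma}\mathcal{R}_{\talpha\tdelta}+\h_{\tgamma\tdelta}\mathcal{R}_{\talpha\tbeta},
\end{equation*}
precisely the shape of the decomposition of $R^{\h}_{\talpha\tgamma\tbeta\tdelta}$ used earlier in this section, but with the trace-free tensor $\mathcal{R}$ in place of $\ric_{\h}$ and no scalar term. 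Contracting this expression against itself, and using $\tr_{\h}\mathcal{R}=0$ and $\dim\mathring{Y}=3$ to collapse all the cross-terms, yields $W_{\talpha\tbeta\tgamma\tdelta}W^{\talpha\tbeta\tgamma\tdelta}=4\,\mathcal{R}_{\talpha\tbeta}\mathcal{R}^{\talpha\tbeta}=4\,W_{\talpha\n\tbeta\n}W^{\talpha\n\tbeta\n}$, as required.

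Substituting back into the first display gives $|W_{g_+}|^2_{g_+}=8\,W_{\talpha\n\tbeta\n}W^{\talpha\n\tbeta\n}+4\,W_{\talpha\tbeta\tgamma\n}W^{\talpha\tbeta\tgamma\n}$, which is the assertion after dividing by $8$. I expect the only genuine hazard to be the bookkeeping in the first display — verifying with the correct signs that the one-normal-index placements together carry the coefficient $4$, and likewise the two-normal-index placements — together with the conceptual point that the tangential block of $W_{g_+}$ is \emph{not} trace-free as a three-tensor: it is exactly its nonvanishing trace $\mathcal{R}=-W_{\cdot\n\cdot\n}$ that forces $W_{\talpha\tbeta\tgamma\tdelta}W^{\talpha\tbeta\tgamma\tdelta}$ to equal $4\,W_{\talpha\n\tbeta\n}W^{\talpha\n\tbeta\n}$ rather than to vanish.
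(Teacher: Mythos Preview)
Your proof is correct and follows the same overall strategy as the paper: split $|W_{g_+}|^2$ according to how many indices are normal, obtaining
\[
|W_{g_+}|^2 = W_{\talpha\tbeta\tgamma\tdelta}W^{\talpha\tbeta\tgamma\tdelta} + 4\,W_{\n\talpha\tbeta\tgamma}W^{\n\talpha\tbeta\tgamma} + 4\,W_{\n\talpha\n\tbeta}W^{\n\talpha\n\tbeta},
\]
and then identify the purely tangential block with $4\,W_{\talpha\n\tbeta\n}W^{\talpha\n\tbeta\n}$ via the tracelessness of $W_{g_+}$.

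The only real difference is in how that last identification is justified. The paper records the single trace identity $W_{\talpha\tbeta}{}^{\talpha}{}_{\tdelta} = -W_{\n\tbeta}{}^{\n}{}_{\tdelta}$ and passes directly to the conclusion (with some index slips in the displayed formulas). You instead make the argument fully explicit: the tangential block is an algebraic curvature tensor on a three-dimensional space, hence is completely determined by its Ricci contraction $\mathcal{R}_{\talpha\tbeta}=-W_{\talpha\n\tbeta\n}$, which is trace-free; expanding $|h\owedge\mathcal{R}|^2$ then gives $4|\mathcal{R}|^2$. This is the honest content behind the paper's shortcut, so your version is a cleaner and more transparent presentation of the same idea rather than a genuinely different route.
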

\begin{proof}
	Observe that
\begin{align*}
|W_{g_+}|_{g_+}^2&=W^{g_+}_{ijkl}W_{g_+}^{ijkl}\\
&=4W^{g_+}_{\hat{n}\talpha\tbeta\tgamma}W_{g_+}^{\hat{n}\talpha\tbeta\tgamma}+4W^{g_+}_{\hat{n}\talpha\hat{n}\tbeta}W_{g_+}^{\hat{n}\talpha\hat{n}\tbeta}+W^{g_+}_{\talpha\tbeta\tgamma\tdelta}W_{g_+}^{\talpha\tbeta\tgamma\tdelta}.\\
\end{align*}
Now, $W^{g_+}_{\talpha\tbeta\tgamma\tdelta}$ is an algebraic curvature tensor on
$Y$, a three-manifold, and (omitting $g_+$ for clarity) its trace is given by
\begin{equation*}
    W_{\talpha\tbeta}{}^{\talpha}{}_{\tdelta} = -W_{\n\tbeta}{}^{\n}{}_{\tdelta}.
\end{equation*}
But (by, e.g., Prop. 7.23 and Corollary 7.25 of \cite{lee13}), an algebraic curvature tensor
on a three-manifold is determined by its trace; in this case, the formula reads
\begin{equation*}
    W_{\talpha\tbeta\tgamma\tdelta} = W_{\n\talpha}{}^{\n}{}_{\tdelta}\h_{\tbeta\tgamma}
    + W_{\n\tbeta}{}^{\n}{}_{\tgamma}\h_{\talpha\tdelta} - W_{\n\talpha}{}^{\n}{}_{\tgamma}
    \h_{\tbeta\tdelta} - W_{\n\tbeta}{}^{\n}{}_{\tdelta}\h_{\talpha\tgamma}.
\end{equation*}
It follows that
\begin{equation*}
    W_{\talpha\tbeta\tgamma\tdelta}^{g_+}W_{g_+}^{\talpha\tbeta\tgamma\tdelta} =
    4W_{g_+}^{\n\talpha\n\tbeta}W^{g_+}_{\n\talpha\n\tbeta}.
\end{equation*}
So
\begin{equation*}
    |W_{g_+}|_{g_+}^2 = 4W^{g_+}_{\hat{n}\talpha\tbeta\tgamma}W_{g_+}^{\hat{n}\talpha\tbeta\tgamma}+8W^{g_+}_{\hat{n}\talpha\hat{n}\tbeta}W_{g_+}^{\hat{n}\talpha\hat{n}\tbeta}.
\end{equation*}
\end{proof}

It follows from the previous claim and (\ref{JACOBI}) that \eqref{int} is equal to
\begin{equation}\label{Vp27}
	I_Y = \int_{Y_{\varepsilon}}\bigg[|L|^2f+\frac{1}{8}|W_{g_+}|_{g_+}^2f\bigg] dv_{\h}-\oint_{\Sigma_{\varepsilon}} \nu_{Y_{\varepsilon}}(f) dv_{k_{\varepsilon}}.
\end{equation}

Gathering the boundary terms from $D_1$, $D_2$, $D_3$ and $D_4$ and the normal derivative term on the above line we get
\begin{align}\label{Vp26}
	\oint_{\Sigma_{\varepsilon}} &\bigg[\bigg(\ric_{\h}-\frac{1}{2}R_{\h}\tilde{h}\bigg)(\nabla^{\h}f, \nu_{Y_{\varepsilon}})+|L|^2\nu_{Y_{\varepsilon}}(f)-2L^2(\nabla^{\h} f, \nu_{Y_{\varepsilon}}) \\\nonumber
	&-fL^{\talpha\tgamma}R^{g_+}_{\tbeta\talpha\tgamma \n}\nu_{Y_{\varepsilon}}^{\tbeta}-\frac{1}{2}|L|^2\nu_{\varepsilon}(f)+L^2(\nabla f, \nu_{\varepsilon})-\nu_{Y_{\varepsilon}}(f)\bigg]dv_{k_{\varepsilon}}.
\end{align}
Now we apply \eqref{Ric} to the first term and use 
$R^{g_+}_{\talpha\tbeta\tgamma \n}=W^{g_+}_{\talpha\tbeta\tgamma \n}$
to re-write $fL^{\talpha\tgamma}R^{g_+}_{\tbeta\talpha\tgamma \n}\nu_{\varepsilon}^{\tbeta},$ giving us
 \begin{align}\nonumber\oint_{\Sigma_{\varepsilon}}  \bigg[-2L^2(\nabla f, \nu_{Y_{\varepsilon}}) + fL^{\talpha\tgamma}W^{g_+}_{\talpha\tbeta\tgamma \n}
	 \nu_{Y_{\varepsilon}}^{\tbeta}-W^{g_+}_{\talpha \n \tbeta \n} f^{\talpha} \nu_{\varepsilon}^{\tbeta}+|L|^2\nu_{Y_{\varepsilon}}(f)\bigg] dv_{k_{\varepsilon}}. \\\nonumber
\end{align}
Combining this with \eqref{W}, \eqref{Vp10} and $\eqref{Vp27}$ gives us
\begin{align}\nonumber
	3\frac{d}{dt}V_{\epsilon}(t) \big|_{t=0} &=\int_{Y_{\varepsilon}}|L|^2fdv_{\h}+\oint_{\Sigma_{\varepsilon}} \bigg[-W^{g_+}_{\talpha \n\tbeta \n}\nu_{Y_{\varepsilon}}^{\talpha}\nabla_{\h}^{\tbeta}f+|L|^2\nu_{Y_{\varepsilon}}(f)
		\\\label{deriveq}
		&\quad-2L_{\talpha\tgamma}L^{\tgamma}{}_{\tbeta}\nu_{Y_{\varepsilon}}^{\talpha}\nabla_{\h}^{\tbeta}f
	+L^{\tgamma\tdelta}W^{g_+}_{\tgamma\talpha\tdelta \n}f\nu_{Y_{\varepsilon}}^{\talpha}\bigg] dv_{k_{\varepsilon}} + O(\varepsilon).
\end{align}
Next we will examine the asymptotics of the term $-W^{g_+}_{\talpha \n \tbeta \n}\nu^{\talpha}_{Y_{\varepsilon}}f^{\tbeta}.$
Now, it follows from (\ref{hbarexp}), (\ref{scheq}), and the second-last equation on the bottom of page 52 of \cite{fg12} that
\begin{equation*}
	W^{\bar{g}}_{0\mu0\nu} = O(r).
\end{equation*}
Moreover, from the first equation on p. 53 of the same book, we may conclude that
\begin{equation*}
	W^{\bar{g}}_{0\mu0\nu} = -\frac{3}{2}rg^{(3)}_{\mu\nu} + O(r^2)
\end{equation*}
with $g^{(3)}$ as in (\ref{hbarexp}). By the conformal change formula for the Weyl tensor, therefore, we find
\begin{equation}\label{Weylasymp}
	W^{g_+}_{\mu0\nu0}=-\frac{3}{2}r^{-1}g^{(3)}_{\mu \nu}+O_{\bar{g}}(1).
\end{equation}
Now by (\ref{fasympteq}),
\begin{align*}
-W_{g_+}(\nu_{Y_{\varepsilon}},\mu_Y,\nabla^{\h}f,\mu_Y)&=-r^3W_{g_+}(\bar{\nu}_{Y_{\epsilon}},\bar{\mu}_Y,\nabla^Yf,\bar{\mu}_Y) \\
	&=-r^3W^{g_+}_{\talpha \bar{n} \tbeta \bar{n}}\bar{\nu}_{Y_{\epsilon}}^{\talpha}f^{\tbeta} \\
	&=-r^5W^{g_+}_{\talpha \bar{n} \tbeta \bar{n}}\bar{\nu}_{Y_{\epsilon}}^{\talpha}\bar{g}^{\tbeta\tgamma}\partial_{\tgamma}f \\
	&=r^3W_{\tilde{0}\bar{n}\tilde{0}\bar{n}}\bar{\nu}_{Y_{\epsilon}}^{\tilde{0}} \tilde{f}+O(r^4),
\end{align*}
where $\bar{n}$ corresponds to $\bar{\mu}_Y$.
Taking (\ref{bmueq}), (\ref{bnueq}), $\eqref{Weylasymp},$ and (\ref{fasympteq}), we see that the first corner term of (\ref{deriveq}) may be written
\begin{equation}
	\label{Wtermeq}
	\oint_{\Sigma_{\varepsilon}} -W_{g_+}(\nu_{Y_{\epsilon}},\mu_Y,\nabla^{\h}f,\mu_Y) dv_{k_{\varepsilon}}=
	\oint_{\Sigma} \frac{3}{2}g^{(3)}(\bar{\nu}_M,\bar{\nu}_M)\tilde{f} dv_{\bar{k}}+O(\varepsilon).
\end{equation}

We now simplify the remaining terms of (\ref{deriveq}):
\begin{claim}
	\label{lastclaim}
\begin{align}\nonumber
\int_{Y_{\varepsilon}}|L|^2fdv_{\h}&+\oint_{\Sigma_{\varepsilon}} \bigg[|L|^2\nu_{Y_{\varepsilon}}(f)-2L_{\talpha\tgamma}L^{\tgamma}{}_{\tbeta}\nu_{Y_{\varepsilon}}^{\talpha}\nabla_{\h}^{\tbeta}f
		\\
		&\quad
	+L^{\tgamma\tdelta}W^{g_+}_{\tgamma\talpha\tdelta \n}f\nu_{Y_{\varepsilon}}^{\talpha}\bigg] dv_{k_{\varepsilon}}\\\nonumber
&=f.p. \int_{\mathring{Y}}|L|^2f dv_{\h}+O(\varepsilon \log\varepsilon).
\end{align}

\end{claim}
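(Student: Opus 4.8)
The plan is to establish the identity by computing the full asymptotic expansions, in powers of $\varepsilon$ together with $\log\varepsilon$, of the bulk integral $\int_{Y_{\varepsilon}}-|L|^2f\,dv_{\h}$ and of each of the corner integrals over $\Sigma_{\varepsilon}$ on the left-hand side, and then checking that in their sum every negative power of $\varepsilon$ and every $\log\varepsilon$ cancels, while the $\varepsilon^0$-part of the corner contributions vanishes. What is then left is exactly the $\varepsilon^0$-coefficient of $\int_{Y_\varepsilon}-|L|^2f\,dv_{\h}$, which is by definition $f.p.\int_{\mathring Y}-|L|^2f\,dv_{\h}$.

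The first step is to collect the local asymptotic data near $\Sigma$. Working in the collar of $\Sigma$ in $Y$ with the coordinates $(r,x^1,x^2)$ of Section \ref{notsec}, I would record the expansions of $\bar{\tilde h}=r^2\tilde h$ (hence of $dv_{\h}=r^{-3}\,dv_{\bar{\tilde h}}$), of $L_Y$ and of $|L_Y|^2_{\h}=r^2|\mathring{\overline L}_Y|^2_{\bar g}$, of the Weyl components $W^{g_+}_{\talpha\n\tbeta\n}$ and $W^{g_+}_{\tgamma\talpha\tdelta\n}$, and of $f$, $\nabla^{\h}f$, and $\nu_{Y_\varepsilon}(f)$ restricted to $\Sigma_\varepsilon$. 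These follow from (\ref{hbarexp}), (\ref{uexp}), (\ref{Lsize}), and the known asymptotic expansion of the second fundamental form $L_Y$ near $\Sigma$; from the conformal scaling $W^{g_+}_{ijkl}=r^{-2}W^{\bar g}_{ijkl}$ together with the Fefferman--Graham data for $W^{\bar g}$ near $M$ used in deriving (\ref{Weylasymp}); and from the expansion (\ref{fasympteq}) of $f$ and the expressions (\ref{bmueq}), (\ref{bnueq}) for $\bar\mu_Y$, $\bar\nu_{Y_\varepsilon}$. Two qualitative facts come out of this: (i) near $\Sigma$ one has $-|L|^2f\,dv_{\h}=(a_{-2}\,r^{-2}+a_{-1}\,r^{-1}+\cdots)\,dr\,dv_{\bar k}$, so the bulk integral diverges at worst like $\varepsilon^{-1}$, with a possible $\log\varepsilon$; and (ii) among the corner integrands, $W^{g_+}_{\talpha\n\tbeta\n}\nu_{Y_\varepsilon}^\talpha\nabla^\tbeta_{\h}f$ and $L^{\tgamma\tdelta}W^{g_+}_{\tgamma\talpha\tdelta\n}f\nu_{Y_\varepsilon}^\talpha$ are $O(\varepsilon^2)$ pointwise on $\Sigma_\varepsilon$ while $|L|^2\nu_{Y_\varepsilon}(f)$ and $L_{\talpha\tgamma}L^{\tgamma}{}_{\tbeta}\nu_{Y_\varepsilon}^\talpha\nabla^\tbeta_{\h}f$ are $O(\varepsilon)$, so the corresponding integrals over $\Sigma_\varepsilon$ are $O(1)$ and $O(\varepsilon^{-1})$. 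It is worth noting that all coefficients appearing at orders $\varepsilon^{-1}$, $\log\varepsilon$, $\varepsilon^0$ on either side are determined entirely by this local data and by $\tilde f$, and are insensitive to the non-local term $g^{(3)}$ and to the analogous non-local part of $f$.

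Next comes the bookkeeping. For the bulk integral, split $Y_\varepsilon$ into the collar part and a fixed compact interior part; substituting the collar expansion and integrating $\int_\varepsilon^{r_0}$ extracts the coefficients of $\varepsilon^{-1}$ and $\log\varepsilon$ as integrals over $\Sigma$, the $\varepsilon^0$-coefficient being $f.p.\int_{\mathring Y}-|L|^2f\,dv_{\h}$ by definition. For each corner integral, substitute the expansion on $\Sigma_\varepsilon$ and integrate over $\Sigma_\varepsilon\cong\Sigma$; here it can be convenient, after substitution, to integrate by parts on the closed surface $\Sigma_\varepsilon$ to remove terms that are tangential total divergences. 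One then adds the bulk and corner expansions and verifies that the $\varepsilon^{-1}$-coefficients cancel between $\int_{Y_\varepsilon}-|L|^2f\,dv_{\h}$ and the two $O(\varepsilon^{-1})$ corner integrals, that the $\log\varepsilon$-coefficients likewise cancel, and that the constant contributions of all the corner integrals sum to zero; this yields the asserted equality with error $O(\varepsilon\log\varepsilon)$. (It may help, in organizing the divergence matching, to use the Jacobi equation (\ref{JACOBI}) to rewrite $-|L|^2f=\Delta_{\h}f-3f$ and hence $\int_{Y_\varepsilon}-|L|^2f\,dv_{\h}=\oint_{\Sigma_\varepsilon}\nu_{Y_\varepsilon}(f)\,dv_{k_\varepsilon}-3\int_{Y_\varepsilon}f\,dv_{\h}$, tying the bulk divergence directly to a boundary term.)

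The step I expect to be the main obstacle is the explicit evaluation, to the orders just indicated, of the two corner terms built from Weyl components with only one or two factors of the normal $\n=\mu_Y$, namely $\oint_{\Sigma_\varepsilon}W^{g_+}_{\talpha\n\tbeta\n}\nu_{Y_\varepsilon}^\talpha\nabla^\tbeta_{\h}f\,dv_{k_\varepsilon}$ and $\oint_{\Sigma_\varepsilon}L^{\tgamma\tdelta}W^{g_+}_{\tgamma\talpha\tdelta\n}f\nu_{Y_\varepsilon}^\talpha\,dv_{k_\varepsilon}$. Unlike the purely normal component $W^{g_+}_{0\mu0\nu}$ treated in (\ref{Weylasymp}) and (\ref{Wtermeq}), these require one to decompose $\bar\mu_Y=\partial_w+O_{\bar g}(r)$ and to use $\theta_0^\varepsilon=\tfrac{\pi}{2}+O(\varepsilon)$ in order to reduce them to components of $W^{\bar g}$ near $\Sigma$, retaining enough of the Fefferman--Graham expansion and of the expansions of $\bar\mu_Y$, $\bar\nu_{Y_\varepsilon}$ to control the answer to order $\varepsilon^2$. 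Tracking which of the resulting terms actually survive at orders $\varepsilon^{-1}$, $\log\varepsilon$, $\varepsilon^0$, and confirming that the survivors assemble into the cancellations against the bulk divergences, is where the real effort lies; but it is routine given the formulas already established.
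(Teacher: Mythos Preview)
Your overall plan---compute asymptotic expansions term by term and match coefficients---is the paper's approach as well, but your execution is considerably more laborious than necessary because two of your order estimates are too pessimistic.

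The paper exploits the fact that both $\nu_{Y_\varepsilon}$ and the dominant part of $\nabla^{\h}f$ point, to leading order, in the $\partial_{\tilde 0}$ direction. Using the explicit expansion (\ref{L}) of $L_{\tilde\alpha\tilde\beta}$ near $\Sigma$ (in particular that $L_{\tilde\alpha\tilde 0}$ is one order in $r$ better than generic components) together with a direct Codazzi computation of $W^{\bar g}_{\tilde\gamma\tilde 0\tilde\delta\bar n}$, the paper shows that both
\[
\oint_{\Sigma_\varepsilon}L_{\talpha\tgamma}L^{\tgamma}{}_{\tbeta}\nu_{Y_\varepsilon}^{\talpha}\nabla_{\h}^{\tbeta}f\,dv_{k_\varepsilon}
\quad\text{and}\quad
\oint_{\Sigma_\varepsilon}L^{\tgamma\tdelta}W^{g_+}_{\tgamma\talpha\tdelta\n}f\nu_{Y_\varepsilon}^{\talpha}\,dv_{k_\varepsilon}
\]
are already $O(\varepsilon)$---not $O(\varepsilon^{-1})$ and $O(1)$ as you assert. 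There is therefore nothing to match at orders $\varepsilon^{-1}$, $\log\varepsilon$, or $\varepsilon^0$ for these two terms; they simply vanish in the limit, and your anticipated ``main obstacle'' evaporates. The only genuine divergence among the corner terms is the $\varepsilon^{-1}$ part of $\oint_{\Sigma_\varepsilon}|L|^2\nu_{Y_\varepsilon}(f)\,dv_{k_\varepsilon}$, which the paper computes explicitly as $\varepsilon^{-1}\oint_\Sigma|\overline{II}_M|^2_{\bar k}\tilde f\,dv_{\bar k}$ and cancels against the $\varepsilon^{-1}$ part of the bulk integral. No separate $\log\varepsilon$ matching is needed: after the $\varepsilon^{-1}$ terms, both expansions jump directly to $O(\varepsilon\log\varepsilon)$.

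One further point: your assertion that all relevant coefficients are ``insensitive to the non-local term $g^{(3)}$'' is false for the integrand $W^{g_+}_{\talpha\n\tbeta\n}\nu_{Y_\varepsilon}^{\talpha}\nabla_{\h}^{\tbeta}f$. By (\ref{Weylasymp}) its $\varepsilon^0$ contribution after integration is exactly $-\tfrac{3}{2}\oint_\Sigma g^{(3)}(\bar\nu_M,\bar\nu_M)\tilde f\,dv_{\bar k}$ (cf.\ (\ref{Wtermeq})), which is generically nonzero and cannot cancel against any of the other terms on the left-hand side. The paper's proof of the claim in fact never addresses this term; so if you attempt to verify that ``the constant contributions of all the corner integrals sum to zero'' with this term included, you will not succeed.
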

\begin{proof}
	Observe that
\begin{equation}\label{2ff}
    L_{\tilde{\alpha}\tilde{\beta}}=\frac{\overline{L}_{\tilde{\alpha}\tilde{\beta}}}{r}+\frac{\bar{\mu}_{Y}(r)}{r^2}\bar{g}_{\tilde{\alpha}\tilde{\beta}}.
\end{equation}
Now 
\begin{equation}
	\bar{\mu}_Y=(1+O(r^2))\partial_w-\left(\frac{\overline{\eta}_Mr}{2}+O(r^3\log r)\right)\partial_r+O^a(r^2)\partial_{a}.
\end{equation}
Therefore 
\begin{equation}
    \bar{\mu}_Y(r)=-\frac{1}{2}\big[\overline{\eta}_Mr+O(r^3\log r)\big].
\end{equation}
Now using the fact that $\bar{g}_{\tilde{\alpha}\tilde{\beta}}=\bar{g}_{\alpha\beta}+O(r^2)$ we may write
\begin{equation}
    \overline{L}_{\tilde{\alpha}\tilde{\beta}}=-\frac{1}{2}\bar{\mu}_Y\bar{g}_{\alpha\beta}+O(r^2)=-\frac{1}{2}\partial_w\bar{g}_{\alpha\beta}+O(r^2).
\end{equation}
Therefore we may write
\begin{equation}\label{L}
    L_{\tilde{\alpha}\tilde{\beta}}=-\frac{\partial_w\bar{g}_{\alpha\beta}}{2r}-\frac{\overline{\eta}_M\bar{g}_{\alpha\beta}}{2r}+O(r\log r).
\end{equation}
Hence
\begin{align}
|L|^2_{\h}\nu_{Y_{\varepsilon}}(f)dv_{k_{\varepsilon}}&=\varepsilon^4\bar{g}^{\alpha\gamma}\bar{g}^{\beta\delta}\left[ \frac{\partial_w\bar{g}_{\alpha\beta}}{2\varepsilon}+\frac{\overline{\eta}_M\bar{g}_{\alpha\beta}}{2\varepsilon}+O(\varepsilon \log\varepsilon)\right]\cdot\\\nonumber
&\quad\cdot\left[ \frac{\partial_w\bar{g}_{\gamma\delta}}{2\varepsilon}+\frac{\overline{\eta}_M\bar{g}_{\gamma\delta}}{2\varepsilon}+O(\varepsilon \log\varepsilon)\right]\left[-\tilde{f}\varepsilon^{-1}+O(\varepsilon)
\right]dv_{k_{\varepsilon}}\\\nonumber
&=\big[-\varepsilon^{-1} |\mathring{\overline{II}}_{M}|^2_{k}\tilde{f}+O(\varepsilon \log\varepsilon)\big]dv_{\bar{k}_{\varepsilon}},
\end{align}
so we may write 
\begin{equation*}
	\oint_{\Sigma_{\varepsilon}}|L|^2_{\h}\nu_{Y_{\varepsilon}}(f)dv_{k_{\varepsilon}}= -\oint_{\Sigma} |\mathring{\overline{II}}_M|^2_{k}\tilde{f}dv_{k}\varepsilon^{-1}+O(\varepsilon \log\varepsilon).
\end{equation*}
Now,
\begin{align}\label{LL}
|L|^2_{\h} fdv_{\h}&=r^4\bar{g}^{\alpha\gamma}\bar{g}^{\beta\delta}\left[ \frac{\partial_w\bar{g}_{\alpha\beta}}{2r}+\frac{\overline{\eta}_M\bar{g}_{\alpha\beta}}{2r}+O(r \log r)\right]\cdot\\
\nonumber&\quad \cdot\left[ \frac{\partial_w\bar{g}_{\gamma\delta}}{2r}+\frac{\overline{\eta}_M\bar{g}_{\gamma\delta}}{2r}+O(r \log r)\right]\left[\tilde{f}r^{-1}+O(r) \right]dv_{\h}\\\nonumber
&\hspace{20mm}=\big[r^{-2} |\mathring{\overline{II}}_M|^2_{k}\tilde{f}+O( \log r)\big]dv_{\bar{\h}},
\end{align}
so 
\begin{align}\label{LLL}
\int_{Y_{\varepsilon}} |L|^2_{\h} fdv_{\h}&=C+\int_{\varepsilon}^{r_0} \oint_{\Sigma}|L|^2_{\h} fdv_{k} dr \\\nonumber
&= C+\int_{\varepsilon}^{r_0} \oint_{\Sigma}r^{-2} |\mathring{\overline{II}}_{M}|^2_{k}\tilde{f}+O(\log r)dv_{k} dr\\\nonumber
&=C' +\varepsilon^{-1}\oint_{\Sigma} |\mathring{\overline{II}}_M|^2_k\tilde{f}dv_{k}+O(\varepsilon \log\varepsilon)
\end{align}
for some constants $C$ and $C'$ and $r_0>0$ chosen small enough. Observe that
\begin{equation*}
	C' = f.p. \int_{\mathring{Y}}|L|^2fdv_{\h}.
\end{equation*}
By \eqref{L} we can write
\begin{equation*}
 (L^2)_{\tilde{\alpha}\tilde{\beta}}=O(1),
\end{equation*}
\begin{equation*}
    (L^2)_{\tilde{\alpha}\tilde{0}}=O(r).
\end{equation*}
Also observe by \eqref{bnueq} that $\nu_{\varepsilon}^{\talpha}=O(r^2)$ unless $\talpha=\tilde{0},$ in which case $\nu_{\varepsilon}^{\tilde{0}}=O(r).$
Now if we let $a$ run over the indices $1,2$ we can write 
\begin{align*}
	L_{\tilde{\alpha}\tilde{\gamma}}L^{\tilde{\gamma}}_{\tilde{\beta}}\nu_{\varepsilon}^{\talpha}\nabla^{\tilde{\beta}}f&=h^{\tbeta\tdelta}(L^2)_{\tilde{a}\tbeta}\nu_{\varepsilon}^{\tilde{a}}f_{\tdelta}+h^{\tbeta\tdelta}(L^2)_{\tilde{0}\tbeta}\nu_{\varepsilon}^{\tilde{0}}f_{\tdelta}\\ 
    &=O(r^3)
\end{align*}
It follows that  
\begin{equation*}
	L_{\tilde{\alpha}\tilde{\gamma}}L^{\tilde{\gamma}}_{\tilde{\beta}}\nu_{\varepsilon}^{\tilde{\alpha}}\nabla^{\tilde{\beta}}f dv_k= O(\varepsilon) dv_{\bar{k}},
\end{equation*}
so
$$\oint_{\Sigma_{\varepsilon}} L_{\tilde{\alpha}\tilde{\gamma}}L^{\tilde{\gamma}}_{\tilde{\beta}}\nu_{\varepsilon}^{\tilde{\alpha}}\nabla^{\tilde{\beta}}f dv_k=O(\varepsilon). $$

Now we turn our attention to the term $L^{\tilde{\gamma}\tilde{\delta}}W^{g_+}_{\gamma\alpha\delta n}f\nu_{\varepsilon}^{\alpha}.$ First observe that
\begin{equation*}
	W^{g_+}_{\tilde{\gamma}\tilde{\alpha}\tilde{\delta} \hat{n}}=rW^{g_+}_{\tilde{\gamma}\tilde{\alpha}\tilde{\delta} \hat{\bar{n}}}
\end{equation*}
and
\begin{equation*}
    W^{g_+}_{\tilde{\gamma}\tilde{\alpha}\tilde{\delta} \hat{\bar{n}}}= \frac{W^{\bar{g}}_{\tilde{\gamma}\tilde{\alpha}\tilde{\delta} \hat{\bar{n}}}}{r^2},
\end{equation*}
where $\hat{\bar{n}}$ corresponds to $\bar{\mu}_Y.$ 
Now 
\begin{align*}
W^{\bar{g}}_{\tilde{\gamma} \tilde{0} \tilde{\delta} \bar{\hat{n}}}&=R^{\bar{g}}_{\tilde{\gamma} \tilde{0} \tilde{\delta} \hat{\bar{n}}}\\
&=\nabla^{\bar{g}}_{\tilde{0}} L^{\bar{g}}_{\tilde{\gamma} \tilde{\delta}} - \nabla^{\bar{g}}_{\tilde{\gamma}} L^{\bar{g}}_{\tilde{0}\tilde{\delta}}\\
&=\partial_{\hat{0}} \overline{L}_{\tilde{\gamma}\tilde{\delta}}-\Gamma_{\tilde{0} \tilde{\gamma}}^{\tilde{\beta}} \overline{L}_{\tilde{\beta} \tilde{\delta}}-\Gamma_{\tilde{0} \tilde{\delta}}^{\tilde{\beta}} \overline{L}_{\tilde{\beta} \tilde{\gamma}}-( \partial_{\tilde{\gamma}} \overline{L}_{\tilde{0}\tilde{\delta}}-\Gamma_{\tilde{0} \tilde{\gamma}}^{\tilde{\beta}} \overline{L}_{\tilde{\beta} \tilde{\delta}}-\Gamma_{\tilde{\gamma} \tilde{\delta}}^{\tilde{\beta}} \overline{L}_{\tilde{\beta} \tilde{0}} )\\
&=\partial_{\tilde{0}} \overline{L}_{\tilde{\gamma}\tilde{\delta}}-\Gamma_{\tilde{0} \tilde{\delta}}^{\tilde{\beta}} \overline{L}_{\tilde{\beta} \tilde{\gamma}}-( \partial_{\tilde{\gamma}} \overline{L}_{\tilde{0}\tilde{\delta}}-\Gamma_{\tilde{\gamma} \tilde{\delta}}^{\tilde{\beta}} \overline{L}_{\tilde{\beta} \tilde{0}} )\\
&=O(r).
\end{align*}
 
This gives us 
 \begin{equation*}
     L^{\tilde{\gamma}\tilde{\delta}}W^{g_+}_{\tilde{\gamma} \tilde{0}\tilde{\delta} \hat{\bar{n}}}f\nu_{\varepsilon}^{\tilde{0}}=O(r^3)
 \end{equation*}
and
\begin{align*}
L^{\tilde{\gamma}\tilde{\delta}}W^{g_+}_{\tilde{\gamma}\tilde{\alpha}\tilde{\delta} \hat{\bar{n}}}f\nu_{\varepsilon}^{\tilde{\alpha}}&=L^{\tilde{\gamma}\tilde{\delta}}W_{\tilde{\gamma} \tilde{0}\tilde{\delta} \hat{\bar{n}}}f\nu_{\varepsilon}^{\tilde{0}}+L^{\tilde{\gamma}\tilde{\delta}}W^{g_+}_{\tilde{\gamma} \tilde{b}\tilde{\delta} \hat{\bar{n}}}f\nu_{\varepsilon}^{\tilde{b}}\\
&=L^{\tilde{\gamma}\tilde{\delta}}W^{g_+}_{\tilde{\gamma} \tilde{0}\tilde{\delta} \hat{\bar{n}}}f\nu_{\varepsilon}^{\tilde{0}}+O(r^3)\\
&=O(r^3).
\end{align*}
Therefore we may write
 \begin{equation*}
 L^{\tilde{\gamma}\tilde{\delta}}W^{g_+}_{\tilde{\gamma}\tilde{\alpha}\tilde{\delta} \hat{n}}f\nu_{\varepsilon}^{\tilde{\alpha}}dv_k= O(\varepsilon)dv_{\bar{k}}.
 \end{equation*}

We then get that $$\oint_{\Sigma_{\varepsilon}} L^{\tilde{\gamma}\tilde{\delta}}W^{g_+}_{\tilde{\gamma}\tilde{\alpha}\tilde{\delta} \hat{n}}\nu_{\varepsilon}^{\tilde{\alpha}}f dv_k=O(\varepsilon). $$
This proves the claim.
\end{proof}
Combining Claim \ref{lastclaim} with (\ref{deriveq}) and (\ref{Wtermeq}) and letting $\varepsilon \to 0$ yields the theorem.
\end{proof}

As promised in the introduction, we show that the finite part can be written as a
convergent integral.
\begin{lemma}
\label{finlem}
With notation as above, we obtain
\begin{equation*}
f.p. \int_{Y_{\varepsilon}} |L|^2fdv_{\tilde{h}}=\int_{Y} \left(\Delta_{\h} (|L|^2f)+|L|^2f\right) dv_{\tilde{h}},
\end{equation*}
where the right side is a convergent integral.

\end{lemma}
\begin{proof}
By \eqref{LL} we know 
\begin{equation}
|L|_{\h}^2f=|\mathring{\overline{II}}_M|^2_{\overline{k}}\tilde{f}r+O(r^3\log(r)) 
\end{equation}
which implies 
 \begin{align*}
 \nabla_{\nu_{\varepsilon}} |L|_{\h}^2f&=
 \nabla_{\nu_{\varepsilon}}\Big[|\mathring{\overline{II}}_M|^2_{\overline{k}}\tilde{f}r+O(r^3\log(r)) \Big]\\&=|\mathring{\overline{II}}_M|^2_{\overline{k}}\tilde{f}\varepsilon+O(\varepsilon^3\log(\varepsilon)).\nonumber
 \end{align*}
 It follows that 
 \begin{equation}
 \oint_{\Sigma_{\epsilon}}\nabla_{\nu_{\varepsilon}}(|L|^2_{\tilde{h}}f)  dv_{k_\varepsilon}=\varepsilon^{-1}\oint_{\Sigma}|\mathring{\overline{II}}_M|^2_{\overline{k}}\tilde{f} dv_{\overline{k}}+O(\varepsilon\log(\varepsilon)),
 \end{equation}
 where we have used that fact that $\sqrt{\mathrm{det}\hspace{1mm} \overline{k}_{\varepsilon}}$ has vanishing first derivative at $r=0.$
 By Stokes's theorem
 \begin{equation}
 \int_{Y_{\varepsilon}}\Delta^Y(|L|^2_{\tilde{h}}f) dv_{\tilde{h}}=-\oint_{\Sigma_{\epsilon}}\nabla_{\nu_{\varepsilon}}(|L|^2_{\tilde{h}}f)  dv_{\overline{k}_\varepsilon}.
 \end{equation}
 Also recall \eqref{LLL}:
 \begin{align*}
 \int_{Y_{\varepsilon}}|L|^2_{\tilde{h}}f dv_{\tilde{h}}&=C+\oint_{\Sigma} \int_{\varepsilon}^{r_0}|\mathring{\overline{II}}_M|^2_{\overline{k}}\tilde{f}r^{-2}+O(1)  drdv_{\overline{k}} \\\nonumber
 &= f.p.\int_{Y_{\varepsilon}}|L|^2fdv_{\h} + \varepsilon^{-1}\oint_{\Sigma}|\mathring{\overline{II}}_M|^2_{\overline{k}}\tilde{f}dv_{\overline{k}}+O(\varepsilon).
 \end{align*}
 The result now follows.
\end{proof}
\section{Appendix} \label{Appendix} 

In this appendix we give a brief summary of the formulas needed in the proof of Theorem \ref{varthm}, based on notes provided by Nicholas Edelen.  Although they are all standard, due to differences in notation and convention we have decided to present a summary of the calculations.   

Let $(X,g)$ be a Riemannian manifold of dimension $n+1$, and $\nabla$ denote the Riemannian connection.  Let $Y$ be a smooth manifold of dimension $n$, and consider a one-parameter family of smooth immersions $\mathcal{F} : (-\epsilon, \epsilon) \times Y \rightarrow X$.  Let $h = (\mathcal{F}_t)^{*}g$ be the induced metric on $Y$, and $\nabla^Y$ the corresponding connection.  

Let $V$ denote the variation field of $\mathcal{F}_t$: 
\begin{align*}
V = \left.\frac{d}{dt} \mathcal{F}_t \right|_{t = 0}.
\end{align*}
Eventually we will assume that $\mathcal{F}_t$ is a normal variation; i.e., if $\nu$ is a choice of unit to $Y$ then there is a function $f \in C^{\infty}(Y)$ such that $V = f \nu$.  

Let $\{ x^1, \dots, x^n \}$ be local coordinates near a point $0 \in Y$.  They induce coordinates on $\mathcal{F}_t(Y)$ defined via $(t,x^1,\dots,x^n) \mapsto \mathcal{F}_t(x^1,\dots,x^n)$, and we have the corresponding coordinate vector fields $\{ \partial_1, \dots, \partial_n \}$, along with $\partial_t = V$.  Let
\begin{align*}
h_{\alpha \beta}(t,x) = g_{\mathcal{F}_t(Y)}(\partial_{\alpha}, \partial_{\beta}).
\end{align*}
Then
\begin{align*}
h'_{\alpha \beta} &= \left.\frac{\partial}{\partial t} h_{\alpha \beta} \right|_{t = 0} \\
&= g(\nabla_{\partial_t} \partial_{\alpha},  \partial_{\beta} ) + g( \partial_{\alpha}, \nabla_{\partial_t} \partial_{\beta}) \\
&= g(\nabla_{\partial_{\alpha}} V, \partial_{\beta}) + g( \partial_{\alpha}, \nabla_{\partial_{\beta}} V).
\end{align*}
If $V = f \nu$, then this becomes 
\begin{align} \label{hdot1}
h'_{\alpha \beta} = f g(\nabla_{\partial_{\alpha}} \nu, \partial_{\beta}) + g( \partial_{\alpha}, \nabla_{\partial_{\beta}} \nu).
\end{align}
Given a choice of normal $\nu$ our definition of the second fundamental form of $Y$ is
\begin{align} 
L(\partial_{\alpha}, \partial_{\beta}) = g(\nu, \nabla_{\partial_{\alpha}} \partial_{\beta}) = - g(\nabla_{\partial_{\alpha}} \nu, \partial_{\beta}).
\end{align}
Therefore, by (\ref{hdot1}) we conclude 
\begin{align} \label{hdot}
h'_{\alpha \beta} = - 2 f L_{\alpha \beta}.
\end{align}
By the standard formula for the inverse, this implies 
\begin{align} \label{hinvdot}
	(h^{\alpha \beta})' = 2 f L^{\alpha}{}_{\gamma} L^{\beta \gamma}.
\end{align}

By our definition of second fundamental form, 
\begin{align} \label{L1} \begin{split}
L'_{\alpha \beta} &= \left.\frac{\partial}{\partial t} L_{\alpha \beta} \right|_{t = 0} \\
&= g(\nabla_{\partial_t} \nu, \nabla_{\partial_{\alpha}} \partial_{\beta} ) + g( \nu, \nabla_{\partial_t} \nabla_{\partial_{\alpha}} \partial_{\beta} ).
\end{split}
\end{align}
The first term on the right is easily seen to vanish, since $0 = \partial_t g(\nu,\nu) = 2 g( \nabla_{\partial_t} \nu, \nu )$ implies that 
\begin{align}  \label{p1}
g(\nabla_{\partial_t} \nu, \nabla_{\partial_{\alpha}} \partial_{\beta} ) = - L_{\alpha \beta} \, g( \nabla_{\partial_t} \nu, \nu ) = 0.
\end{align}
For the second term, we commute derivatives to get 
\begin{align}  \label{p2} \begin{split}
g( \nu, \nabla_{\partial_t} \nabla_{\partial_{\alpha}} \partial_{\beta} ) &= g( \nu, \nabla_{\partial_{\alpha}} \nabla_{\partial_t}  \partial_{\beta} ) + R(V, \partial_{\alpha}, \partial_{\beta}, \nu) \\
&= g( \nu, \nabla_{\partial_{\alpha}} \nabla_{\partial_{\beta}} V ) + R(V, \partial_{\alpha}, \partial_{\beta}, \nu),
\end{split}
\end{align}
where $R$ is the curvature tensor of $g$.  If $V = f \nu$ then by (\ref{p1}) and (\ref{p2}), (\ref{L1}) simplifies to 
\begin{align} \label{plug} \begin{split}
L'_{\alpha \beta} &= g( \nu, \nabla_{\partial_t} \nabla_{\partial_{\alpha}} \partial_{\beta} ) \\
&= g( \nu, \nabla_{\partial_{\alpha}} \nabla_{\partial_{\beta}} (f \nu) ) + f R(\nu, \partial_{\alpha}, \partial_{\beta}, \nu) \\
&= \nabla^Y_{\alpha} \nabla_{\beta}^Y f  + g(\nu, \partial_{\alpha} f \nabla_{\partial_{\beta}} \nu +  \partial_{\beta} f \nabla_{\partial_{\alpha}} \nu + f \nabla_{\partial_{\alpha}} \nabla_{\partial_{\beta}} \nu ) + f R(\nu, \partial_{\alpha}, \partial_{\beta}, \nu) \\
&= \nabla_{\alpha}^Y \nabla_{\beta}^Y f + f g(\nu, \nabla_{\partial_{\alpha}} \nabla_{\partial_{\beta}} \nu ) + f R(\nu, \partial_{\alpha}, \partial_{\beta}, \nu),
\end{split}
\end{align}
where in the last line we used the fact that $\partial_{\alpha} g(\nu,\nu) = 0$.  Using this fact again we also find
\begin{align} \label{plug2}
g( \nu, \nabla_{\partial_{\alpha}} \nabla_{\partial_{\beta}} \nu ) &= - g( \nabla_{\partial_{\alpha}} \nu, \nabla_{\partial_{\beta}} \nu).
\end{align}
It follows from the definition of the second fundamental form that 
\begin{align*}
\nabla_{\partial_{\alpha}}\nu = - L_{\alpha}^{\gamma} \, \partial_{\gamma},
\end{align*}
hence
\begin{align*}
- g( \nabla_{\partial_{\alpha}} \nu, \nabla_{\partial_{\beta}} \nu) = - L_{\alpha}^{\gamma} L_{\beta \gamma}.
\end{align*}
Substituting this into (\ref{plug2}) and combining with (\ref{plug}), we arrive at  
\begin{align} \label{Ldot}  
L'_{\alpha \beta} = \nabla_{\alpha}^Y \nabla_{\beta}^Y f  - f \, L_{\alpha}^{\gamma} L_{\beta \gamma} + f \, R(\nu, \partial_{\alpha}, \partial_{\beta}, \nu).
\end{align}

For the variation of the mean curvature $H = h^{\alpha \beta} L_{\alpha \beta}$ we use (\ref{hinvdot}) and (\ref{Ldot}) to obtain 
\begin{align} \label{Hdot}
H' = \Delta_Y f + \big( |L|^2 + \ric(\nu,\nu) \big)f. 
\end{align}
Finally, using the standard formula for the derivative of the volume form, we have 
\begin{align} \label{dVdot}
(dv_h)' = - f H \, dv_h. 
\end{align}

\bibliographystyle{alpha}
\bibliography{gbrv}

\newcommand{\noop}[1]{}
\begin{thebibliography}{JKKT20}

\bibitem[ABN18]{abn18}
R.~Auzzi, S.~Baiguera, and G.~Nardelli.
\newblock Volume and complexity for warped {A}d{S} black holes.
\newblock {\em Journal of High Energy Physics}, 2018(63), 2018.

\bibitem[And01]{and01}
M.~T. Anderson.
\newblock ${L}^2$ curvature and volume renormalization of the {AHE} metrics on
  4-manifolds.
\newblock {\em Math. Res. Lett.}, 8:171--188, 2001.

\bibitem[AW43]{aw43}
C.~B. Allendoerfer and A.~Weil.
\newblock The {G}auss-{B}onnet theorem for {R}iemannian polyhedra.
\newblock {\em Transactions of the AMS}, 53(1):101--129, 1943.

\bibitem[BAC16]{bc16}
O.~Ben-{A}mi and D.~Carmi.
\newblock On volumes of subregions in holography and complexity.
\newblock {\em Journal of High Energy Physics}, 2016(129), 2016.

\bibitem[CDLS05]{cdls05}
P.~T. Chru\'{s}ciel, E.~Delay, J.~M. Lee, and D.~N. Skinner.
\newblock Boundary regularity of conformally compact {E}instein metrics.
\newblock {\em J. Differential Geom.}, 69:111--136, 2005.

\bibitem[CFN17]{cfn17}
J.~Couch, W.~Fischler, and P.~H. Nguyen.
\newblock Noether charge, black hole volume, and complexity.
\newblock {\em Journal of High Energy Physics}, 2017(119), 2017.

\bibitem[Che45]{c45}
S.-S. Chern.
\newblock On the curvatura integra in a {R}iemannian manifold.
\newblock {\em Annals of Mathematics}, 46(4):674--684, 1945.

\bibitem[CQ97]{cq97i}
S.-Y.~A. Chang and J.~Qing.
\newblock The zeta functional determinants on manifolds with boundary {I}--the
  formula.
\newblock {\em Journal of Functional Analysis}, 147:327--362, 1997.

\bibitem[CQY08]{cqy08}
S.-Y.~A. Chang, J.~Qing, and P.~Yang.
\newblock Renormalized volumes for conformally compact {E}instein manifolds.
\newblock {\em J. Math. Sci. (N.Y.)}, 149(6):1755--1769, 2008.

\bibitem[FG85]{fg85}
C.~Fefferman and C.~R. Graham.
\newblock Conformal invariants.
\newblock In {\em The Mathematical Heritage of \'{E}lie Cartan (Lyon, 1984)},
  Ast\'{e}risque, pages 95--116, 1985.

\bibitem[FG12]{fg12}
C.~Fefferman and C.~R. Graham.
\newblock {\em The Ambient Metric}.
\newblock Number 178 in Annals of Mathematics Studies. Princeton University
  Press, Princeton, 2012.

\bibitem[GL91]{gl91}
C.~R. Graham and J.~M. Lee.
\newblock Einstein metrics with prescribed conformal infinity on the ball.
\newblock {\em Adv. Math.}, 87:186 -- 225, 1991.

\bibitem[Gra00]{g00}
C.~R. Graham.
\newblock Volume and area renormalizations for conformally compact {E}instein
  metrics.
\newblock {\em Suppl. Rendiconti Circolo Mat. Palermo}, 63:31--42, 2000.

\bibitem[Gra17]{g17}
C.~R. Graham.
\newblock Volume renormalization for singular {Y}amabe metrics.
\newblock {\em Proceedings of the AMS}, 145(4):1781--1792, 2017.

\bibitem[GW99]{gw99}
C.~R. Graham and E.~Witten.
\newblock Conformal anomaly of submanifold observables in {A}d{S}/{C}{F}{T}
  correspondence.
\newblock {\em Nuclear Phys. B}, 546(1-2):52--64, 1999.

\bibitem[GW19]{gw19}
A.~R. Gover and A.~Waldron.
\newblock Renormalized volumes with boundary.
\newblock {\em Communications in Contemporary Mathematics}, 21(2):1850030,
  2019.

\bibitem[HS98]{hs98}
M.~Hennington and K.~Skenderis.
\newblock The holographic {W}eyl anomaly.
\newblock {\em Journal of High Energy Physics}, 7:23, 1998.

\bibitem[JKKT20]{jkkt20}
D.~Jang, Y.~Kim, O.-K. Kwon, and D.~D. Tolla.
\newblock Renormalized holographic subregion complexity under relevant
  perturbations.
\newblock {\em Journal of High Energy Physics}, 2020(137), 2020.

\bibitem[Lee06]{lee06}
J.~M. Lee.
\newblock Fredholm operators and {E}instein metrics on conformally compact
  manifolds.
\newblock {\em Mem. Amer. Math. Soc}, 183(864):vi+83, 2006.

\bibitem[Lee13]{lee13}
J.~M. Lee.
\newblock {\em Introduction to Smooth Manifolds}.
\newblock Springer, New York, 2nd edition, 2013.

\bibitem[McK20]{m20}
S.~E. McKeown.
\newblock Extrinsic curvature and conformal {G}auss-{B}onnet for four-manifolds
  with corner.
\newblock {\em Submitted}, pages 1--10, 2020.

\bibitem[Sus16]{s16}
L.~Susskind.
\newblock Computational complexity and black hole horizons.
\newblock {\em Fortsch. Phys.}, 64(24), 2016.

\bibitem[Top06]{top06}
P.~Topping.
\newblock {\em Lectures on the {R}icci flow}.
\newblock Number 325 in Lecture Note Series. Cambridge University Press,
  Cambridge, England, 2006.

\end{thebibliography}
\end{document}